\newtheorem{theorem}{Theorem}[section]
\newtheorem{lemma}[theorem]{Lemma}
\newtheorem{notation}[theorem]{Notation}
\newtheorem{proposition}[theorem]{Proposition}
\newtheorem{corollary}[theorem]{Corollary}
\def\blacksquare{
\thinspace\nobreak \vrule width 5pt height 5pt depth 0pt}
\newtheorem{remark}[theorem]{Remark}
\newenvironment{proof}{\begin{trivlist}
                       \item[]\hspace{0cm}{\bf Proof: }
                       \hspace{0cm} }{\hfill $\blacksquare$
                     \end{trivlist}}
\def\dr{\partial}
\def\dx{\, {\rm d}}
\def\la{\lambda}
\def\eps{\varepsilon}
\def\R{\mathbb R}
\def\N{\mathbb N}
\def\sL{{\rm L}}
\def\sH{{\rm H}}
\def\A{{\textbf{\textsf{A}}}}
\definecolor{gr}{rgb}   {0.,   0.69,   0.23 }
\definecolor{bl}{rgb}   {0.,   0.5,   1. }
\definecolor{mg}{rgb}   {0.85,  0.,    0.85}
\definecolor{yl}{rgb}   {0.8,  0.7,   0.}
\newcommand{\red}{\mathsf{red}}
\newcommand{\dis}{\mathsf{dis}}
\newcommand{\dist}{\mathsf{dist}}
\newcommand{\Dom}{\mathsf{Dom}}
\newcommand{\model}{\mathsf{model}}
\definecolor{webred}{rgb}{0.75,0,0}
\definecolor{webgreen}{rgb}{0,0.75,0}
\def\Ca{\mathcal{C}_{\alpha}} 
\def\Pc{\mathcal{P}} 
\def\a{\tfrac\alpha2}
\def\dhmu{\dx\tilde\mu}
\def\LL{{\mathfrak L}_{\alpha,\beta}}
\def\Lla#1{\lambda_{#1}(\alpha,\beta)}
\def\Lpa#1{\psi_{#1}(\alpha,\beta)}
\def\L{{\mathcal L}_{\alpha,\beta}}
\def\Q{\mathcal{Q}_{\alpha,\beta}}
\def\la#1{\tilde\lambda_{#1}(\alpha,\beta)}
\def\pa#1{\tilde\psi_{#1}(\alpha,\beta)}
\def\H{{\mathfrak H}} 
\def\lh#1{\mathfrak l_{#1}} 
\def\fh#1{\mathfrak f_{#1}} 
\def\Opb{\mathcal{P}_{B}}
\def\T{{\ ^{\sf T}}}
\def\span{{\sf{span}}}
\newcommand{\re}{{\rm e}}
\title{Magnetic Neumann Laplacian on a sharp cone}
\author{V. Bonnaillie-No\"el\footnote{IRMAR, ENS Rennes, Univ. Rennes 1, CNRS, UEB, av. Robert Schuman, F-35170 Bruz, France
\texttt{bonnaillie@math.cnrs.fr}} and N. Raymond\footnote{IRMAR, Univ. Rennes 1, CNRS, Campus de Beaulieu, F-35042 Rennes cedex, France
\texttt{nicolas.raymond@univ-rennes1.fr}}}
\begin{document}

\maketitle
\begin{abstract}
This paper is devoted to the spectral analysis of the Laplacian with constant magnetic field on a cone of aperture $\alpha$ and Neumann boundary condition. We analyze the influence of the orientation of the magnetic field. In particular, for any orientation of the magnetic field, we prove the existence of discrete spectrum below the essential spectrum in the limit $\alpha\to 0$ and establish a full asymptotic expansion for the $n$-th eigenvalue and the $n$-th eigenfunction.
\end{abstract}

\paragraph{Keywords.} Magnetic Laplacian, singular 3D domain, spectral asymptotics.
\paragraph{MSC classification.} 35P15, 35J10, 81Q10, 81Q15.

\section{Introduction}

\subsection{Definition of the main operator}
The right circular cone $\Ca$ of angular opening $\alpha\in\left(0,\pi\right)$ (see Figure~\ref{fig.cone}) is defined  in the cartesian coordinates $(x,y,z)$ by
$$\Ca=\{(x,y,z)\in\R^3,\ z>0,\ x^2+y^2< z^2\tan^2\a\}.$$
\begin{figure}[h!tb]
\begin{center}
\includegraphics[height=6cm]{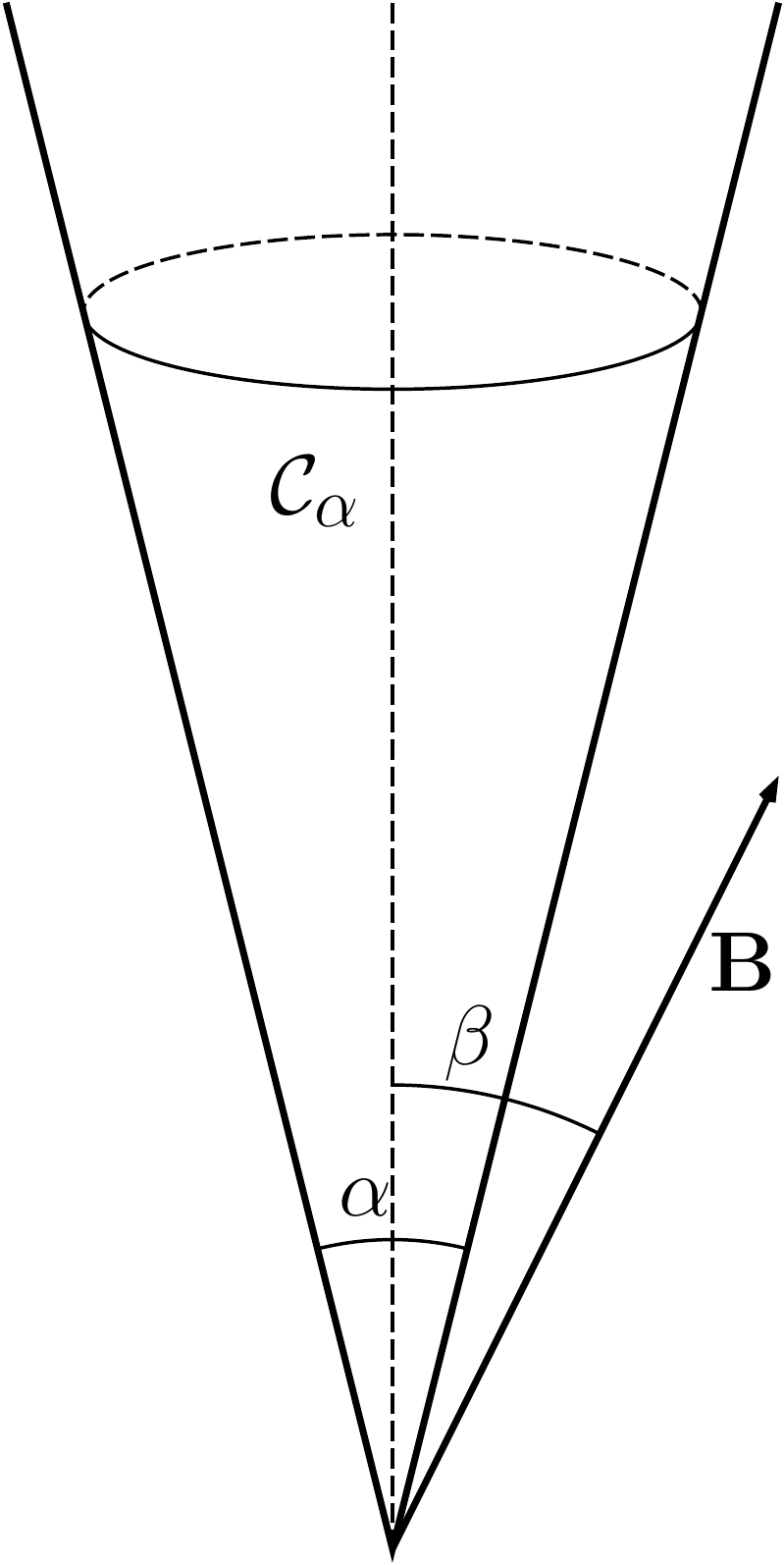}
\caption{Geometric setting.\label{fig.cone}}
\end{center}
\end{figure}
We consider ${\bf B}$ the constant magnetic field which makes an angle $\beta\in\left[0,\frac{\pi}{2}\right]$ with the axis of the cone:
$${\bf B}(x,y,z)=(0,\sin\beta,\cos\beta)^{\sf T},$$
We choose the following magnetic potential $\A$:
$$\A(x,y,z) =\frac{1}{2}{\bf{B}}\times {\bf{x}}=\frac 12(z\sin\beta-y\cos\beta,x\cos\beta,-x\sin\beta)^{\sf T}.$$
We consider $\mathfrak{L}_{\A}=\mathfrak{L}_{\alpha,\beta}$ the Friedrichs extension associated with the quadratic form 
$$\mathfrak{Q}_{\A}(\psi)=\|(-i\nabla+\A)\psi\|^2_{\sL^2(\Ca)},$$
defined for $\psi\in\sH^1_{\A}(\Ca)$ with
$$\sH^1_{\A}(\Ca)=\{u\in\sL^2(\Ca), (-i\nabla+\A)u\in\sL^2(\Ca)\}.$$
The operator $\mathfrak{L}_{\A}$ is $(-i\nabla+\A)^2$ with domain:
$$\sH^2_{\A}(\Ca)=\{u\in\sH^1_{\A}(\Ca), (-i\nabla+\A)^2 u\in\sL^2(\Ca), (-i\nabla+\A)u\cdot\nu=0\, \mbox{ on } \dr\Ca\}.$$
We define the $n$-th eigenvalue $\Lla{n}$ of $\mathfrak{L}_{\A}$ by using Rayleigh quotients: 
\begin{equation}\label{eq.Rayleigh}
\Lla{n}=
   \sup_{\Psi_{1},\ldots,\Psi_{n-1}\in \sH^1_{\A}(\Ca)} \ 
   \inf_{\substack{\Psi\in[\Psi_{1},\ldots,\Psi_{n-1}]^{\bot}\\ \Psi\in \sH^1_{\A}(\Ca),\ \|\Psi\|_{\sL^2(\Ca)}=1 }}  \mathfrak{Q}_{\A}(\Psi) 
=\inf_{\Psi_{1},\ldots,\Psi_{n}\in \sH^1_{\A}(\Ca)}\ \sup_{\substack{\Psi\in[\Psi_{1},\ldots,\Psi_{n}] \\ \|\Psi\|_{\sL^2(\Ca)}=1 }}\mathfrak{Q}_{\A}(\Psi).
\end{equation}
Let $\Lpa{n}$ be a normalized associated eigenvector (if it exists).


\subsection{Expression in spherical coordinates}
The spherical coordinates are naturally adapted to the geometry and we consider the change of variable:
$$\Phi(t,\theta,\varphi):=(x,y,z)=\alpha^{-1/2}(t\cos\theta\sin\alpha\varphi,\ t\sin\theta\sin\alpha\varphi,\ t\cos\alpha\varphi).$$
We denote by $\Pc$ the semi-infinite rectangular parallelepiped
$$\Pc:=\{(t,\theta,\varphi)\in\R^3,\ t>0,\ \theta\in [0,2\pi),\ \varphi\in(0,\tfrac12)\}.$$
Let $\psi\in\sH^1_{\A}(\Ca)$. We write $\psi(\Phi(t,\theta,\varphi))= \alpha^{1/4}\underline\psi(t,\theta,\varphi)$ for any $(t,\theta,\varphi)\in\Pc$ and, using Appendix \ref{A}
and the change of gauge
$$\underline\psi(t,\theta,\varphi)=\exp\left(-i\frac{t^2\varphi}{2}\cos\theta\ \sin\beta\right) \tilde\psi(t,\theta,\varphi),$$
we have
$$\|\psi\|^2_{\sL^2(\Ca)}=
\int_{\Pc}|\tilde\psi(t,\theta,\varphi)|^2\, t^2 \sin\alpha\varphi \dx t \dx\theta \dx\varphi,$$
and:
$$ \mathfrak{Q}_{\A}(\psi)  =\alpha \Q(\tilde\psi),$$
where the quadratic form $\Q$ is defined on the form domain $\sH^1_{\tilde\A}(\Pc)$ by
\begin{equation}\label{eq.Qalpha3D}
\Q(\psi):=\int_{\Pc} \left(|P_{1}\psi|^2+|P_{2}\psi|^2+|P_{3}\psi|^2\right)\dhmu,
\end{equation}
with
\begin{eqnarray*}
P_{1}&=&D_{t}-t\varphi\cos\theta\ \sin\beta,\\
P_{2}&=&{\frac{1}{t\sin\alpha\varphi}\left(D_{\theta}+\frac{t^2\sin^2\alpha\varphi\ \cos\beta}{2\alpha}
+\frac{t^2\varphi\sin\theta\ \sin\beta}2\left(1-\frac{\sin2\alpha\varphi}{2\alpha\varphi}\right)\right)},\\
P_{3}&=&{\frac{1}{\alpha t}D_{\varphi}}.
\end{eqnarray*}
The measure is given by
$$\dhmu=  t^2\sin\alpha\varphi \dx t \dx\theta \dx\varphi,$$
and the form domain by
$$\sH^1_{\tilde\A}(\Pc) =\{\psi\in \sL^2(\Pc,\dhmu), (-i\nabla+\tilde\A)\psi\in\sL^2(\Pc,\dhmu)\}.$$
We consider $\L$ the Friedrichs extension associated with the quadratic form $\Q$:
\begin{eqnarray}\label{Lc}
\L
&=& t^{-2}(D_{t}-t\varphi\cos\theta\sin\beta)t^2(D_{t}-t\varphi\cos\theta\sin\beta)\\
\nonumber &&+\frac{1}{t^2\sin^2(\alpha\varphi)}\left(D_{\theta}+\frac{t^2}{2\alpha}\sin^2(\alpha\varphi)\cos\beta
+\frac{t^2\varphi}{2}\left(1-\frac{\sin(2\alpha\varphi)}{2\alpha\varphi}\right)\sin\beta\sin\theta\right)^2\\
\nonumber &&+\frac{1}{\alpha^2 t^2\sin(\alpha\varphi)}D_{\varphi}\sin(\alpha\varphi) D_{\varphi}.
\end{eqnarray}
We define $\la{n}$ the $n$-th eigenvalue of $\L$ by using the Rayleigh quotients as in \eqref{eq.Rayleigh} and $\pa{n}$ a normalized associated eigenvector if it exists. We have
$$\Lla{n} = \alpha\la{n}.$$

\subsection{Motivation and main result}
This paper is mainly motivated by the theory of superconductivity and the analysis of the Ginzburg-Landau functional. An important result by Giorgi and Philipps (see \cite{GP99}) states that superconductivity disappears when a strong enough exterior magnetic field is applied. This critical intensity above which the superconductor only exists in its \enquote{normal state} is called $H_{C_{3}}$ and is directly related to the lowest eigenvalue of the Neumann realization of the magnetic Laplacian (see \cite{LuPan00a, BonFou07, FouHel10}). In dimension two it has been proved (thanks to semiclassical technics) by Helffer and Morame in \cite{HelMo01} that superconductivity persists longer near the points of the boundary where the curvature is maximal. This fundamental result motivates the investigation of two dimensional domains with corners (see \cite{Jad01, Pan02, Bon05, BD06}). For instance it is proved in \cite{Bon05} that the Neumann Laplacian (with magnetic field of intensity $1$) on the sector with angle $\alpha$ admits a bound state as soon as $\alpha$ is small enough. It is even proved that the first eigenvalues can be approximated by asymptotic series in powers of $\alpha$ the main term being $\alpha/\sqrt 3$ for the first one. In the case of a wedge with aperture $\alpha$ and a magnetic field in the bisector plane of the wedge, Popoff \cite{Popoff} establishes a similar asymptotic expansion for the first eigenvalues and get the same main term for the first eigenvalue (see also \cite{Po13}). In the case of the circular cone $\Ca$ with a magnetic field parallel to the axis ($\beta=0$), it is proved in \cite{BR12} that the lowest eigenvalues always exist as soon as $\alpha$ is small enough and that they admit expansions in the form:
$$\lambda_{n}(\alpha,0)\underset{\alpha\to 0}{\sim}\alpha\sum_{j\geq 0}  \gamma_{j,n} \alpha^{j},\qquad
\mbox{ with }\quad \gamma_{0,n}= \frac{4n-1}{2^{5/2}}.$$
The present paper aims at investigating the influence of the direction of the magnetic field on the spectrum and to answer for instance the following question (in the regime $\alpha\to 0$):
\begin{center}
\enquote{Which is the orientation of the magnetic field which minimizes the first eigenvalues ?}
\end{center}
Before stating our main result concerning the discrete spectrum of $\LL$ let us give a rough estimate (which is sufficient for our purpose) of the infimum of the essential spectrum. Using the Persson's lemma \cite{Persson60}, the bottom of the essential spectrum is given by the behavior at infinity of the operator. In our case, this behavior is described by a Schr\"odinger operator on $\R^3_{+}$ with a constant magnetic. Consequently, with a proof similar to the one of \cite[Proposition 1.2]{BR12}, we have
\begin{proposition}\label{essential}
For all $\alpha\in(0,\pi)$ and $\beta\in\left[0,\frac{\pi}{2}\right]$, we have:
$$s_{\alpha,\beta}:=\inf\sigma_{\rm ess}(\LL)\geq \inf_{\theta\in[0,\frac\pi2]}\sigma(\theta)>0,$$
where $\theta\mapsto\sigma(\theta)$ is the bottom of the spectrum of the Neumann-Schr\"odinger operator on $\R^3_{+}$ with a constant magnetic field that makes an angle $\theta$ with the boundary (see \cite{LuPan00a,BDPR11}).
\end{proposition}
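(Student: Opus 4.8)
The plan is to reduce the statement to a local analysis at infinity by means of Persson's lemma \cite{Persson60}, which identifies $s_{\alpha,\beta}=\inf\sigma_{\rm ess}(\LL)$ with the bottom of the form $\mathfrak{Q}_{\A}$ restricted to functions living far from the tip:
$$s_{\alpha,\beta}=\lim_{R\to+\infty}\ \inf\Big\{\mathfrak{Q}_{\A}(\psi)\ :\ \psi\in\sH^1_{\A}(\Ca),\ \|\psi\|_{\sL^2(\Ca)}=1,\ \mathrm{supp}\,\psi\subset\{|{\bf x}|>R\}\Big\}.$$
It then suffices to bound from below the right-hand side, and the whole point is that, far from the tip, $\Ca$ is locally indistinguishable—after rescaling and up to small errors—from a half-space $\R^3_{+}$ (in its interior, from $\R^3$), for which the bottom of the magnetic Neumann Schr\"odinger operator is exactly $\sigma(\theta)$ (resp.\ $|{\bf B}|=1$), $\theta$ being the angle between ${\bf B}$ and the boundary plane. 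Since $|{\bf B}|=1\ge\sigma(\theta)$ for all $\theta\in[0,\pi/2]$, the interior contribution will never lower the infimum.

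Concretely, I would fix $\psi$ supported in $\{|{\bf x}|>R\}$ with $\|\psi\|=1$, pick a length scale $\rho=\rho(R)$ with $\rho\to+\infty$ and $\rho/R\to0$ (for instance $\rho=\sqrt R$), and choose a quadratic partition of unity $(\chi_{j})_{j}$ subordinate to a covering of $\{|{\bf x}|>R\}$ by balls $B({\bf x}_{j},\rho)$, with $\sum_{j}\chi_{j}^2=1$ and $\sum_{j}|\nabla\chi_{j}|^2\le C\rho^{-2}$. The IMS localization formula gives $\mathfrak{Q}_{\A}(\psi)\ge\sum_{j}\mathfrak{Q}_{\A}(\chi_{j}\psi)-C\rho^{-2}$. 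For a ball $B({\bf x}_{j},\rho)$ not meeting $\partial\Ca$, the field being the constant unit field ${\bf B}$ one gets directly $\mathfrak{Q}_{\A}(\chi_{j}\psi)\ge|{\bf B}|\,\|\chi_{j}\psi\|^2=\|\chi_{j}\psi\|^2$. For a ball meeting $\partial\Ca$, one uses that the principal curvatures of $\partial\Ca$ there are $O(1/R)$, so that after a gauge transformation and a rigid motion $B({\bf x}_{j},\rho)\cap\Ca$ is, up to a perturbation of the domain and of the magnetic potential of size $o(1)$ as $R\to+\infty$, the set $B({\bf x}_{j},\rho)\cap\R^3_{+}$ with ${\bf B}$ making some angle $\theta_{j}\in[0,\pi/2]$ with the boundary; extending $\chi_{j}\psi$ by zero and comparing with the half-space model then yields $\mathfrak{Q}_{\A}(\chi_{j}\psi)\ge(\sigma(\theta_{j})-\delta(R))\|\chi_{j}\psi\|^2\ge(\inf_{\theta}\sigma(\theta)-\delta(R))\|\chi_{j}\psi\|^2$ with $\delta(R)\to0$ uniformly in $j$. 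Summing over $j$ and using $\sum_{j}\|\chi_{j}\psi\|^2=1$ gives $\mathfrak{Q}_{\A}(\psi)\ge\inf_{\theta}\sigma(\theta)-\delta(R)-C\rho(R)^{-2}$, and letting $R\to+\infty$ proves $s_{\alpha,\beta}\ge\inf_{\theta\in[0,\pi/2]}\sigma(\theta)$. The strict positivity $\inf_{\theta\in[0,\pi/2]}\sigma(\theta)>0$ is then quoted from the literature: $\theta\mapsto\sigma(\theta)$ is continuous on $[0,\pi/2]$ and bounded below by the de Gennes constant $\Theta_{0}>0$ (see \cite{LuPan00a,BDPR11}).

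I expect the main obstacle to be the quantitative \enquote{flattening} step: one must control, uniformly in $j$ and with an error $\delta(R)\to0$, the cost of replacing the curved boundary piece $\partial\Ca\cap B({\bf x}_{j},\rho)$ by its tangent plane and of matching the gauge of $\A$ with the canonical gauge of the half-space model. This is precisely where the two competing requirements $\rho\to+\infty$ (so that a function supported in $B({\bf x}_{j},\rho)$ already feels the model spectrum $\sigma(\theta_{j})$) and $\rho/R\to0$ (so that $\partial\Ca$ is nearly flat on the scale $\rho$) must be balanced, exactly as in the proof of \cite[Proposition 1.2]{BR12}. Note that, since only the inequality $s_{\alpha,\beta}\ge\inf_{\theta}\sigma(\theta)$ is needed here and not an identity, no matching upper bound (construction of a Weyl sequence) is required, which keeps the argument short.
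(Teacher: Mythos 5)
Your argument—Persson's characterization of $s_{\alpha,\beta}$, an IMS partition at an intermediate scale $\rho$ with $\rho\to\infty$ and $\rho/R\to 0$, comparison of interior balls to the $\R^3$ model (giving $|{\bf B}|=1$) and of boundary balls to the half-space model $\sigma(\theta_j)$ after flattening and gauge matching, and then the known facts that $\theta\mapsto\sigma(\theta)$ is continuous, nondecreasing, and $\sigma(0)=\Theta_0>0$—is exactly the scheme the paper invokes by pointing to the proof of \cite[Proposition~1.2]{BR12}. So your proposal is correct and follows essentially the same route as the paper.
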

The main result of this paper is the following theorem.
\begin{theorem}\label{main-theo}
Let $\beta\in\left[0,\frac{\pi}{2}\right]$. For all $n\geq 1$, there exist $\alpha_{0}(n)>0$ and a sequence $(\gamma_{j,n})_{j\geq 0}$ such that, for all $\alpha\in(0,\alpha_{0}(n))$, the $n$-th eigenvalue of $\mathfrak{L}_{\alpha,\beta}$ exists and satisfies:
$$\Lla{n}\underset{\alpha\to 0}{\sim}\alpha\sum_{j\geq 0}  \gamma_{j,n} \alpha^{j},\qquad
\mbox{ with }\quad \gamma_{0,n}=\frac{4n-1}{2^{5/2}} \sqrt{1+\sin^2\beta}.$$
\end{theorem}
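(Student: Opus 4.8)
The strategy is a Born--Oppenheimer-type analysis in the variable $t$ combined with an Agmon-type localization, reducing the three-dimensional problem to a one-dimensional effective operator whose eigenvalues can be expanded in powers of $\alpha$. The key heuristic is that, for $\alpha$ small, the operator $\L$ from \eqref{Lc} concentrates near the vertex of the cone (i.e.\ near $t=0$ in a rescaled sense) and, after an appropriate rescaling $t = \alpha^{-1/2}\tau$ or $t=\alpha^{-1}\tau$ matched to make the dominant terms balance, the transverse operator in $(\theta,\varphi)$ becomes, to leading order, a two-dimensional model operator on the disk of opening $\frac12$ whose bottom eigenvalue produces the constant $\tfrac{1}{2^{5/2}}\sqrt{1+\sin^2\beta}$. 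Concretely, I would first perform the change of scale, expand each of $P_1,P_2,P_3$ in powers of $\alpha$ (using $\sin\alpha\varphi = \alpha\varphi - \tfrac{(\alpha\varphi)^3}{6}+\cdots$ and $1 - \tfrac{\sin 2\alpha\varphi}{2\alpha\varphi} = \tfrac{2(\alpha\varphi)^2}{3}+\cdots$), and identify the principal operator: it should be a harmonic-oscillator-type operator in $t$ (coming from $P_1$ and the $\cos\beta$ term of $P_2$) tensored with the transverse model. The factor $\sqrt{1+\sin^2\beta}$ must emerge from completing the square in the combined $P_1$, $P_2$ contributions, where the $\sin\beta$ terms contribute an extra positive definite piece to the effective "magnetic" strength.

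Next, following the scheme of \cite{BR12} (and the general philosophy of \cite{HelMo01}-type semiclassical analysis), I would construct quasimodes to all orders: starting from the ground state of the model operator, I would build a formal series $\tilde\psi \sim \sum_j \alpha^{j/2}\psi_j$ and $\lambda \sim \sum_j \gamma_{j,n}\alpha^j$, solving the hierarchy of equations $(\L_0 - \gamma_{0,n})\psi_0 = 0$, $(\L_0-\gamma_{0,n})\psi_1 = (\gamma_{1,n}-\L_1)\psi_0$, etc., at each step using the Fredholm alternative (solvability requires the right-hand side to be orthogonal to $\psi_0$, which determines $\gamma_{j,n}$). Because the model operator has compact resolvent in the transverse variables and a nondegenerate lowest eigenvalue for each $n$ (here one must be a little careful: the $n$-th eigenvalue is the $n$-th eigenvalue of the effective 1D operator, and one needs it to be simple), this construction goes through. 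Truncating the series and using the spectral theorem gives the upper bound $\Lla{n} \le \alpha\bigl(\sum_{j=0}^{N}\gamma_{j,n}\alpha^j\bigr) + O(\alpha^{N+2})$.

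The reverse inequality is the more delicate part. I would establish an Agmon estimate showing that eigenfunctions associated with eigenvalues below $s_{\alpha,\beta}$ (which is bounded below independently of $\alpha$ by Proposition \ref{essential}, while our target eigenvalues are $O(\alpha)$, hence truly in the discrete spectrum for $\alpha$ small) decay exponentially away from the vertex on the rescaled scale; this justifies the reduction to the model operator and controls the error terms uniformly. Then a comparison of Rayleigh quotients — projecting a true eigenfunction onto the span of the transverse ground state and using the spectral gap of the model operator to discard the transverse-excited part — yields a lower bound matching the quasimode expansion to the required order. Iterating the argument (a "bracketing" / induction on the order $N$) upgrades the two-term asymptotics to the full asymptotic expansion, and an analogous argument for the eigenprojector gives the expansion of the eigenfunction.

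The main obstacle I anticipate is the bookkeeping of the $\beta$-dependent terms: unlike the $\beta=0$ case treated in \cite{BR12}, the potential $P_2$ now contains the $\theta$-dependent term $\tfrac{t^2\varphi}{2}(1-\tfrac{\sin 2\alpha\varphi}{2\alpha\varphi})\sin\beta\sin\theta$, which breaks the rotational symmetry in $\theta$; one must check that, after rescaling, this term is lower-order and that the leading transverse model is still effectively one-dimensional (or at least explicitly solvable) so that the constant $\gamma_{0,n}$ can be computed in closed form. Verifying that the $\sin\beta$ contributions conspire to give exactly the factor $\sqrt{1+\sin^2\beta}$ — presumably via an exact change of variables or a clever completion of squares at the level of the model operator — is the crux of the matter, and ensuring the required simplicity/non-degeneracy of the relevant model eigenvalue (uniformly enough to run Agmon estimates) is where most of the technical care will go.
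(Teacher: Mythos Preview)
Your quasimode construction is essentially the paper's: expand $\L$ in integer powers of $\alpha$ (not $\alpha^{1/2}$; no further rescaling in $t$ is needed, the coordinates are already chosen so that eigenfunctions live at $t\sim 1$), identify $L_{-2}$ as the Neumann Laplacian on the disk in $(\theta,\varphi)$, and run the Fredholm hierarchy. One correction of mechanism: the factor $\sqrt{1+\sin^2\beta}$ does not arise from a completion of squares but from \emph{averaging} $L_0$ over the disk --- the term $t^2\varphi^2\cos^2\theta\sin^2\beta$ coming from $P_1$ averages to $\tfrac{1}{16}t^2\sin^2\beta$, while $\tfrac14 t^2\varphi^2\cos^2\beta$ from $P_2$ averages to $\tfrac{1}{32}t^2\cos^2\beta$, and the sum is $2^{-5}(1+\sin^2\beta)t^2$.

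The lower bound is where your outline has a genuine gap. You flag only the $\theta$-dependent piece $A_{\theta,2}$ in $P_2$, which is indeed $O(\alpha^2)$ and harmless. But the principal breaking of axisymmetry sits in $P_1$ via $A_t=t\varphi\cos\theta\sin\beta$, and this term is of order one --- it is exactly what generates the $\sin^2\beta$ contribution above. Since $P_1$ does not commute with the projection onto constants in $(\theta,\varphi)$, your ``project onto the transverse ground state and use the spectral gap'' step is precisely the Fourier-decomposition strategy that the paper explicitly abandons for $\beta\neq 0$: the $\theta$-Fourier modes are coupled at leading order, and the cross terms arising from $[P_1,\Pi_0]$ are not controlled by the transverse gap alone.

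The paper's substitute is to prove \emph{a posteriori} that eigenfunctions are close to their $\theta$-averages. Applying a commutator identity (Lemma~\ref{new-IMS}) with the multipliers $a=t^{k+1}\sin(\alpha\varphi)$ and $a=t\sin(\alpha\varphi)D_t$, combined with the refined Agmon estimates at scale $t\sim 1$, yields $\|t^k\psi-t^k\underline{\psi}_\theta\|+\|D_t\psi-D_t\underline{\psi}_\theta\|\leq C\alpha^{1/2}\|\psi\|$ for any eigenfunction with $\lambda\leq C_0$ (Lemmas~\ref{app-t}--\ref{app-Dt}). These estimates allow $\|P_1\psi\|^2$ to be replaced by its $\theta$-average up to $O(\alpha^{1/2})$, reducing $\Q$ on the eigenspace to an \emph{axisymmetric} electro-magnetic model (Proposition~\ref{comparison}) to which the machinery of \cite{BR12} applies directly. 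This commutator-based averaging of the eigenfunctions themselves --- rather than a projection of the operator --- is the missing ingredient in your plan.
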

\begin{remark}
We notice that the main term $\gamma_{0,n}$ in the asymptotic expansion is minimal when $\beta=0$. From the superconductivity point of view this means that superconductivity persists longer when the magnetic field is parallel to the axis of the cone (when $\alpha$ is small enough).
\end{remark}
\begin{remark}
By using the spectral theorem and the quasimodes constructed in Section 2, the corresponding eigenfunctions admit the same kind of expansions in powers of $\alpha$. Contrary to the case analyzed in \cite{BR12}, the eigenfunctions are not axisymmetric when $\beta\neq 0$. Moreover all the powers of $\alpha$ show up in the expansions. 
\end{remark}

\subsection{Strategy of the proof and organization of the paper}
Let us explain the strategy of the proof of Theorem \ref{main-theo}. The first and simplest part of the investigation aims at constructing appropriate quasimodes for $\L$. This can be done by looking for eigenpairs $(\lambda,\psi)$ in the sense of formal power series in $\alpha$ (see Section \ref{formal}). Thanks to the spectral theorem this implies the existence of \textit{some} eigenvalues possessing determined asymptotic expansions (see Proposition \ref{quasimodes}). The main problem is to prove that the formal solutions of the eigenvalue equation are exactly the expansion of the first eigenvalues. In \cite{BR12} we faced the same question, but the analysis was considerably simpler due to the axisymmetry ($\beta=0$). Indeed in the case $\beta=0$ it is possible to prove the axisymmetry of the eigenfunctions (for $\alpha$ small enough) by using a Fourier decomposition with respect to the variable $\theta$ and some rough estimates of Agmon. In fact we will improve these estimates of Agmon in Section \ref{Sec.Agmon} by proving that the length scale on which the eigenfunctions live is $t\sim 1$ or equivalently $z\sim \alpha^{1/2}$. Here the strategy of the Fourier decomposition fails and we shall do something else. If one considers the expression of $\L$ given in \eqref{Lc} we notice (in a heuristic sense) that the second term in penalized by the factor $(t^2\sin^2(\alpha\varphi))^{-1}$. Jointly with our accurate estimates of Agmon, this implies a penalization of $D_{\theta}^2$ which means that the eigenfunctions do not depend on $\theta$ at the main order (see Section \ref{averaging} and especially Lemmas \ref{app-t} and \ref{app-Dt} the proof of which rely on fine commutators computations). Once the asymptotic independence from $\theta$ is established we can replace the first term in \eqref{Lc} by its average with respect to $\theta$ (whereas the term in front of $\sin\theta$ in the second term is obviously small). Therefore the spectral analysis is reduced to an operator which does not depend on $\theta$ anymore (see Section \ref{Sec.comparison} and especially Propositions \ref{comparison} and \ref{gap1}). Finally it remains to apply the analysis of the axisymmetric case of \cite{BR12} (see Proposition \ref{gap2}).

\section{Formal series in $\alpha$}\label{formal}
The aim of the section is to prove the following proposition.
\begin{proposition}\label{quasimodes}
Let $\beta\in\left[0,\frac{\pi}{2}\right]$. For all $n\geq 1$, there exist $\alpha_{0}(n)>0$ and a sequence $(\gamma_{j,n})_{j\geq 0}$ such that, for all $\alpha\in(0,\alpha_{0}(n))$:
$$\dist\left(\alpha\sum_{j=0}^J \gamma_{j,n} \alpha^j, \sigma_{\dis}(\LL) \right)\leq C\alpha^{J+2},\mbox{ with }\quad \gamma_{0,n}=\frac{4n-1}{2^{5/2}}\sqrt{1+\sin^2\beta},$$
where $\sigma_{\dis}(\LL)$ denotes the discrete spectrum of $\LL$.
\end{proposition}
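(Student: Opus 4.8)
The plan is to construct quasimodes for $\L$ by a formal power series ansatz in $\alpha$, then invoke the spectral theorem. First I would rescale so that the natural length scale $t\sim 1$ is visible, and expand all the $\alpha$-dependent coefficients of $\L$ in \eqref{Lc} in powers of $\alpha$; since $\sin(\alpha\varphi)=\alpha\varphi-\tfrac{(\alpha\varphi)^3}{6}+\cdots$ and $\tfrac{\sin(2\alpha\varphi)}{2\alpha\varphi}=1-\tfrac{(2\alpha\varphi)^2}{6}+\cdots$, one gets $\L=\alpha^{-0}\L_0+\alpha\L_1+\alpha^2\L_2+\cdots$ after factoring, where the \emph{principal operator} $\L_0$ acts in the variables $(t,\varphi)$ only (the $\theta$-dependence first enters at a higher order through the $\sin\theta\sin\beta$ term and through the curvature corrections). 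The key point is that $\L_0$ should be, up to the scaling already performed in the spherical change of variables, the model operator whose $n$-th eigenvalue is $\gamma_{0,n}=\frac{4n-1}{2^{5/2}}\sqrt{1+\sin^2\beta}$: this is where the factor $\sqrt{1+\sin^2\beta}$ must come out, presumably from completing the square in the $D_\varphi$ and $D_t$ terms and recognizing a (shifted) harmonic-oscillator–type operator in one effective variable, exactly as in \cite{BR12} for $\beta=0$ but now with the extra $\sin^2\beta$ contribution from the second line of \eqref{Lc}.

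Next I would set up the Born–Oppenheimer / formal-series machinery: look for $(\lambda,\psi)\sim(\sum_{j\ge0}\gamma_{j,n}\alpha^j,\ \sum_{j\ge0}\psi_j\alpha^j)$ and solve the cascade of equations $(\L_0-\gamma_{0,n})\psi_0=0$, $(\L_0-\gamma_{0,n})\psi_1=(\gamma_{1,n}-\L_1)\psi_0$, and in general $(\L_0-\gamma_{0,n})\psi_k = \sum_{i+j=k,\,j\le k-1}(\gamma_{i,n}-\L_i)\psi_j$ with the convention that $\L_0$ carries no $\gamma$. At each step the Fredholm alternative for $\L_0-\gamma_{0,n}$ (which has one-dimensional kernel spanned by the known ground state $\psi_0$ of the $n$-th band, assumed simple — a standard fact for these 1D-reduced oscillators) forces $\gamma_{k,n}$ to be the projection of the right-hand side onto $\psi_0$, and then $\psi_k$ is determined modulo $\psi_0$; one fixes the gauge by requiring $\langle\psi_k,\psi_0\rangle=0$ for $k\ge1$. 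I would carry this out up to order $J$, obtaining $\Psi_J=\sum_{j=0}^J\psi_j\alpha^j$ and $\Lambda_J=\alpha\sum_{j=0}^J\gamma_{j,n}\alpha^j$ with $\|(\L-\alpha^{-1}\Lambda_J)\Psi_J\|\le C\alpha^{J+1}\|\Psi_J\|$ in the appropriate weighted $\sL^2(\Pc,\dhmu)$ norm, i.e. $\|(\LL-\Lambda_J)\Psi_J\|\le C\alpha^{J+2}\|\Psi_J\|$ after multiplying by $\alpha$ and using $\LL=\alpha\L$.

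Then the spectral theorem gives $\dist(\Lambda_J,\sigma(\LL))\le C\alpha^{J+2}$; to upgrade $\sigma$ to $\sigma_{\dis}$ one uses Proposition~\ref{essential}: since $s_{\alpha,\beta}\ge\inf_\theta\sigma(\theta)>0$ is bounded below independently of $\alpha$ while $\Lambda_J\sim\gamma_{0,n}\alpha\to0$, for $\alpha$ small the part of the spectrum within $C\alpha^{J+2}$ of $\Lambda_J$ lies strictly below the essential spectrum and is therefore discrete. One should also check that the quasimode $\Psi_J$ — built from Gaussian-type profiles in the rescaled variable times the smooth $\theta$- and $\varphi$-corrections — genuinely lies in the form domain $\sH^1_{\tilde\A}(\Pc)$ and decays at infinity; this is routine given the explicit structure. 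The main obstacle I anticipate is purely computational bookkeeping: correctly expanding the three terms of \eqref{Lc} (especially the middle one, with its two $\alpha$-dependent corrections and the $\cos\theta\sin\beta$ and $\sin\theta\sin\beta$ couplings coming from the original potential and the gauge change) to sufficiently high order, and verifying that the principal symbol after all reductions is the one-variable operator whose bottom band gives $\frac{4n-1}{2^{5/2}}\sqrt{1+\sin^2\beta}$ — this identification of the constant, rather than the abstract iteration, is the delicate point. Note that this proposition only asserts that \emph{some} eigenvalue is close to $\Lambda_J$; the converse (that these are exactly the first eigenvalues), which requires the Agmon estimates and the $\theta$-averaging argument sketched in the introduction, is deferred to the later sections and is not needed here.
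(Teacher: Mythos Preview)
Your overall strategy---formal power series ansatz, recursive solution, spectral theorem, then Proposition~\ref{essential} to land in the discrete spectrum---is exactly the paper's strategy, and your last paragraph is correct. But your identification of the hierarchy is wrong in a way that would prevent the computation from starting. The expansion of $\L$ does \emph{not} begin at order $\alpha^0$: since $P_3=\tfrac{1}{\alpha t}D_\varphi$ and $P_2\sim\tfrac{1}{\alpha t\varphi}D_\theta+\cdots$, the leading term is of order $\alpha^{-2}$. The paper writes $\L\sim\sum_{j\ge -2}\alpha^j L_j$ with
\[
L_{-2}=t^{-2}\bigl(\varphi^{-1}D_\varphi\varphi D_\varphi+\varphi^{-2}D_\theta^2\bigr),
\]
which is ($t^{-2}$ times) the Neumann Laplacian on the disk of radius $\tfrac12$ in polar coordinates $(\varphi,\theta)$---not an operator in $(t,\varphi)$ as you assert, and with lowest eigenvalue $0$, not $\gamma_{0,n}$. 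Its kernel consists of functions constant in $(\theta,\varphi)$, so the equation at order $\alpha^{-2}$ forces $\lambda_{-2}=0$ and $\psi_0=f_0(t)$; order $\alpha^{-1}$ gives $\lambda_{-1}=0$ likewise. Only at order $\alpha^0$ does the Fredholm alternative for $L_{-2}$ (projection onto constants with respect to the disk measure $\varphi\,\dx\varphi\,\dx\theta$) yield an effective equation in $t$ alone,
\[
\bigl(t^{-2}D_t\, t^2 D_t+2^{-5}(1+\sin^2\beta)\,t^2\bigr)f_0=\lambda_0 f_0,
\]
whose $n$-th eigenvalue is $\gamma_{0,n}$ by Corollary~\ref{cor.as1term}. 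The factor $1+\sin^2\beta$ comes not from completing a square but from \emph{averaging}: the potential $t^2\varphi^2\cos^2\theta\sin^2\beta$ (from $A_t^2$) and $\tfrac14 t^2\varphi^2\cos^2\beta$ combine, using $\langle\cos^2\theta\rangle=\tfrac12$ and $\langle\varphi^2\rangle_{\varphi\,\dx\varphi}=\tfrac18$ on $(0,\tfrac12)$, to give $2^{-5}(2\sin^2\beta+\cos^2\beta)t^2$.

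In short, the mechanism is a Born--Oppenheimer reduction with $(\theta,\varphi)$ as the \emph{fast} variables (penalized by $\alpha^{-2}$) and $t$ as the slow one---the reverse of what your write-up suggests. Your proposed cascade $(\L_0-\gamma_{0,n})\psi_k=\cdots$ with a simple kernel cannot be set up, because the true leading operator has $0$ as its bottom eigenvalue and the radial model only emerges two steps later via solvability conditions. Once you correct the hierarchy, the recursion, truncation, and spectral-theorem conclusion go through exactly as you describe.
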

\begin{proof}
We write a formal Taylor expansion in powers of $\alpha$:
$$\L\sim  \sum_{j\geq -2} \alpha^{j} L_{j},$$
where:
$$L_{-2}=t^{-2}(\varphi^{-1}D_{\varphi}\varphi D_{\varphi}+\varphi^{-2}D_{\theta}^2),$$
$$L_{-1}=\cos\beta D_{\theta},$$
$$L_{0}=t^{-2}(D_{t}-t\varphi\cos\theta\sin\beta)t^2(D_{t}-t\varphi\cos\theta\sin\beta)+\frac{t^2\varphi^2\cos^2\beta}{4}+\frac{ \varphi\sin\beta}{3}(\sin\theta D_{\theta}+D_{\theta}\sin\theta).$$
\begin{remark}
We notice that the operator $\Opb=\varphi^{-1}D_{\varphi}\varphi D_{\varphi}+\varphi^{-2}D_{\theta}^2$ defined on the space $\sL^2\left(\left(0,\frac{1}{2}\right)\times[0,2\pi),\varphi\dx \varphi\dx\theta\right)$ with Neumann condition at $\varphi={1}/{2}$ is nothing but the Neumann Laplacian on the disk of center $(0,0)$ and radius ${1}/{2}$.
\end{remark}
We look for quasi-eigenpairs in the form:
$$\lambda\sim\sum_{j\geq -2} \lambda_{j}\alpha^{j},\quad \psi\sim\sum_{j\geq 0} \alpha^j \psi_{j},$$
so that, in the sense of formal series:
$$\L\psi\sim \lambda \psi.$$
\paragraph{Term in $\alpha^{-2}$.}
We have to solve the equation:
$$L_{-2}\psi_{0}=\lambda_{-2}\psi_{0}.$$
We are led to choose $\lambda_{-2}=0$ and $\psi_{0}(t,\theta,\varphi)=f_{0}(t)$.
\paragraph{Term in $\alpha^{-1}$.}
Then, we write:
$$L_{-2}\psi_{1}=(\lambda_{-1}-L_{-1})\psi_{0}=\lambda_{-1}\psi_{0}.$$
For all fixed $t$ the Fredholm alternative gives $\lambda_{-1}=0$ and we choose $\psi_{1}(t,\theta,\varphi)=f_{1}(t)$.
\paragraph{Term in $\alpha^{0}$.}
The crucial equation is:
$$L_{-2}\psi_{2}=(\lambda_{0}-L_{0})\psi_{0}-L_{-1}\psi_{1}=(\lambda_{0}-L_{0})\psi_{0}.$$
For fixed $t$ the Fredholm alternative implies:
$$\langle (\lambda_{0}-L_{0})\psi_{0} ,1\rangle_{\sL^2(\varphi \dx\varphi\dx \theta)}=0.$$
A computation gives:
$$\left(t^{-2}D_{t}^2 t^2 D_{t}^2+2^{-5}(1+\sin^2\beta)t^2\right)f_{0}=\lambda_{0}f_{0}.$$
We can use Corollary \ref{cor.as1term} with $c=2^{-5}(1+\sin^2\beta)$ and we are led to take, for each $n\geq1$, 
$$\lambda_{0}=\frac{4n-1}{2^{5/2}}\sqrt{1+\sin^2\beta},$$
and for $f_{0}$ the corresponding attached eigenfunction.
We take $\psi_{2}$ in the form $\psi_{2}=t^2\tilde\psi_{2}^{\perp}+f_{2}(t)$ where $\tilde\psi_{2}^{\perp}$ is the unique solution of:
$$\Opb\tilde\psi_{2}=(\lambda_{0}-L_{0})\psi_{0}$$
such that $\langle\tilde\psi_{2},1\rangle_{\sL^2(\varphi \dx\varphi\dx \theta)}=0$.
\paragraph{Further terms.}
Step by step, we can determine all the coefficients of the formal series and the conclusion follows from the spectral theorem as we have done in \cite{BR12}.
\end{proof}

\section{Accurate Agmon estimates for $\beta\in\left[0,\frac{\pi}{2}\right]$}\label{Sec.Agmon}

\subsection{Basic estimates}
Before entering into the details of our asymptotic analysis we shall recall basic considerations related to {\it a priori} localization and regularity of the eigenfunctions.
As a classical consequence of the Persson's theorem (see \cite{Persson60}), we can first establish rough Agmon's estimates (see \cite{Agmon82, Agmon85}).
\begin{proposition}\label{aprioriAgmon}
Let $\alpha\in\left(0,\pi\right)$ and $\beta\in\left[0,\frac{\pi}{2}\right]$. There exist $\eps, C>0$ such that for all eigenpair $(\lambda,\psi)$ of $\LL$ satisfying $\lambda<s_{\alpha,\beta}=\inf\sigma_{\rm ess}(\LL)$ we have:
\begin{eqnarray*}
\int_{\Ca} \re^{\eps \sqrt{s_{\alpha,\beta}-\lambda}|{\bf{x}}|} |\psi({\bf{x}})|^2\dx {\bf{x}}\leq C\|\psi\|^2,\\
\Q\left(\re^{\eps \sqrt{s_{\alpha,\beta}-\lambda}|{\bf{x}}|} \psi\right)\leq C\|\psi\|^2.
\end{eqnarray*}
\end{proposition}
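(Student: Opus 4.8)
The statement to prove is Proposition~\ref{aprioriAgmon}, the rough Agmon estimates. The standard strategy here is the Agmon--Persson machinery: one combines the variational characterization of eigenvalues, an IMS-type localization formula (the ``Agmon identity''), and the strict inequality $\lambda < s_{\alpha,\beta}=\inf\sigma_{\rm ess}(\LL)$ together with a Persson-type lower bound for the quadratic form away from the origin. The weight to use is $\Phi_\varepsilon({\bf x}) = \varepsilon\sqrt{s_{\alpha,\beta}-\lambda}\,|{\bf x}|$, which is Lipschitz with $|\nabla\Phi_\varepsilon|^2 = \varepsilon^2(s_{\alpha,\beta}-\lambda)$ almost everywhere, so the gradient of the weight is uniformly controlled by the spectral gap.

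First I would record the basic Agmon identity: for a real Lipschitz function $\Phi$ with $\re^\Phi\psi\in\sH^1_{\A}(\Ca)$ and $(\lambda,\psi)$ an eigenpair,
\begin{equation*}
\mathfrak{Q}_{\A}(\re^{\Phi}\psi) - \|\,|\nabla\Phi|\,\re^{\Phi}\psi\|^2 = \lambda\,\|\re^{\Phi}\psi\|^2,
\end{equation*}
which follows from integration by parts (the Neumann boundary term vanishes since $(-i\nabla+\A)u\cdot\nu=0$ on $\dr\Ca$) and the fact that $\A$ is real. With $\Phi=\Phi_\varepsilon$ this gives
\begin{equation*}
\mathfrak{Q}_{\A}(\re^{\Phi_\varepsilon}\psi) = \big(\lambda + \varepsilon^2(s_{\alpha,\beta}-\lambda)\big)\|\re^{\Phi_\varepsilon}\psi\|^2.
\end{equation*}
Next I would split the domain into $\Ca\cap\{|{\bf x}|<R\}$ and $\Ca\cap\{|{\bf x}|\ge R\}$ for a large radius $R$ to be fixed. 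On the inner ball the weight $\re^{\Phi_\varepsilon}$ is bounded by $\re^{\varepsilon\sqrt{s_{\alpha,\beta}}R}$, so that region contributes at most a constant times $\|\psi\|^2$. On the outer region one invokes the Persson localization: since $s_{\alpha,\beta}=\inf\sigma_{\rm ess}(\LL)$, for any $\eta>0$ there is $R=R(\eta)$ such that for every $u\in\sH^1_{\A}(\Ca)$ supported in $\{|{\bf x}|\ge R\}$ one has $\mathfrak{Q}_{\A}(u)\ge (s_{\alpha,\beta}-\eta)\|u\|^2$. Applying this to a smooth cutoff of $\re^{\Phi_\varepsilon}\psi$ localized at infinity, together with an IMS partition-of-unity correction whose error is $O(R^{-2})\|\re^{\Phi_\varepsilon}\psi\|^2$, and combining with the identity above, one obtains
\begin{equation*}
(s_{\alpha,\beta}-\eta-C R^{-2})\,\|\re^{\Phi_\varepsilon}\psi\|^2_{\{|{\bf x}|\ge R\}} \le \big(\lambda+\varepsilon^2(s_{\alpha,\beta}-\lambda)\big)\|\re^{\Phi_\varepsilon}\psi\|^2 \;+\; C\re^{2\varepsilon\sqrt{s_{\alpha,\beta}}R}\|\psi\|^2.
\end{equation*}
Choosing $\eta$ small and $R$ large so that $s_{\alpha,\beta}-\eta-CR^{-2} > \tfrac{1}{2}(s_{\alpha,\beta}+\lambda)$, and then choosing $\varepsilon$ small enough that $\lambda+\varepsilon^2(s_{\alpha,\beta}-\lambda) < s_{\alpha,\beta}-\eta-CR^{-2}$, the weighted-norm term on the outer region can be absorbed into the left-hand side, leaving $\|\re^{\Phi_\varepsilon}\psi\|^2 \le C\|\psi\|^2$. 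The bound on $\mathfrak{Q}_{\A}(\re^{\Phi_\varepsilon}\psi)$ then follows immediately from the Agmon identity. Translating $|{\bf x}|$ back to $|{\bf x}|$ (no change needed) gives the stated conclusion; one should also remark that $\varepsilon$ and $C$ can be chosen uniformly because the constants depend only on $s_{\alpha,\beta}$ and on the geometry of the cone near infinity, which is a half-space with constant field.

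\textbf{Main obstacle.} The delicate point is the Persson-type lower bound $\mathfrak{Q}_{\A}(u)\ge(s_{\alpha,\beta}-\eta)\|u\|^2$ for $u$ supported far from the apex, since near infinity the cone is genuinely three-dimensional and the ``model'' operator is the Neumann magnetic Laplacian on a half-space; one must check that the error terms arising from replacing $\Ca\cap\{|{\bf x}|\ge R\}$ by this half-space model, and from the IMS localization, are indeed $o(1)$ as $R\to\infty$ uniformly. This is exactly the content alluded to in the remark preceding Proposition~\ref{essential} (``with a proof similar to the one of \cite[Proposition 1.2]{BR12}''), so I would mimic that argument; everything else is the routine Agmon bookkeeping. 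A minor technical point worth noting is the a priori justification that $\re^{\Phi_\varepsilon}\psi\in\sH^1_{\A}(\Ca)$, which is handled by first working with truncated weights $\Phi_\varepsilon\wedge N$, deriving the estimate with an $N$-independent constant, and then letting $N\to\infty$ by monotone convergence.
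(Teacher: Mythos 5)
The paper does not write out a proof of Proposition~\ref{aprioriAgmon}, presenting it as a classical consequence of Persson's theorem and Agmon's method, and your proposal supplies exactly that standard argument: Agmon identity with the gap-weighted Lipschitz weight, IMS localization, Persson lower bound away from the apex, absorption, and truncation of the weight to justify $\re^{\Phi_\varepsilon}\psi\in\sH^1_{\A}(\Ca)$. This is the intended route (the same Agmon identity reappears explicitly in the paper's later, refined Proposition~\ref{Agmon}), so the proposal is correct and matches the paper's approach.
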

\begin{remark}
As we can notice in Proposition \ref{aprioriAgmon} the constants $C$ and $\eps$ {\it a priori} depend on $\alpha$. We will improve these estimates in the next section.
\end{remark}
It is also well-known that the eigenfunctions are in $\sH^2_{loc}(\Ca)$ since $\Ca$ is Lipschitzian and convex (see for instance \cite{Kondratev67}). In fact by using the methods of \cite{D88} (see especially Chapter 6, Section 18 to determine the behavior of the singularities exponents) we can establish the following proposition (by using the elliptic estimates related to the Neumann Laplacian on $\Ca$).
\begin{proposition}
For all $k\geq 3$ there exists $\alpha_{0}>0$ such that for all $\alpha\in(0,\alpha_{0})$, any eigenfuntion belongs to $\sH_{loc}^k(\Ca)$.
\end{proposition}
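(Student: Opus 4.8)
The plan is to prove elliptic regularity of eigenfunctions for $\mathfrak{L}_{\alpha,\beta}$ near the only singular point of $\overline{\Ca}$, which is the vertex of the cone, since away from the vertex the boundary is smooth and standard interior and boundary elliptic estimates for the Neumann magnetic Laplacian give $\sH^k_{loc}$ regularity with no restriction on $\alpha$. Thus the whole issue is concentrated at the apex, and the threshold $\alpha_0$ will come from a condition on the exponents governing the conical singularity: one needs the opening to be small enough that the relevant singular exponents lie above $k$.

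First I would reduce to the non-magnetic Neumann Laplacian. The magnetic potential $\A$ is smooth (in fact polynomial) on $\overline{\Ca}$, so by a gauge transformation locally near the vertex the leading singular behavior of solutions of $(-i\nabla+\A)^2\psi=\lambda\psi$ is the same as that of $-\Delta u = g$ with Neumann boundary condition, where $g$ collects the lower-order magnetic and spectral terms; the magnetic field only contributes a bounded perturbation that does not change the asymptotic exponents. So the work is: understand the Neumann Laplacian on an infinite circular cone of aperture $\alpha$ and its Mellin/Kondrat'ev symbol. Second I would invoke the Kondrat'ev framework as developed in \cite{D88} (Chapter 6, Section 18): write the Laplacian in spherical coordinates centered at the vertex, separate the radial variable via the Mellin transform, and obtain the operator pencil $\mathcal{A}(\zeta)$ on the spherical cap $\{\varphi\in(0,\alpha)\}\subset\mathbb{S}^2$ with Neumann condition on the boundary circle $\varphi=\alpha$. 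The singular exponents are the $\zeta$ for which $\mathcal{A}(\zeta)$ is not invertible; these are determined by the eigenvalues $\mu_m(\alpha)$ of the Laplace--Beltrami operator on the cap with Neumann conditions, through the characteristic relation $\zeta(\zeta+1)=\mu_m(\alpha)$. As $\alpha\to 0$ the spherical cap shrinks and (apart from the constant mode giving $\mu_0=0$, hence $\zeta=0$ and $\zeta=-1$, corresponding to the bounded harmonic function and its dual) the next Neumann eigenvalues $\mu_m(\alpha)$ blow up like $\alpha^{-2}$, so the corresponding singular exponents $\zeta$ tend to $+\infty$.

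Third, with this in hand the regularity statement follows from the standard Kondrat'ev shift theorem: if the strip $\{0<\mathrm{Re}\,\zeta\le k-\tfrac12\}$ (in the appropriate weighted $\sH^k$ scale, after accounting for the dimension $3$) contains no singular exponent other than the trivial ones already accounted for by the constant and its conormal, then any $\sL^2$ solution that is $\sH^2$ is automatically $\sH^k$ locally. The bound $\mu_m(\alpha)\ge c\,\alpha^{-2}$ for $m\ge 1$ then yields $\mathrm{Re}\,\zeta\ge c'\alpha^{-1}$ for all nontrivial exponents, which exceeds $k$ once $\alpha<\alpha_0(k)$; the trivial exponent $\zeta=0$ only produces a locally constant (hence smooth) contribution and $\zeta=-1$ is not in the relevant half-line. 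Bootstrapping the magnetic lower-order terms $g\in\sH^{k-2}_{loc}$ back into the equation closes the induction on $k$, starting from the already-known $\sH^2_{loc}$ regularity.

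The main obstacle, and the place where I expect the real work to be, is the precise computation (or at least the sharp lower bound) of the Neumann Laplace--Beltrami eigenvalues $\mu_m(\alpha)$ on the small spherical cap, together with the bookkeeping of which singular exponents are admissible for the Neumann problem versus which are excluded --- in particular verifying that the exponent $\zeta=0$ really is harmless and that no exponent sneaks into the critical strip from the behavior of $\mu_m(\alpha)$ for moderate $m$. This is exactly the content that \cite{D88} provides in the abstract conical setting, so in the write-up I would state the reduction carefully, cite Dauge's analysis of the singularity exponents for the localized Neumann problem, and record the elementary fact that $\mu_m(\alpha)\to+\infty$ as $\alpha\to 0$ for $m\ge 1$ (which is immediate from the min-max principle and the shrinking of the cap), from which the threshold $\alpha_0(k)$ is extracted.
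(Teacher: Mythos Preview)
Your proposal is correct and follows essentially the same approach as the paper: the paper itself gives no detailed proof, only a pointer to the methods of \cite{D88} (Chapter~6, Section~18) for the behavior of the singularity exponents, together with the elliptic estimates for the Neumann Laplacian on $\Ca$. Your write-up is precisely an unpacking of that reference --- the reduction to the non-magnetic Neumann problem at the vertex, the Kondrat'ev/Mellin pencil governed by the Laplace--Beltrami spectrum on the shrinking spherical cap, the observation that the nontrivial eigenvalues (hence the nontrivial singular exponents) blow up as $\alpha\to 0$, and the resulting shift theorem plus bootstrap --- so there is nothing to correct or contrast.
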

Then by using the localization estimates of Proposition \ref{aprioriAgmon} and a standard bootstrap argument, we infer:
\begin{proposition}\label{H3}
For all $k\geq 3$ there exists $\alpha_{0}>0$ such that for all $\alpha\in(0,\alpha_{0})$, $\beta\in\left[0,\frac{\pi}{2}\right]$, there exist $\eps>0$ and $C>0$ such that for all eigenpairs $(\lambda,\psi)$ such that $\lambda<s_{\alpha,\beta}$, $\psi$ belongs to $\sH^k(\Ca)$ and:
$$\|\re^{\eps|\bf{x}|}\psi\|_{\sH^k(\Ca)}\leq C\|\psi\|.$$
\end{proposition}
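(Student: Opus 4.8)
The plan is to combine the weighted $\sL^2$-localization from Proposition \ref{aprioriAgmon} with the interior/boundary elliptic regularity of the Neumann magnetic Laplacian on $\Ca$ (which for $k\geq 3$ and $\alpha$ small enough yields genuine $\sH^k$-membership by the preceding proposition), and then propagate the exponential weight through the elliptic estimates by a bootstrap. First I would fix $k\geq 3$ and take $\alpha_0$ so small that every eigenfunction associated with $\lambda<s_{\alpha,\beta}$ lies in $\sH^k_{loc}(\Ca)$; this is exactly the content of the proposition just stated, obtained via the singularity-exponent analysis of \cite{D88}. Note that since $s_{\alpha,\beta}$ and $\lambda$ are bounded below by $\inf_\theta\sigma(\theta)>0$ uniformly in $\alpha$ (Proposition \ref{essential}), the gap $s_{\alpha,\beta}-\lambda$ appearing in Proposition \ref{aprioriAgmon} need not be controlled from below in a delicate way here; we only need \emph{some} fixed $\eps>0$, so I will simply absorb $\sqrt{s_{\alpha,\beta}-\lambda}$ into the constant and rename the exponential rate $\eps$.

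The core of the argument is the weighted bootstrap. Set $w_R({\bf x})=\re^{\eps|{\bf x}|}\chi(|{\bf x}|/R)$ with $\chi$ a smooth cutoff, so that $w_R$ is a bounded Lipschitz function with derivatives bounded independently of $R$ once we also cap $\eps$ suitably; the final estimate follows by letting $R\to\infty$ and invoking Fatou, so I will work with $w=\re^{\eps|{\bf x}|}$ directly and only keep track of the fact that commutators of $w$ with first-order operators cost a factor comparable to $w$ itself. Writing $\phi=w\psi$, the function $\phi$ satisfies $(-i\nabla+\A)^2\phi=w\lambda\psi+[(-i\nabla+\A)^2,w]\psi$, where the commutator is a first-order differential operator with coefficients $O(w)$ acting on $\psi$. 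By Proposition \ref{aprioriAgmon} we already control $\|w\psi\|_{\sL^2}$ and $\|w(-i\nabla+\A)\psi\|_{\sL^2}$, i.e. $\|\phi\|_{\sH^1_{\A}}$; feeding this into the Neumann elliptic estimate for $(-i\nabla+\A)^2$ on $\Ca$ (valid up to order $k$ for $\alpha<\alpha_0$ by the previous proposition, and uniform on the relevant scales because the relevant geometry near the boundary is that of a smooth Lipschitz convex cone) bounds $\|\phi\|_{\sH^2}$ in terms of $\|\phi\|_{\sH^1}$ plus lower-order weighted norms of $\psi$, all of which are already under control. Iterating this $k-1$ times — at each stage the right-hand side $w\lambda\psi+[\text{lower order},w]\psi$ lies in $\sH^{j}$ with a weighted $\sH^{j}$-bound on $\psi$ obtained at the previous stage — gives $\|w\psi\|_{\sH^k(\Ca)}\leq C\|\psi\|$.

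The main obstacle is making the elliptic estimates \emph{uniform}: the constant $C$ in the Neumann $\sH^k$-estimate on $\Ca$ could a priori blow up as $\alpha\to 0$, and the weight $w$ is unbounded, so one must localize. The resolution is to cover $\Ca$ by balls of radius comparable to $1$ (in the scaled variable) away from the vertex and by one fixed neighbourhood of the vertex; on each piece the weight $w$ varies by a bounded multiplicative factor, so commutators with $w$ are harmless, and the local elliptic constants are controlled either by the standard interior/boundary estimates for smooth domains or, near the vertex, by the singularity analysis of \cite{D88} which is precisely what forces the restriction to $\alpha<\alpha_0$. Summing the local weighted estimates and using the already-established weighted $\sL^2$ bound to control the overlaps yields the global inequality; the choice of $\eps$ must be small enough that the exponential growth of $w$ is dominated by the Agmon decay, which is exactly the $\eps$ furnished by Proposition \ref{aprioriAgmon} (possibly shrunk). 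The remaining steps — verifying that the commutator $[(-i\nabla+\A)^2,w]$ is first order with $O(w)$ coefficients, and that $w\psi\in\sH^1_{\A}$ lands in the form domain so the elliptic estimates apply — are routine.
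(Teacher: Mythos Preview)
Your approach is essentially the same as the paper's: the paper states the result with the one-line justification ``by using the localization estimates of Proposition~\ref{aprioriAgmon} and a standard bootstrap argument,'' and your proposal is precisely a fleshed-out version of that bootstrap (weighted function $\phi=w\psi$, commutator equation, elliptic regularity up to order $k$ from the preceding proposition, iterate).

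One remark: your main stated obstacle---uniformity of the elliptic constants as $\alpha\to 0$---is not actually needed here. Read the quantifiers in the statement: for each fixed $\alpha\in(0,\alpha_0)$ and $\beta$, \emph{there exist} $\eps>0$ and $C>0$. So $C$ may depend on $\alpha$, exactly as in Proposition~\ref{aprioriAgmon} (cf.\ the Remark following it). The role of $\alpha_0$ is only to ensure $\sH^k_{loc}$-regularity via the singularity exponents of \cite{D88}; once $\alpha$ is fixed below $\alpha_0$, you may freely use whatever $\alpha$-dependent elliptic constants the cone $\Ca$ gives you. This removes the need for the covering-by-balls argument and the delicate summation you sketch. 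Also worth noting (though you correctly flag the form-domain check as routine): since $\Ca$ is a cone with vertex at the origin, the position vector is tangent to $\partial\Ca$, so $\partial_\nu|{\bf x}|=0$ on the boundary and $w\psi$ inherits the magnetic Neumann condition exactly, which makes the bootstrap clean.
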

\subsection{Refined estimates of Agmon}
The following propositions provide an improvement of the localization estimates satisfied by the eigenfunctions attached to the low lying eigenvalues: we distinguish between the cases $\beta\in[0,\pi/2)$ and $\beta=\pi/2$.
\begin{proposition}\label{Agmon}
Let $C_{0}>0$. For all $\beta\in\left[0,\frac{\pi}{2}\right)$, there exist $\alpha_{0}>0$, $\eps_{0}>0$ and $C>0$ such that for any $\alpha\in(0,\alpha_{0})$ and for all eigenpair $(\lambda,\psi)$ of $\LL$ satisfying $\lambda\leq C_{0}\alpha$, we have:
\begin{equation}
\int_{\Ca} \re^{2\eps_{0}\alpha^{1/2}|z|}|\psi({\bf x})|^2\dx {\bf{x}}\leq C\|\psi\|^2.
\end{equation}
\end{proposition}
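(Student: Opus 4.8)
The plan is to run a standard Agmon-type argument based on the IMS localization formula, but with a weight adapted to the correct length scale $z\sim\alpha^{1/2}$ rather than the crude one given by Proposition \ref{aprioriAgmon}. First I would introduce the Lipschitz weight $\Phi_{\eps_0}(\mathbf{x})=\eps_0\alpha^{1/2}|z|$ (truncated at large $|z|$ to make it bounded, removing the truncation at the end) and test the eigenvalue equation against $\re^{2\Phi_{\eps_0}}\psi$. The IMS formula gives
$$\mathfrak{Q}_{\A}\bigl(\re^{\Phi_{\eps_0}}\psi\bigr)-\|\,|\nabla\Phi_{\eps_0}|\,\re^{\Phi_{\eps_0}}\psi\|^2=\lambda\|\re^{\Phi_{\eps_0}}\psi\|^2,$$
and since $|\nabla\Phi_{\eps_0}|^2=\eps_0^2\alpha$ we get
$$\mathfrak{Q}_{\A}\bigl(\re^{\Phi_{\eps_0}}\psi\bigr)=(\lambda+\eps_0^2\alpha)\,\|\re^{\Phi_{\eps_0}}\psi\|^2\leq (C_0+\eps_0^2)\,\alpha\,\|\re^{\Phi_{\eps_0}}\psi\|^2.$$
So the whole point is to bound $\mathfrak{Q}_{\A}$ from below, on the region $|z|\geq R\alpha^{1/2}$ for suitable $R$, by something like $c\,\alpha\int_{|z|\geq R\alpha^{1/2}}|\re^{\Phi_{\eps_0}}\psi|^2$ with $c>C_0+\eps_0^2$; the contribution from $|z|\leq R\alpha^{1/2}$ is then harmless (there the weight is $O(1)$, so $\int_{|z|\leq R\alpha^{1/2}}\re^{2\Phi_{\eps_0}}|\psi|^2\leq e^{2\eps_0 R}\|\psi\|^2$) and can be absorbed, yielding the claimed estimate.

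The lower bound on $\mathfrak{Q}_{\A}$ far from the apex is where the geometry and the restriction $\beta<\pi/2$ enter. Passing to the spherical coordinates of Section 1.2, one has $z=\alpha^{-1/2}t\cos(\alpha\varphi)$, so $|z|\geq R\alpha^{1/2}$ corresponds to $t\gtrsim R\alpha$, i.e.\ essentially $t\to\infty$ in the rescaled picture for any fixed $R$ as $\alpha\to 0$ — but more usefully the weight $\alpha^{1/2}|z|=t\cos(\alpha\varphi)\sim t$. Thus what is really needed is a lower bound of the model operator at $t=\infty$. Here I would use Proposition \ref{essential}: the bottom of the essential spectrum is bounded below by $\inf_\theta\sigma(\theta)$, a \emph{positive} constant independent of $\alpha$, while $\lambda\leq C_0\alpha\to 0$. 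More concretely, localizing $\mathfrak{Q}_{\A}$ to the region $\{|z|\geq R\alpha^{1/2}\}$ and rescaling, the quadratic form there dominates (up to lower-order terms) a constant-magnetic-field Neumann form on a half-space-like region, whose bottom is $\geq \inf_\theta\sigma(\theta)>0$; hence for $\alpha$ small the form there is $\geq \tfrac12\inf_\theta\sigma(\theta)\,\|\cdot\|^2$, which beats $(C_0+\eps_0^2)\alpha$ once $\eps_0$ is chosen small. This is exactly the mechanism already used in \cite{BR12} and in the standard Agmon machinery for magnetic Schrödinger operators.

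The main obstacle, I expect, is making the region decomposition and the passage to the model operator uniform in $\alpha$: the cone opens up as $\alpha\to 0$, the coefficients in \eqref{Lc} depend on $\alpha$, and one must check that the error terms produced by cutting off at $|z|=R\alpha^{1/2}$ and by approximating the curved cone boundary by a flat half-space are genuinely lower order relative to the gap $\inf_\theta\sigma(\theta)-C_0\alpha$. Concretely, the cut-off functions should be chosen with gradient $O(\alpha^{1/2}/ R)$ so their IMS cost is $O(\alpha/R^2)$, comparable to the $\eps_0^2\alpha$ term and thus absorbable; and the boundary curvature contributes relative errors that are $O(\alpha)$ on the relevant scale. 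A secondary technical point is the truncation of $\Phi_{\eps_0}$ at $|z|=\Lambda$ and the limit $\Lambda\to\infty$, which is routine given Proposition \ref{aprioriAgmon}. Once the uniform lower bound $\mathfrak{Q}_{\A}\bigl(\re^{\Phi_{\eps_0}}\psi\bigr)\geq \tfrac12\inf_\theta\sigma(\theta)\int_{|z|\geq R\alpha^{1/2}}|\re^{\Phi_{\eps_0}}\psi|^2 - C\|\psi\|^2$ is in hand, combining it with the IMS identity above and rearranging gives the result.
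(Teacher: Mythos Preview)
Your Agmon scaffolding (truncated weight $\eps_0\alpha^{1/2}|z|$, IMS identity, partition of unity in $z$) is correct and matches the paper. Note first a consistent slip in the exponent: the relevant scale is $z\sim\alpha^{-1/2}$, and your own side computations---the bound $\re^{2\eps_0 R}$ on the inner piece, the cut-off gradient $O(\alpha^{1/2}/R)$, the IMS cost $O(\alpha/R^2)$---are correct only for a partition at $|z|\sim R\alpha^{-1/2}$, not $R\alpha^{1/2}$.

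The genuine gap is the lower-bound mechanism. You claim that on $\{z\geq R\alpha^{-1/2}\}$ the cone is well modeled by a half-space, so the form dominates $\tfrac12\inf_\theta\sigma(\theta)\|\cdot\|^2$. This fails uniformly in $\alpha$: at height $z\sim R\alpha^{-1/2}$ the cross-section disk has radius $\sim R\alpha^{1/2}\to 0$ and boundary curvature $\sim(R\alpha^{1/2})^{-1}\to\infty$ (not $O(\alpha)$ as you assert); a test function constant on slices and supported there has form value only $O(R^2\alpha)\|\cdot\|^2$. The half-space approximation becomes accurate only when $z\alpha\gg 1$, but cutting at $z\sim\alpha^{-1}$ leaves the weight $\re^{2\eps_0\alpha^{-1/2}}$ unbounded on the inner region, so Proposition~\ref{essential} cannot close the argument. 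What the paper does instead is to change gauge to $D_z^2+(D_x+z\sin\beta)^2+(D_y+x\cos\beta)^2$, drop $D_z^2$, and bound the remainder slice by slice: on each disk $\{z=\mathrm{const}\}$ of radius $z\tan\tfrac\alpha2$ one has (after a slice-wise gauge removal of $z\sin\beta$) the two-dimensional magnetic Neumann Laplacian with field $\cos\beta$, whose bottom satisfies $\mu(\rho)\geq c\min(\rho^2,1)$. With $\gamma=\eps_0^{-1}(\cos\beta)^{-1/2}\alpha^{-1/2}$ this yields $\mathfrak{Q}_{\hat\A}\geq c\cos\beta\,\eps_0^{-2}\alpha\,\|\cdot\|^2$ on $\{z\geq\gamma\}$, which beats $(C_0+\eps_0^2)\alpha$ for $\eps_0$ small. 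This is exactly where the hypothesis $\beta<\tfrac\pi2$ enters---one needs $\cos\beta>0$ for a nontrivial transverse field on each slice---whereas your proposed mechanism assigns no role to that hypothesis, which is a symptom of the missing idea.
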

\begin{proof}
Thanks to a change of gauge $\mathfrak{L}_{\A}$ is unitarily equivalent to the Neumann realization of:
$$\mathfrak{L}_{\hat\A}=D_{z}^2+(D_{x}+z\sin\beta)^2+(D_{y}+x\cos\beta)^2.$$
The associated quadratic form is:
$$\mathfrak{Q}_{\hat\A}(\psi)=\int_{\Ca} |D_{z}\psi|^2+|(D_{x}+z\sin\beta)\psi|^2+|(D_{y}+x\cos\beta)\psi|^2\dx x \dx y \dx z.$$
Let us introduce a smooth cut-off function $\chi$ such that $\chi=1$ near $0$ and let us also consider, for $R\geq 1$ and $\eps_{0}>0$:
$$\Phi_{R}(z)=\eps_{0}\alpha^{1/2} \chi\left(R^{-1}z \right) |z|.$$
The Agmon identity gives:
$$\mathfrak{Q}_{\hat\A}(\re^{\Phi_{R}}\psi)=\lambda \|\re^{\Phi_{R}}\psi\|^2+\|\nabla\Phi_{R} \re^{\Phi_{R}}\psi\|^2.$$
There exist $\alpha_{0}>0$ and $\tilde C_{0}>0$ such that for $\alpha\in(0,\alpha_{0})$, $R\geq 1$ and $\eps_{0}\in(0,1)$, we have:
$$\mathfrak{Q}_{\hat\A}(\re^{\Phi_{R}}\psi)\leq \tilde C_{0}\alpha \|\re^{\Phi_{R}}\psi\|^2.$$
We introduce a partition of unity with respect to $z$:
$$\chi_{1}^2(z)+\chi_{2}^2(z)=1,$$
where $\chi_{1}(z)=1$ for $0\leq z\leq 1$ and $\chi_{1}(z)=0$ for $z\geq 2.$ 
For $j=1,2$ and $\gamma>0$, we let:
$$\chi_{j,\gamma}(z)=\chi_{j}(\gamma^{-1}z),$$
so that:
$$\|\chi'_{j,\gamma}\|\leq C\gamma^{-1}.$$
The \enquote{IMS} formula provides:
\begin{equation}\label{IMS-inequality}
\mathfrak{Q}_{\hat\A}(\re^{\Phi_{R}}\chi_{1,\gamma}\psi)+{\mathfrak{Q}_{\hat\A}}(\re^{\Phi_{R}}\chi_{2,\gamma}\psi)-C^2\gamma^{-2}\|\re^{\Phi_{R}}\psi\|^2
    \leq  \tilde C_{0}\alpha \|\re^{\Phi_{R}}\psi\|^2.
\end{equation}
We want to write a lower bound for ${\mathfrak{Q}_{\hat\A}}(\re^{\Phi_{R}}\chi_{2,\gamma}\psi)$. Integrating by slices we have for all $u\in\Dom(\mathfrak{Q}_{\hat\A})$:
\begin{eqnarray}\label{min-Q}
\nonumber\mathfrak{Q}_{\hat\A}(u)
&\geq& \int_{z>0}\left(\int_{\{\sqrt{x^2+y^2}\leq z\tan\frac\alpha2\}}|(D_{x}+z\sin\beta)u|^2+|(D_{y}+x\cos\beta)u|^2\dx x\dx y\right)\dx z\\
&\geq& \int_{z>0}\left(\int_{\{\sqrt{x^2+y^2}\leq z\tan\frac\alpha2\}}|D_{x}\tilde u|^2+|(D_{y}+x\cos\beta)\tilde u|^2\dx x\dx y\right)\dx z\\
\nonumber&\geq&\cos\beta \int_{z>0} \mu\left(z\sqrt{\cos\beta} \tan\frac\alpha2\right)\int_{\{\sqrt{x^2+y^2}\leq z\tan\frac\alpha2\}} |u|^2\dx x\dx y\dx z,
\end{eqnarray}
where we have used the change of gauge (for fixed $z$) $\tilde u=\re^{ixz\sin\beta}u$ and we denote by $\mu(\rho)$ the lowest eigenvalue of the magnetic Neumann Laplacian $D_{x}^2+(D_{y}+x)^2$ on the disk of center $(0,0)$ and radius $\rho$.
By using a basic perturbation theory argument for small $\rho$ (see \cite[Proposition 1.5.2]{FouHel10}) and a semiclassical behaviour for large $\rho$ (see \cite[Section 8.1]{FouHel10}) we infer the existence of $c>0$ such that for all $\rho\geq 0$:
\begin{equation}\label{min-mu}
\mu(\rho)\geq c\min(\rho^2,1).
\end{equation}
We infer: 
$$\mathfrak{Q}_{\hat\A}(\re^{\Phi_{R}}\chi_{2,\gamma}\psi)\geq\int_{z>0} c \cos\beta \min(z^2\alpha^2\cos\beta,1) \int_{\{\sqrt{x^2+y^2}\leq z\tan\frac\alpha2\}}|\re^{\Phi_{R}}\chi_{2,\gamma}\psi|^2\dx x\dx y\dx z.$$
We choose $\gamma=\eps_{0}^{-1}(\cos\beta)^{-1/2}\alpha^{-1/2}$. On the support of $\chi_{2,\gamma}$ we have $z\geq \gamma$. It follows:
$$\mathfrak{Q}_{\hat\A}(\re^{\Phi_{R}}\chi_{2,\gamma}\psi)\geq  c \cos\beta \min(\eps_{0}^{-2}\alpha,1) \|\re^{\Phi_{R}}\chi_{2,\gamma}\psi\|^2.$$
For $\alpha$ such that $\alpha \leq \eps_{0}^2$, we have:
$$\mathfrak{Q}_{\hat\A}(\re^{\Phi_{R}}\chi_{2,\gamma}\psi)\geq  c\alpha\eps_{0}^{-2} \cos\beta \|\re^{\Phi_{R}}\chi_{2,\gamma}\psi\|^2.$$
We deduce that there exist $c>0$, $C>0$ and $\tilde C_{0}>0$ such that for all $\eps_{0}\in(0,1)$ there exists $\alpha_{0}>0$ such that for all  $R\geq 1$ and $\alpha\in(0,\alpha_{0})$:
$$( c \eps_{0}^{-2}\cos\beta-C)\alpha \|\chi_{2,\gamma}\re^{\Phi_{R}}\psi\|^2\leq \tilde C_{0}\alpha \|\chi_{1,\gamma}\re^{\Phi_{R}}\psi\|^2.$$
Since $\cos\beta>0$ and $c>0$, if we choose $\eps_{0}$ small enough, this implies:
$$\|\chi_{2,\gamma}\re^{\Phi_{R}}\psi\|^2\leq \tilde C \|\chi_{1,\gamma}\re^{\Phi_{R}}\psi\|^2\leq \hat C \|\psi\|^2.$$
It remains to take the limit $R\to+\infty$.
\end{proof}

\begin{remark}
Proposition \ref{Agmon} is a refinement of \cite[Proposition 4.1]{BR12} (see also \cite[Remark 4.2]{BR12}) and provides the optimal length scale $z\sim \alpha^{-1/2}$ (or equivalently $t\sim 1$)  for all $\beta\in\left[0,\frac{\pi}{2}\right)$ (in the sense that it exactly corresponds to the rescaling used in the construction of quasimodes).
\end{remark}

In order to analyze the case $\beta=\frac \pi 2$ we will need the following two lemmas the proof of which can be adapted from \cite[Sections 1.5 and 8.1]{FouHel10}. The main point in these lemmas is the uniformity with respect to the geometric constants. The first one is a consequence of perturbation theory.
\begin{lemma}\label{spectrum-ellipse}
Let $0<\delta_{0}<\delta_{1}$. There exist $\rho_{0}>0$ and $c_{0}>0$ such that for $\rho\in(0,\rho_{0})$ and $\delta\in(\delta_{0},\delta_{1})$ we have:
$$\mu_{1}(\delta,\rho)\geq c_{0}\rho^2,$$
where $\mu_{1}(\delta,\rho)$ denotes the first eigenvalue of the Neumann Laplacian with constant magnetic field of intensity $1$ on the ellipse $\mathcal{E}_{\delta,\rho}=\left\{(u,v)\in\R^2 :  u^2+\delta v^2\leq \rho^2\right\}$.
\end{lemma}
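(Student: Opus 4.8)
The plan is to treat the ellipse $\mathcal{E}_{\delta,\rho}$ as a small perturbation of a ball of radius comparable to $\rho$, and to establish a lower bound $\mu_1(\delta,\rho)\geq c_0\rho^2$ with $c_0$ \emph{uniform} over $\delta\in(\delta_0,\delta_1)$ by rescaling the problem to a fixed-size domain where the small parameter becomes the magnetic intensity. First I would perform the affine change of variables $(u,v)=(\rho\,u',\rho\,\delta^{-1/2}v')$, which maps $\mathcal{E}_{\delta,\rho}$ onto the \emph{fixed} ellipse $\mathcal{E}_{1,1}\cap\{\delta^{-1}u'^2+v'^2\le 1\}$ — more precisely onto $\{(u',v'):u'^2+v'^2\le 1\}$ after also rescaling $u'$; let me instead use $(u,v)=\rho(u',\delta^{-1/2}v')$ so the image is the unit disk $\mathbb{D}$. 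Under this map the Dirichlet form $\int_{\mathcal{E}_{\delta,\rho}}|(-i\nabla+\mathbf{A})\phi|^2$ with $\mathrm{curl}\,\mathbf{A}=1$ becomes, after absorbing the Jacobian, a magnetic Neumann form on $\mathbb{D}$ whose metric coefficients depend on $\delta$ only through bounded, uniformly elliptic factors (since $\delta\in(\delta_0,\delta_1)$ is bounded away from $0$ and $\infty$), and whose effective magnetic field has intensity of order $\rho^2$.

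The core of the argument is then a perturbation estimate: for the magnetic Neumann Laplacian on a fixed smooth bounded domain with magnetic field of small intensity $b$, the first eigenvalue behaves like $b^2$ times a positive geometric constant, as in \cite[Proposition 1.5.2]{FouHel10}. The only thing to check is that the implicit constants in that perturbation expansion can be taken uniform as $\delta$ ranges over the compact interval $[\delta_0,\delta_1]$; this follows because the family of operators depends continuously (indeed smoothly) on $\delta$ in a way that preserves uniform ellipticity and the domain is fixed, so the norm-resolvent estimates underlying the perturbation expansion are uniform. Concretely, one writes the form as $\|\nabla\phi\|^2 + b\,\ell_\delta(\phi) + b^2 q_\delta(\phi)$ with $\ell_\delta,q_\delta$ bounded relative forms uniformly in $\delta$, observes that the unperturbed operator (the nonmagnetic Neumann Laplacian) has $0$ as a simple isolated eigenvalue with constant eigenfunction and spectral gap bounded below uniformly, and applies Kato's analytic perturbation theory; the $b^1$ term contributes nothing at leading order because the magnetic field cannot be gauged away on a domain with nonzero flux, giving a strictly positive $b^2$ coefficient bounded below uniformly in $\delta$. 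Tracking back the scaling $b\sim\rho^2$ yields $\mu_1(\delta,\rho)\geq c_0\rho^2$ for $\rho<\rho_0$, with $\rho_0,c_0$ depending only on $\delta_0,\delta_1$.

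\textbf{Main obstacle.} The routine part is the change of variables and the bookkeeping of Jacobians. The genuinely delicate point is establishing \emph{uniformity} of the perturbation constants in $\delta$: one must verify that the spectral gap of the limiting (nonmagnetic) operator stays bounded below and that the relatively bounded perturbation terms $\ell_\delta,q_\delta$ have norms bounded uniformly on $[\delta_0,\delta_1]$. This is where a cheap argument "for fixed $\delta$" would not suffice, and it is precisely the uniformity that the lemma is asserting; a clean way to handle it is a compactness argument over $\delta\in[\delta_0,\delta_1]$ together with continuity of eigenvalues, rather than an explicit expansion. One should also be slightly careful that the positivity of the $b^2$-coefficient — which is a moment-of-inertia type quantity for the domain — is bounded below uniformly, but since the domain after rescaling is the unit disk (independent of $\delta$) this quantity is in fact a fixed positive constant, so the only $\delta$-dependence sits in the uniformly elliptic metric factors coming from the anisotropic rescaling, and these are harmless.
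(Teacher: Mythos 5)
Your proposal is correct and follows the same route the paper indicates: the paper gives no proof but refers to \cite[Section 1.5]{FouHel10} (perturbation theory for small magnetic field after rescaling), emphasizing that the point is uniformity in the geometric constant $\delta$, and your argument — rescaling $(u,v)=\rho(u',\delta^{-1/2}v')$ to the unit disk, reducing to a $\delta$-anisotropic magnetic operator with field of order $\rho^2$, applying Kato perturbation theory around the constant ground state with a $\delta$-uniform spectral gap, and noting the $b^2$-coefficient is positive and uniformly bounded below on $[\delta_0,\delta_1]$ — is exactly the adaptation the authors have in mind. The only cosmetic remark is that the opening sentence with the first (wrong) affine map should simply be deleted, and the compactness-plus-continuity shortcut you mention still needs the perturbative fact that $\nu_1(\delta,b)/b^2$ extends continuously and positively to $b=0$ in order to pass to the limit.
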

The second lemma is a consequence of semiclassical analysis with  semiclassical parameter $h=\rho^{-2}$.
\begin{lemma}\label{spectrum-ellipse2}
Let $0<\delta_{0}<\delta_{1}$. There exist $\rho_{1}>0$ and $c_{1}>0$ such that for $\rho\geq \rho_{1}$ and $\delta\in(\delta_{0},\delta_{1})$ we have:
$$\mu_{1}(\delta,\rho)\geq c_{1}.$$
\end{lemma}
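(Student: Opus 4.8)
The plan is to reduce the problem to a fixed reference domain by rescaling, so that the large parameter $\rho$ becomes a small semiclassical parameter $h$, and then to invoke the standard semiclassical lower bound for the magnetic Neumann Laplacian. First I would set $h=\rho^{-2}$ and perform the dilation $(u,v)=\rho(\check u,\check v)$, which maps $\mathcal{E}_{\delta,\rho}$ onto the fixed ellipse $\mathcal{E}_{\delta,1}=\{\check u^2+\delta\check v^2\leq 1\}$. Under this change of variables the eigenvalue equation for the magnetic Neumann Laplacian with unit field on $\mathcal{E}_{\delta,\rho}$ becomes, after multiplication by $\rho^2$, the eigenvalue equation for the semiclassical operator $(-ih\nabla+\mathbf{A})^2$ with Neumann condition on $\mathcal{E}_{\delta,1}$, where $\mathbf{A}$ generates a constant unit magnetic field; concretely $\mu_{1}(\delta,\rho)=\rho^{-2}\lambda_{1}(h,\delta)$ with $h=\rho^{-2}$ and $\lambda_{1}(h,\delta)$ the bottom of the spectrum of the rescaled operator. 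So it suffices to show $\lambda_{1}(h,\delta)\geq c\,h$ for $h$ small, uniformly for $\delta\in(\delta_{0},\delta_{1})$.

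Next I would quote the classical semiclassical lower bound for the magnetic Neumann Laplacian on a smooth bounded two-dimensional domain (as in \cite[Section 8.1]{FouHel10}, and \cite{HelMo01}): there exists $c>0$ such that $\lambda_{1}(h,\delta)\geq c\,\Theta_{0}\,h+o(h)$, where $\Theta_{0}\in(0,1)$ is the de Gennes constant governing the boundary behaviour (the bulk contributes the larger constant $h$). In particular $\lambda_{1}(h,\delta)\geq \tfrac{c}{2}h$ for $h\leq h_{0}(\delta)$. The only genuinely nontrivial point — and the main obstacle — is the \emph{uniformity} of $h_{0}$ and of the constant with respect to $\delta\in(\delta_{0},\delta_{1})$: a priori the semiclassical threshold could degenerate as $\delta$ approaches the endpoints. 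I would handle this by a compactness/continuity argument: the family of domains $(\mathcal{E}_{\delta,1})_{\delta\in[\delta_{0},\delta_{1}]}$ is a smoothly varying family of uniformly $C^{\infty}$ bounded domains with curvature bounds independent of $\delta$, so one can revisit the proof in \cite[Section 8.1]{FouHel10} and observe that every estimate there (the partition of unity into interior and boundary balls, the local comparison with the half-plane model operator, the control of the magnetic field variation on each ball) depends only on the diameter, an interior/exterior cone condition, and upper bounds on the principal curvatures of the boundary — all of which are uniform over the compact parameter interval $[\delta_{0},\delta_{1}]$. Hence the constants $c_{1}$ and $\rho_{1}$ can be chosen uniformly.

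Alternatively, and perhaps more cleanly, I would argue by contradiction using the lower semicontinuity of the ground-state energy: if the claim failed there would be sequences $\rho_{k}\to\infty$, $\delta_{k}\to\delta_{\infty}\in[\delta_{0},\delta_{1}]$, and normalized ground states $\psi_{k}$ with $\mu_{1}(\delta_{k},\rho_{k})\to 0$; after the rescaling above and extraction of a convergent subsequence of the geometry, a standard concentration-compactness analysis (the $\psi_k$ either concentrate in the bulk, where the limiting model is the full-plane Landau Hamiltonian with bottom $1$, or near the boundary, where it is the half-plane model with bottom $\Theta_{0}$) forces the limiting energy to be bounded below by $\Theta_{0}>0$, a contradiction. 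Either route gives the stated uniform bound $\mu_{1}(\delta,\rho)\geq c_{1}$ for $\rho\geq\rho_{1}$, and I expect the write-up to be short since all the hard analysis is imported from \cite{FouHel10} and only the uniformity bookkeeping is new.
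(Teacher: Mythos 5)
Your approach matches the paper's, which does not write out a proof but simply points to the semiclassical analysis of \cite[Section 8.1]{FouHel10} with parameter $h=\rho^{-2}$ and emphasizes that the only thing to check is uniformity of the constants in $\delta$ — exactly the two points you identify and address. One algebraic slip: after the dilation $(u,v)=\rho(\check u,\check v)$ the Rayleigh quotient scales as $\rho^{2}$ times the semiclassical one, so the correct relation is $\mu_{1}(\delta,\rho)=\rho^{2}\lambda_{1}(h,\delta)=h^{-1}\lambda_{1}(h,\delta)$, not $\rho^{-2}\lambda_{1}(h,\delta)$; your subsequent deduction ``$\lambda_{1}\geq c\,h$ implies $\mu_{1}\geq c$'' only works with the corrected formula, so this is evidently a typo rather than a gap in reasoning.
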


\begin{proposition}\label{pi/2}
Let $C_{0}>0$ and $\beta=\frac{\pi}{2}$. There exist $\alpha_{0}>0$, $\eps_{0}>0$ and $C>0$ such that for any $\alpha\in(0,\alpha_{0})$ and for all eigenpair $(\lambda,\psi)$ of $\LL$ satisfying $\lambda\leq C_{0}\alpha$, we have:
\begin{equation}
\int_{\Ca} \re^{2\eps_{0}\alpha^{1/2}|z|}|\psi({\bf x})|^2\dx {\bf{x}}\leq C\|\psi\|^2.
\end{equation}
\end{proposition}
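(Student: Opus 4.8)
The plan is to follow the same scheme as in the proof of Proposition \ref{Agmon}, with the main modification being the replacement of the magnetic Neumann Laplacian on a \emph{disk} (which controls the transverse confinement when $\cos\beta>0$) by the magnetic Neumann Laplacian on an \emph{ellipse}, since at $\beta=\frac{\pi}{2}$ the relevant transverse operator degenerates anisotropically. After the same change of gauge we work with $\mathfrak{L}_{\hat\A}=D_{z}^2+(D_{x}+z)^2+D_{y}^2$ on $\Ca$ (here $\sin\beta=1$, $\cos\beta=0$). We introduce the same weight $\Phi_{R}(z)=\eps_{0}\alpha^{1/2}\chi(R^{-1}z)|z|$, apply the Agmon identity and the a priori bound $\lambda\le C_0\alpha$ to obtain $\mathfrak{Q}_{\hat\A}(\re^{\Phi_{R}}\psi)\le \tilde C_{0}\alpha\|\re^{\Phi_{R}}\psi\|^2$, introduce the partition of unity $\chi_{1,\gamma}^2+\chi_{2,\gamma}^2=1$ in $z$ at scale $\gamma$ to be chosen, and use the IMS formula to get the analogue of \eqref{IMS-inequality}.

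The heart of the argument is the lower bound for $\mathfrak{Q}_{\hat\A}(\re^{\Phi_{R}}\chi_{2,\gamma}\psi)$ away from the tip. Integrating by slices in $z$, for fixed $z$ the relevant two-dimensional form is $\int |(D_{x}+z)u|^2+|D_{y}u|^2$ over the disk $\{x^2+y^2\le z^2\tan^2\frac{\alpha}{2}\}$. We rescale $x=\rho X$, $y=\rho Y$ with $\rho=z\tan\frac{\alpha}{2}$ so that the domain becomes the unit disk, but then the magnetic term becomes $|(\rho^{-1}D_X+z)u|^2$; to bring this to the standard form $|D_X+X|^2$ (intensity $1$) we must rescale anisotropically, which turns the unit disk into an ellipse $\mathcal{E}_{\delta,\tilde\rho}$ with eccentricity parameter $\delta$ bounded between fixed constants $\delta_0,\delta_1$ (depending only on the range of $\alpha$) and with $\tilde\rho\sim z\alpha^{3/2}$ (the magnetic flux through the slice). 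Combining Lemma \ref{spectrum-ellipse} for small $\tilde\rho$ and Lemma \ref{spectrum-ellipse2} for large $\tilde\rho$ — this is exactly where the uniformity in the geometric constants matters — we obtain $\mu_{1}(\delta,\tilde\rho)\ge c\min(\tilde\rho^2,1)$, hence a transverse-energy lower bound on each slice of the form $\int_{z>0} c\,\alpha^2\min(z^2\alpha^{?},1)\,\|\cdot\|^2_{L^2(\text{slice})}\dx z$ after undoing the scalings; choosing $\gamma$ to be a suitable negative power of $\alpha$ so that on $\{z\ge\gamma\}$ this prefactor is $\ge c'\eps_0^{-2}\alpha$, we conclude $\mathfrak{Q}_{\hat\A}(\re^{\Phi_{R}}\chi_{2,\gamma}\psi)\ge c'\eps_0^{-2}\alpha\|\re^{\Phi_{R}}\chi_{2,\gamma}\psi\|^2$.

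Plugging this into the IMS inequality gives, for $\eps_0$ small enough and $\alpha$ small enough, $(c'\eps_0^{-2}-\tilde C)\alpha\|\chi_{2,\gamma}\re^{\Phi_{R}}\psi\|^2\le \hat C\alpha\|\chi_{1,\gamma}\re^{\Phi_{R}}\psi\|^2$, whence $\|\chi_{2,\gamma}\re^{\Phi_{R}}\psi\|^2\le C\|\chi_{1,\gamma}\re^{\Phi_{R}}\psi\|^2\le C'\|\psi\|^2$ since $\Phi_R$ is bounded on the support of $\chi_{1,\gamma}$; letting $R\to+\infty$ and using Fatou's lemma yields the claimed weighted estimate. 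The main obstacle, compared with the case $\beta\in[0,\frac\pi2)$, is precisely the bookkeeping of the two rescalings that reduce the transverse problem to a magnetic Neumann problem on a family of ellipses with controlled eccentricity, together with tracking the exact powers of $\alpha$ and $z$ in the effective transverse potential so that the choice of $\gamma$ (a power of $\alpha^{-1}$) produces the confinement energy $\sim\alpha$ needed to beat the error term $C^2\gamma^{-2}$ coming from the partition of unity; everything else is a verbatim repetition of the proof of Proposition \ref{Agmon}.
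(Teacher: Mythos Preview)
Your proposal has a genuine gap at the crucial step. When $\beta=\frac{\pi}{2}$ the transverse operator on a slice $\{z=\text{const}\}$ is $(D_{x}+z)^2+D_{y}^2$, but here $z$ is a \emph{constant} on the slice, so the associated two-dimensional magnetic field $\partial_{x}(0)-\partial_{y}(z)=0$ vanishes identically. After the gauge change $u\mapsto \re^{ixz}u$ this operator is just the free Neumann Laplacian on the disk, whose lowest eigenvalue is $0$. Consequently there is no positive lower bound available from slicing in $z$, and your claimed rescaling cannot help: rescalings (anisotropic or not) preserve vanishing curl, so $(\rho^{-1}D_{X}+z)$ can never be brought to $D_{X}+X$. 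Your remark that $\tilde\rho$ is ``the magnetic flux through the slice'' in fact confirms the problem, since that flux is ${\bf B}\cdot\hat z\times(\text{area})=\cos\beta\times(\text{area})=0$.

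The paper's key idea, which your sketch misses, is to \emph{rotate} the slicing direction: one introduces new coordinates $(u,v,w)$ obtained by a fixed rotation of angle $\omega\in(0,\frac{\pi}{2})$ in the $(y,z)$-plane, so that the magnetic field acquires a nonzero component $-\sin\omega$ along the new axis $w$. Slicing the cone at fixed $w$ then yields genuine ellipses (this is the geometric reason ellipses appear, not an artefact of anisotropic rescaling), with eccentricity parameter $\delta_{\alpha,\omega}$ bounded between positive constants independent of $\alpha$, and the transverse magnetic problem on each ellipse now carries a nonzero constant field $\sin\omega$. Lemmas~\ref{spectrum-ellipse} and~\ref{spectrum-ellipse2} then give the needed lower bound $\mu_{1}(\delta_{\alpha,\omega},\rho)\geq c\min(\rho^2,1)$ uniformly in the eccentricity, and the rest of the Agmon argument proceeds as you outlined. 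The ellipses are thus produced by tilted sections of the cone, not by rescaling disks.
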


\begin{proof}
The structure of the proof is the same as for Proposition \ref{Agmon}. The only problem is to replace the inequalities \eqref{min-Q} (since it degenerates when $\beta=\frac{\pi}{2}$) and \eqref{min-mu} (since there is no more reason to consider a magnetic operator on a disk). The main idea to get around the absence of magnetic field in the direction of the cone is to integrate the quadratic form by slices which are not orthogonal to the axis of the cone (see again \eqref{min-Q}): this leads to consider a Laplacian  with a constant (and non trivial) magnetic field on ellipses.

For that purpose, we introduce the following rotation (see Figure \ref{fig.rot}):
\begin{equation}\label{rotation}
x=u,\quad y= \cos\omega\, v-\sin\omega\, w,\quad z=\sin\omega\, v+\cos\omega\, w,
\end{equation}
where $\omega\in\left(0,\frac{\pi}{2}\right)$ is fixed and independent from $\alpha$.
\begin{figure}[h!tb]
\begin{center}
\includegraphics[height=5cm]{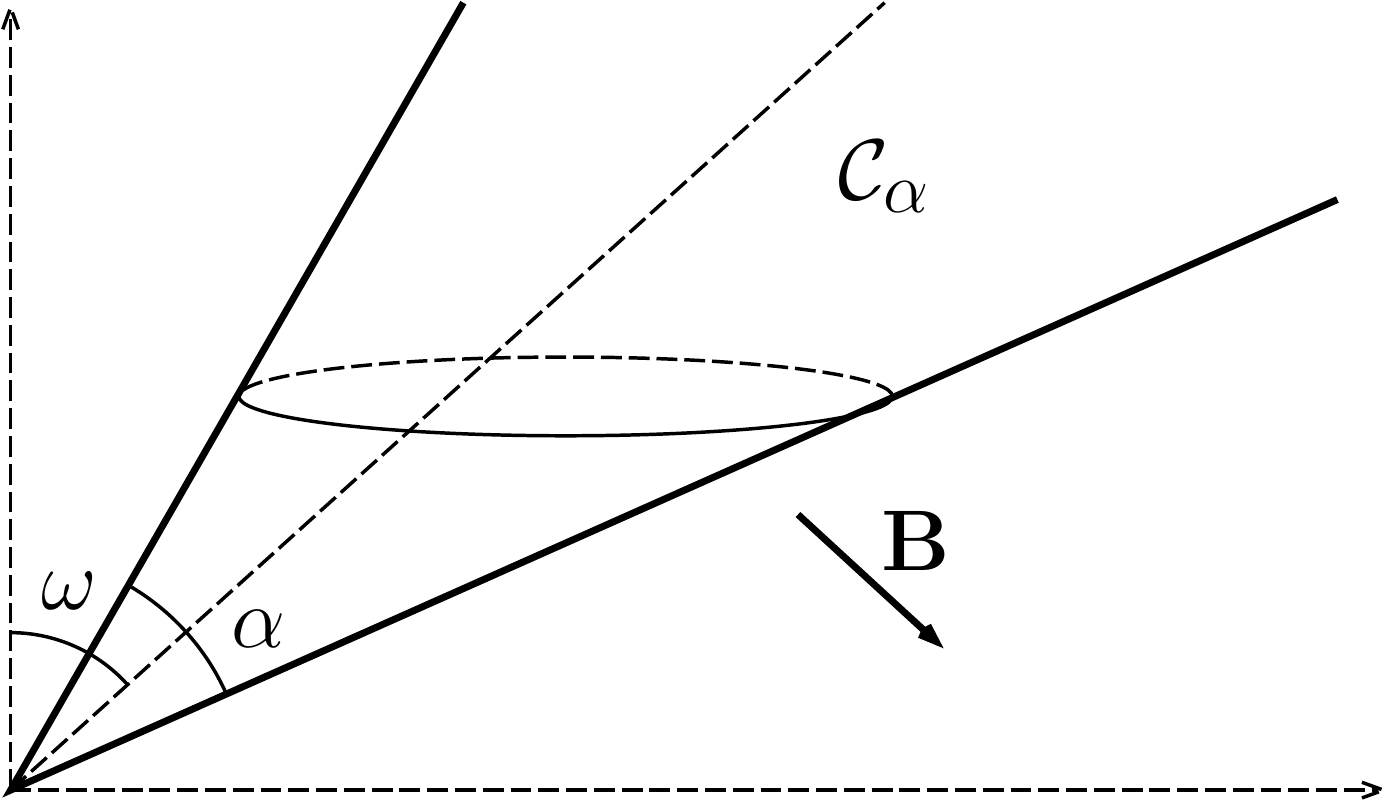}
\caption{Rotation of the cone\label{fig.rot}}
\end{center}
\end{figure}
The $(u,v,w)$-coordinates of ${\bf B}$ are $(0,\cos\omega,-\sin\omega)$. Let us describe the ellipses obtained for fixed $w$.
The cone $\Ca$ is determined by the following inequality:
$$x^2+y^2\leq \tan^2\left(\a\right)\, z^2$$
which becomes:
$$u^2+\delta_{\alpha,\omega}\left(v-\frac{\cos\omega\,\sin\omega\,\left(1+\tan^2\left(\a\right)\right)}{\cos^2\omega-\tan^2\left(\a\right)\sin^2\omega}w\right)^2\leq R_{\alpha,\omega}^2 w^2.$$
where:
$$\delta_{\alpha,\omega}=\cos^2\omega-\tan^2\left(\a\right)\sin^2\omega,$$
\begin{equation}\label{R}
R^2_{\alpha,\omega}=\frac{\tan^2\left(\a\right)}{\cos^2\omega-\tan^2\left(\a\right)\sin^2\omega}.
\end{equation}
Let $\alpha_{0}\in\left(0,\frac{1}{2}\left(\arctan(1/\tan^2\omega\right)\right)$, we notice that 
$$0<\delta_{0}:=\cos^2\omega-\tan^2\left(\tfrac{\alpha_{0}}2\right)\sin^2\omega\leq\delta_{\alpha,\omega}\leq\cos^2\omega=:\delta_{1},\qquad \forall \alpha\in(0,\alpha_{0}).$$ 
With Lemmas \ref{spectrum-ellipse} and \ref{spectrum-ellipse2}, we infer, in the same way as after \eqref{min-Q} and \eqref{min-mu}, the existence of $\alpha_{0}>0$ and $c_{2}>0$ such that for all $\alpha\in(0,\alpha_{0})$ and $\psi\in\Dom\left(\mathfrak{Q}_{\hat{\A}}\right)$\footnote{For a given $w$, we get an ellipse $\mathcal{E}_{\delta_{\alpha,\omega},R_{\alpha,\omega}}$ which is subject to a magnetic field of intensity $\sin\omega$, or equivalently (after dilation) an ellipse $\mathcal{E}_{\delta_{\alpha,\omega},R_{\alpha,\omega}\sqrt{\sin\omega}}$ which is subject to a magnetic field of intensity $1$.}:
$$\mathfrak{Q}_{\hat{\A}}(\psi)\geq\int_{w>0} c_{2}\sin\omega\min(\sin\omega\, R_{\alpha,\omega}^2 w^2,1)\int_{\mathcal{E}_{\delta_{\alpha,\omega},R_{\alpha,\omega}}}|\psi_{\omega}|^2\dx u\dx v     \dx w,$$
where $\psi_{\omega}$ denotes the function $\psi$ after rotation and translation (to get a centered ellipse). 
Then, the proof goes along the same lines as in the proof of Proposition \ref{Agmon} after \eqref{min-mu}. We can express $w$ in terms of the original coordinates $w=z\cos\omega-y\sin\omega$ and $|y|\leq z\tan\left(\a\right)$ so that $z\geq \gamma$ implies:
$$w\geq \gamma\left(\cos\omega-\tan\left(\a\right)\right)\geq\gamma \frac{\cos\omega}{2},$$
as soon as $\alpha$ is small enough. Moreover we deduce from \eqref{R} that  $R^2_{\alpha,\omega}\geq\tan^2\left(\a\right)/\delta_{1}$. 
These considerations are sufficient to conclude as in the proof of Proposition \ref{Agmon}.
\end{proof}
From Propositions \ref{Agmon} and \ref{pi/2}, we can deduce the following corollary.
\begin{corollary}\label{lem.apppsi}
Let $C_{0}>0$ and $\beta\in\left[0,\frac{\pi}{2}\right]$. For all $k\in\N$ there exist $\alpha_{0}>0$, $C>0$ such that for all eigenpairs $(\lambda,\psi)$ of $\L$ such that $\lambda \leq C_{0}$, we have:
$$\|t^{k}\psi\|\leq C\|\psi\|,\quad \Q(t^k\psi)\leq C\|\psi\|^2,\quad \|t^kD_{t}\psi\|\leq C\|\psi\|,\quad \|t^{k}D_{\varphi}\psi\|\leq C\alpha\|\psi\|.$$
\end{corollary}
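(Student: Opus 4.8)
The plan is to deduce all four bounds from the refined exponential localization of Propositions \ref{Agmon} and \ref{pi/2}, transported to the coordinates $(t,\theta,\varphi)$. First I would note that under the change of variables $\Phi$ one has $z=\alpha^{-1/2}t\cos(\alpha\varphi)>0$, so that for $\varphi\in(0,\tfrac12)$ and $\alpha$ small enough $\alpha^{1/2}|z|=t\cos(\alpha\varphi)\geq\tfrac12\,t$. An eigenpair $(\lambda,\psi)$ of $\L$ with $\lambda\leq C_{0}$ is, through the gauge transformation and rescaling used to pass from $\mathfrak{Q}_{\A}$ to $\Q$, associated with an eigenpair of $\LL$ with eigenvalue $\alpha\lambda\leq C_{0}\alpha$, to which Propositions \ref{Agmon} and \ref{pi/2} apply. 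Rewriting $\int_{\Ca}\re^{2\eps_{0}\alpha^{1/2}|z|}|\psi({\bf x})|^2\dx{\bf x}$ in the variables $(t,\theta,\varphi)$ — the density of $\dx x\dx y\dx z$ with respect to $\dx t\dx\theta\dx\varphi$ equals $\alpha^{-1/2}t^2\sin(\alpha\varphi)$ and the gauge factor has modulus one, so that $|\psi({\bf x})|^2\dx{\bf x}$ becomes $|\psi|^2\dhmu$ — one obtains a constant $\eps_{1}>0$, independent of $\alpha$, with
$$\int_{\Pc}\re^{2\eps_{1}t}|\psi|^2\,\dhmu\leq C\|\psi\|^2$$
for every such eigenpair of $\L$, uniformly for $\alpha\in(0,\alpha_{0})$. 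Since $t^{2k}\leq C_{k}\re^{2\eps_{1}t}$ on $[0,+\infty)$, this already yields $\|t^k\psi\|\leq C\|\psi\|$ for every $k\in\N$.

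Next I would establish the bound on $\Q(t^k\psi)$ by an Agmon identity. For a real, bounded function $f=f(t)$ with bounded derivative, multiplication by $f$ commutes with $P_{2}$, $P_{3}$ and with $D_{\theta}$, $D_{\varphi}$, while $P_{1}(f\psi)=fP_{1}\psi-if'\psi$; expanding the squares, integrating by parts in $t$ and using that $\psi$ is an eigenfunction of $\L$ gives
$$\Q(f\psi)=\lambda\,\|f\psi\|^2+\|f'\psi\|^2.$$
I would apply this with $f_{R}$ smooth, equal to $t^k$ for $t\leq R$ and constant for $t\geq 2R$, so that $|f_{R}|\leq t^k$ and $|f_{R}'|\leq C_{k}t^{k-1}$ uniformly in $R\geq 1$; together with the first bound (for the exponents $k$ and $k-1$, the case $k=0$ being handled by $|f_{R}'|\leq CR^{-1}$), this gives $\Q(f_{R}\psi)\leq C\|\psi\|^2$ uniformly in $R$. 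Letting $R\to+\infty$, using $f_{R}\psi\to t^k\psi$ in $\sL^2(\Pc,\dhmu)$ and the lower semicontinuity of the closed form $\Q$, one gets $t^k\psi\in\Dom(\Q)$ and $\Q(t^k\psi)\leq C\|\psi\|^2$.

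The two remaining bounds follow by expanding $\Q(t^k\psi)$ into the nonnegative pieces $\|P_{j}(t^k\psi)\|^2$, $j=1,2,3$, each of which is therefore $\leq C\|\psi\|^2$. From
$$P_{1}(t^k\psi)=t^kD_{t}\psi-ik\,t^{k-1}\psi-t^{k+1}\varphi\cos\theta\sin\beta\,\psi,$$
the triangle inequality, $\varphi\leq\tfrac12$, and the first bound for the exponents $k-1$ and $k+1$ yield $\|t^kD_{t}\psi\|\leq C\|\psi\|$. Similarly $P_{3}(t^k\psi)=\alpha^{-1}t^{k-1}D_{\varphi}\psi$, so $\|t^{k-1}D_{\varphi}\psi\|\leq C\alpha\|\psi\|$; replacing $k$ by $k+1$, which is legitimate because the statement is asserted for all $k\in\N$, gives $\|t^kD_{\varphi}\psi\|\leq C\alpha\|\psi\|$.

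I expect the only delicate point to be the first step: transporting the weighted $\sL^2$-estimate from $\Ca$ (stated in the Cartesian variable $z$) to $\Pc$ (in the variable $t$) while tracking the gauge phase, the Jacobian, the measure $\dhmu$ and, above all, checking that the decay rate $\eps_{1}$ and all constants can be taken independent of $\alpha\in(0,\alpha_{0})$ — which is exactly the uniformity built into Propositions \ref{Agmon} and \ref{pi/2} (and into Lemmas \ref{spectrum-ellipse}--\ref{spectrum-ellipse2}). Once the decay in $t$ is in hand, the rest is routine bookkeeping.
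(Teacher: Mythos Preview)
Your proposal is correct and follows precisely the route the paper leaves implicit: the corollary is stated without proof, simply as a consequence of Propositions \ref{Agmon} and \ref{pi/2}, and you have supplied the natural details --- transporting the weighted $\sL^2$ bound to the $(t,\theta,\varphi)$ coordinates via $\alpha^{1/2}z=t\cos(\alpha\varphi)\geq\tfrac12 t$, then applying the paper's own IMS identity (Lemma \ref{old-IMS}) with $a=f(t)$ to obtain $\Q(f\psi)=\lambda\|f\psi\|^2+\|f'\psi\|^2$, and finally reading off the $D_t$ and $D_\varphi$ bounds from $\|P_1(t^k\psi)\|$ and $\|P_3(t^{k+1}\psi)\|$. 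The cutoff-plus-lower-semicontinuity step and the index shift $k\mapsto k+1$ for the $D_\varphi$ estimate are handled cleanly; nothing is missing.
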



\section{Commutators and $(\theta,\varphi)$-averaging}\label{averaging}
 This section is devoted to the approximation of the eigenfunctions by their averages with respect to $\theta$.
In order to simplify the analysis let us rewrite $\L$, acting on $\sL^2(\Pc, \dhmu )$ in the following form
\begin{notation}
We will write:
$$\L=\mathcal{L}_{1}+\mathcal{L}_{2}+\mathcal{L}_{3},$$
with
\begin{eqnarray*}
\mathcal{L}_{1}&=&t^{-2}(D_{t}-A_{t})t^2(D_{t}-A_{t}),\\
\mathcal{L}_{2}&=&\frac{1}{t^2\sin^2(\alpha\varphi)}\left(D_{\theta}+A_{\theta,1}+A_{\theta,2}\right)^2,\\
\mathcal{L}_{3}&=&\frac{1}{\alpha^2 t^2\sin(\alpha\varphi)}D_{\varphi}\sin(\alpha\varphi) D_{\varphi},
\end{eqnarray*}
where
\begin{equation}
A_{t}=t\varphi\cos\theta\ \sin\beta,
\end{equation}
\begin{equation}
A_{\theta,1}=\frac{t^2}{2\alpha}\sin^2(\alpha\varphi)\cos\beta,\qquad 
A_{\theta,2}=\frac{t^2\varphi}{2} \left(1-\frac{\sin(2\alpha\varphi)}{2\alpha\varphi}\right)\sin\beta\ \sin\theta.
\end{equation}
We will use the corresponding quadratic forms:
$$\mathcal{Q}_{1}(\psi)=\int_{\Pc} |P_1\psi|^2\dhmu,\qquad 
\mathcal{Q}_{2}(\psi)=\int_{\Pc} |P_2\psi|^2 \dhmu,\qquad 
\mathcal{Q}_{3}(\psi)=\int_{\Pc} |P_3\psi|^2\dhmu,$$
where $P_{1}$, $P_{2}$ and $P_{3}$ are defined by:
$$P_{1}=D_{t}-A_{t},\qquad
 P_{2}=\frac{1}{t\sin(\alpha\varphi)}(D_{\theta}+A_{\theta,1}+A_{\theta,2}),\qquad 
 P_{3}=\frac1{\alpha t} D_{\varphi}.$$
\end{notation}
Let us recall the so-called \enquote{IMS} formula (see \cite{CFKS87}).
\begin{lemma}\label{old-IMS}
Let us  consider a smooth and real function $a$. As soon as each term is well defined, we have:
$$\Re\langle\L\psi, a a \psi \rangle=\Q(a\psi)-\sum_{j=1}^3 \| [a, P_{j}]\psi\|^2.$$
\end{lemma}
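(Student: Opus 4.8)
The plan is to exploit the divergence structure of $\L$. By the very definitions in the Notation box, $\mathcal{L}_{j}=P_{j}^{\star}P_{j}$ where $P_{j}^{\star}$ is the formal adjoint of $P_{j}$ in $\sL^2(\Pc,\dhmu)$, and $\Q(\psi)=\sum_{j=1}^{3}\|P_{j}\psi\|^2$. The whole identity is then a piece of algebra resting on one observation: since $a$ is real and each $P_{j}$ has the form (function)$\times(D_\bullet+\text{function})$, each commutator $[a,P_{j}]$ is the operator of multiplication by a function (namely $-D_{t}a$ for $j=1$, $-(t\sin\alpha\varphi)^{-1}D_{\theta}a$ for $j=2$, and $-(\alpha t)^{-1}D_{\varphi}a$ for $j=3$). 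In particular $[a,P_{j}]$ commutes with multiplication by $a$ and by $a^2$, and $[a^2,P_{j}]=2a\,[a,P_{j}]$.

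First I would write, using the divergence form together with one integration by parts (licit since every term is assumed well defined, i.e.\ $a\psi$ and $P_{j}(a\psi)$ lie in the relevant $\sL^2$-spaces),
$$\Re\langle\L\psi,a^2\psi\rangle=\sum_{j=1}^{3}\Re\langle P_{j}\psi,P_{j}(a^2\psi)\rangle.$$
Then I substitute $P_{j}(a^2\psi)=a^2P_{j}\psi-[a^2,P_{j}]\psi=a^2P_{j}\psi-2a[a,P_{j}]\psi$, and use that $a$ is real, to obtain termwise
$$\Re\langle P_{j}\psi,P_{j}(a^2\psi)\rangle=\|aP_{j}\psi\|^2-2\,\Re\langle aP_{j}\psi,[a,P_{j}]\psi\rangle.$$

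Next I would replace $aP_{j}\psi$ by $P_{j}(a\psi)+[a,P_{j}]\psi$ everywhere on the right-hand side, which is exactly the commutator identity with the term moved across. Expanding $\|aP_{j}\psi\|^2=\|P_{j}(a\psi)\|^2+2\,\Re\langle P_{j}(a\psi),[a,P_{j}]\psi\rangle+\|[a,P_{j}]\psi\|^2$ and $-2\,\Re\langle aP_{j}\psi,[a,P_{j}]\psi\rangle=-2\,\Re\langle P_{j}(a\psi),[a,P_{j}]\psi\rangle-2\|[a,P_{j}]\psi\|^2$, the cross terms cancel and there remains
$$\Re\langle P_{j}\psi,P_{j}(a^2\psi)\rangle=\|P_{j}(a\psi)\|^2-\|[a,P_{j}]\psi\|^2.$$
Summing over $j=1,2,3$ and using $\Q(a\psi)=\sum_{j}\|P_{j}(a\psi)\|^2$ gives the claimed formula.

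There is no genuine obstacle; the only points deserving a word of care are the justification of the integration by parts (covered by the hypothesis that each term is well defined) and the structural fact that $a$ being real forces each $[a,P_{j}]$ to be a multiplication operator, hence to commute with $a$ — this is precisely what makes the cross terms disappear after taking real parts.
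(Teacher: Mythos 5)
Your argument is correct, and it fills a gap the paper deliberately leaves: the authors state Lemma~\ref{old-IMS} as the classical ``IMS'' formula and simply cite \cite{CFKS87} rather than proving it. Your derivation --- start from $\Re\langle\L\psi,a^2\psi\rangle=\sum_j\Re\langle P_j\psi,P_j(a^2\psi)\rangle$, split $[a^2,P_j]=2a[a,P_j]$ because $[a,P_j]$ is multiplication by a function and hence commutes with $a$, then re-express $aP_j\psi=P_j(a\psi)+[a,P_j]\psi$ and watch the cross terms cancel under $\Re$ --- is the standard route and is carried out without errors. It is worth remarking that the paper \emph{does} prove the neighbouring Lemma~\ref{new-IMS}, and specialising that proof to $a$ real and bounded (so $a^*=a$, $[P_j,a^*]=[P_j,a]=-[a,P_j]$, and $[a,P_j]$ commutes with $a$) would also deliver Lemma~\ref{old-IMS}: the two extra sums collapse to $\sum_j\bigl(\langle[a,P_j]\psi,P_j(a\psi)\rangle-\langle P_j(a\psi),[a,P_j]\psi\rangle\bigr)-\sum_j\|[a,P_j]\psi\|^2$, whose first part is purely imaginary. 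That alternative is slightly more economical in the paper's own logic since Lemma~\ref{new-IMS} is proved anyway, but your direct computation is at least as clear and uses nothing beyond $\mathcal{L}_j=P_j^{\star}P_j$ and elementary commutator algebra.
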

We will also need a commutator formula in the spirit of \cite[Section 4.2]{Ray13} (see also \cite{Ray12} where the same commutators method appears).
\begin{lemma}\label{new-IMS}
Let $\psi$ be an eigenfunction for $\L$ associated with the eigenvalue $\lambda$. As soon as each term is well defined, we have the following relation
$$\lambda\|a\psi\|^2=\Q (a\psi)+\sum_{j=1}^3 \langle P_{j}\psi,[P_{j},a^*]a\psi\rangle+\sum_{j=1}^3\langle[a,P_{j}]\psi,P_{j}(a\psi)\rangle,$$
where $a$ is an unbounded operator.
\end{lemma}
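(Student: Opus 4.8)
The plan is to establish the identity in Lemma \ref{new-IMS} by a direct computation, starting from the eigenvalue equation $\L\psi=\lambda\psi$ and pairing it against $a^*a\psi$. Writing $\L=\sum_{j=1}^3 P_j^* P_j$ (the operators $P_j$ being the ones introduced in the Notation above, with $\L$ the associated Friedrichs extension), we have
$$\lambda\|a\psi\|^2=\lambda\langle\psi, a^*a\psi\rangle=\langle\L\psi, a^*a\psi\rangle=\sum_{j=1}^3\langle P_j^*P_j\psi, a^*a\psi\rangle=\sum_{j=1}^3\langle P_j\psi, P_j a^*a\psi\rangle.$$
The whole point is then to commute the operator $P_j$ past the product $a^*a$: I would write $P_j a^* a=a^* P_j a+[P_j,a^*]a$ and then $a^* P_j a=a^* a P_j + a^*[P_j,a]$. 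Substituting and regrouping, the term $\sum_j\langle P_j\psi, a^* a P_j\psi\rangle=\sum_j\langle a P_j\psi, a P_j\psi\rangle$; the catch is that this is \emph{not} quite $\Q(a\psi)=\sum_j\|P_j(a\psi)\|^2$, because $P_j(a\psi)=aP_j\psi+[P_j,a]\psi$. So I would instead organize the computation so that the $\Q(a\psi)$ term appears cleanly: expand $P_j(a\psi)=aP_j\psi+[P_j,a]\psi$ inside $\Q(a\psi)=\sum_j\langle P_j(a\psi),P_j(a\psi)\rangle$ and compare with the expansion above.

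Concretely, the key algebraic step is the identity, for each $j$,
$$\langle P_j\psi, P_j(a^*a\psi)\rangle=\langle P_j(a\psi),P_j(a\psi)\rangle+\langle P_j\psi,[P_j,a^*]a\psi\rangle+\langle[a,P_j]\psi,P_j(a\psi)\rangle,$$
which one checks by writing $P_j(a^*a\psi)=a^*P_j(a\psi)+[P_j,a^*](a\psi)$ for the first term on the right after moving one $a^*$ to the left slot, and $P_j(a\psi)=aP_j\psi+[P_j,a]\psi$ to handle the remaining pieces; the cross terms $\langle aP_j\psi,[P_j,a]\psi\rangle$ and its conjugate-type partner are exactly absorbed into the two commutator sums. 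Summing over $j$ and using $\sum_j\langle P_j\psi,P_j(a^*a\psi)\rangle=\lambda\|a\psi\|^2$ and $\sum_j\langle P_j(a\psi),P_j(a\psi)\rangle=\Q(a\psi)$ yields the claimed formula. This is essentially the same manipulation as in \cite[Section 4.2]{Ray13}, transcribed to the present operators.

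The main obstacle is not the algebra but the justification that every term is well defined and that the integrations by parts / adjoint manipulations are legitimate when $a$ is an unbounded operator (here $a$ will in practice be multiplication by a power of $t$, or $tD_t$, or similar) and $\psi$ only an $\sH^1_{\tilde\A}$ eigenfunction a priori. I would invoke the regularity and weighted estimates already available — Propositions \ref{H3} and Corollary \ref{lem.apppsi} — which guarantee that $\psi$ and its relevant derivatives decay with all the weights $t^k$ needed, so that $a\psi\in\Dom(\Q)$, the commutators $[P_j,a]\psi$, $[P_j,a^*]\psi$ lie in $\sL^2(\Pc,\dhmu)$, and no boundary terms arise at $t=0$, $t=\infty$ or $\varphi=0$ (the Neumann condition at $\varphi=1/2$ being built into $\Dom(\L)$). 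Given these a priori bounds the formula holds by a density/approximation argument (truncating $a$ if necessary and passing to the limit), which is why the statement is hedged with "as soon as each term is well defined."
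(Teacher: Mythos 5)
Your proof is correct and follows essentially the same route as the paper: start from $\lambda\|a\psi\|^2=\sum_j\langle P_j\psi,P_j a^*a\psi\rangle$, move $a^*$ across $P_j$, shift $a^*$ to the left slot, and convert $aP_j\psi$ into $P_j(a\psi)+[a,P_j]\psi$, which is precisely the key identity you state for each $j$. The paper is more terse (it labels the computation "formal" and defers all domain questions to the hypothesis "as soon as each term is well defined"), whereas you add the observation that Proposition~\ref{H3} and Corollary~\ref{lem.apppsi} supply the weighted regularity needed for the specific choices of $a$ used later; this is a reasonable and harmless elaboration, not a different argument.
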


\begin{proof}
Formally, we may write:
$$\langle\L\psi, a^*a \psi \rangle= \sum_{j=1}^3 \langle P_{j}\psi, P_{j} a^* a\psi\rangle.$$
Then, we have:
\begin{eqnarray*}
\sum_{j=1}^3 \langle P_{j}\psi, P_{j} a^* a\psi \rangle
&=& \sum_{j=1}^3 \langle P_{j}\psi, a^* P_{j} a\psi\rangle+\langle P_{j}\psi, [P_{j}, a^*] a\psi\rangle\\
&=& \sum_{j=1}^3 \langle a P_{j}\psi, P_{j} a\psi\rangle+\langle P_{j}\psi, [P_{j}, a^*] a\psi\rangle.
\end{eqnarray*}
We infer:
$$\sum_{j=1}^3 \langle P_{j}\psi, P_{j} a^* a\psi\rangle=\Q(a\psi)+\sum_{j=1}^3 \langle [a, P_{j}]\psi, P_{j} a\psi\rangle+\sum_{j=1}^3 \langle P_{j}\psi, [P_{j}, a^*] a\psi\rangle.$$
\end{proof}
\begin{remark}
For instance we can apply Lemma \ref{new-IMS} to $a=tD_{t}$ and $a=t\sin(\alpha\varphi)D_{t}$ thanks to Proposition \ref{H3}.
\end{remark}

\subsection{$\theta$-averaging of $t^k \psi$}

\begin{lemma}\label{app-t}
Let $k\geq 0$ and $C_{0}>0$. There exist $\alpha_{0}>0$ and $C>0$ such that for all $\alpha\in(0,\alpha_{0})$ and all eigenpair $(\lambda,\psi)$ of $\L$ such that $\lambda\leq C_{0}$:
$$\|t^{k} \psi-t^{k}\underline{\psi}_{\theta}\|\leq C\alpha^{1/2}\|\psi\|,$$
with $$\underline{\psi}_{\theta}(t,\varphi)=\frac{1}{2\pi}\int_{0}^{2\pi}\psi(t,\theta,\varphi)\dx\theta.$$
\end{lemma}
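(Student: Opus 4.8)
The plan is to exploit the penalization of $D_\theta^2$ coming from the term $\mathcal{L}_2$, combined with the accurate Agmon estimates of Corollary~\ref{lem.apppsi}. The starting point is to test the eigenvalue equation against a well-chosen function. Writing $\psi = \underline\psi_\theta + \psi^\perp$ where $\psi^\perp$ has zero average in $\theta$, the Poincar\'e inequality on the circle $[0,2\pi)$ gives $\|\psi^\perp\|^2_{\sL^2(\dx\theta)} \leq \|D_\theta \psi\|^2_{\sL^2(\dx\theta)}$ at fixed $(t,\varphi)$. The goal is therefore to prove $\|t^k D_\theta \psi\| \leq C\alpha^{1/2}\|\psi\|$ (or something slightly weaker that still closes the argument after absorbing the factor $(t^2\sin^2\alpha\varphi)^{-1} \gtrsim t^{-2}$), and then to control the difference $t^k\psi^\perp$ using that $D_\theta$ commutes with multiplication by $t^k$ and with the $\theta$-averaging projector.

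First I would apply Lemma~\ref{new-IMS} (or the simpler Lemma~\ref{old-IMS}) with a weight $a$ of the form $a = t^k$ or $a = t^k\sin(\alpha\varphi)$, which is legitimate by Proposition~\ref{H3}. This produces, after expanding $\Q(a\psi) = \mathcal{Q}_1(a\psi) + \mathcal{Q}_2(a\psi) + \mathcal{Q}_3(a\psi)$ and using $\lambda \leq C_0$, an upper bound of the form $\mathcal{Q}_2(t^k\psi) \leq C_0\|t^k\psi\|^2 + (\text{commutator terms})$. The key observation is that $\mathcal{Q}_2(t^k\psi) = \int_\Pc \frac{1}{t^2\sin^2\alpha\varphi}|(D_\theta + A_{\theta,1}+A_{\theta,2})(t^k\psi)|^2\dhmu$ and that $A_{\theta,1}, A_{\theta,2}$ are independent of $\theta$ in the sense relevant for extracting $D_\theta$; more precisely $D_\theta(t^k\psi) = t^k D_\theta\psi$ while the magnetic terms are $\theta$-dependent (through $\cos\theta$, $\sin\theta$) but bounded by $Ct^2$ on the support where the Agmon weight allows. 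One then decomposes $\|(D_\theta + A_{\theta,1}+A_{\theta,2})(t^k\psi)\|^2 \geq \frac12\|D_\theta(t^k\psi)\|^2 - \|(A_{\theta,1}+A_{\theta,2})(t^k\psi)\|^2$, and the second term, being $O(t^{2k+4})$, is controlled by $C\|\psi\|^2$ via Corollary~\ref{lem.apppsi} — crucially \emph{without} the small factor, so this alone is not enough and the cancellation must come from elsewhere. The cleaner route is to observe that $A_{\theta,1} = \frac{t^2}{2\alpha}\sin^2(\alpha\varphi)\cos\beta$, which after division by $t^2\sin^2(\alpha\varphi)$ gives a contribution of order $\alpha^{-1}$; but the product $\frac{1}{t^2\sin^2\alpha\varphi}\cdot\frac{1}{\alpha}$ structure is exactly what makes $D_\theta$ small, so I would keep $D_\theta + A_{\theta,1}$ together and estimate $\|\frac{1}{t\sin\alpha\varphi}(D_\theta + A_{\theta,1})(t^k\psi)\|$ directly from $\mathcal{Q}_2$, which after multiplying back by $t^2\sin^2\alpha\varphi \sim \alpha^2 t^2\varphi^2$ converts a bound of size $O(1)\|\psi\|^2$ on $\mathcal{Q}_2(t^k\psi)$ into a bound of size $O(\alpha^2)\|t^{k+1}(D_\theta + A_{\theta,1})\psi\|^2$ — wait, the sign goes the wrong way; rather $\mathcal{Q}_2(t^k\psi) \geq c\alpha^{-2}\|t^{k-1}\varphi^{-2}(D_\theta+A_{\theta,1})(t^k\psi)\|^2$ is false too. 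The correct bookkeeping: from $\mathcal{Q}_2(t^k\psi)\le C\|\psi\|^2$ and $\frac{1}{t^2\sin^2\alpha\varphi}\ge \frac{c}{\alpha^2 t^2}$ on $\varphi\in(0,1/2)$, one gets $\|\frac{1}{t}(D_\theta+A_{\theta,1}+A_{\theta,2})(t^k\psi)\|^2 \le C\alpha^2\|\psi\|^2$, hence $\|t^{k-1}(D_\theta+A_{\theta,1}+A_{\theta,2})\psi\| \le C\alpha\|\psi\|$. This is the mechanism.

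Having obtained $\|t^{k-1}(D_\theta + A_{\theta,1} + A_{\theta,2})\psi\| \le C\alpha\|\psi\|$, I would next isolate $D_\theta\psi$. Since $A_{\theta,1}$ is $\theta$-independent, $(D_\theta + A_{\theta,1})\psi = D_\theta\psi + A_{\theta,1}\psi$, and I apply the same bound to $t^k\psi$ with two different values of $k$, or I directly project onto the $\theta$-average: applying the operator $(\mathrm{Id} - \Pi_\theta)$ where $\Pi_\theta$ is averaging in $\theta$, and noting $D_\theta\Pi_\theta = 0$, $\Pi_\theta A_{\theta,1} = A_{\theta,1}$ (as $A_{\theta,1}$ is $\theta$-free) while $A_{\theta,2}$ and $A_t$-type terms have controlled $\theta$-oscillation, I extract $\|D_\theta(t^{k-1}\psi^\perp)\|$ up to lower order, then Poincar\'e on $[0,2\pi)$ yields $\|t^{k-1}\psi^\perp\| \le \|D_\theta(t^{k-1}\psi)\| + \ldots \le C\alpha\|\psi\|$. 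Reindexing $k-1 \leadsto k$ (the extra power of $t$ is harmless by Corollary~\ref{lem.apppsi}) gives $\|t^k\psi - t^k\underline\psi_\theta\| \le C\alpha\|\psi\|$, which is even better than the claimed $C\alpha^{1/2}$; the weaker exponent in the statement presumably absorbs some crude estimate of the $A_{\theta,2}$ and commutator contributions, where $A_{\theta,2} = O(\alpha^2 t^2\varphi^3)$ is in fact genuinely small.

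The main obstacle I anticipate is handling the magnetic phase terms $A_{\theta,1}$ and the commutators $[t^k, P_j]$ carefully enough that the small factor $\alpha^{1/2}$ (or $\alpha$) survives: the term $A_{\theta,1} = \frac{t^2}{2\alpha}\sin^2(\alpha\varphi)\cos\beta$ carries an explicit $\alpha^{-1}$, so naive estimates lose powers of $\alpha$, and one must use that it appears inside $\mathcal{L}_2$ divided by $t^2\sin^2(\alpha\varphi)$, which restores two powers of $\alpha$ and one of $t$, together with the Agmon decay $\|t^j\psi\| \le C\|\psi\|$ from Corollary~\ref{lem.apppsi} that tames the high powers of $t$ generated along the way. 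The commutator $[t^k, P_1] = [t^k, D_t] = -ik t^{k-1}$ and $[t^k, P_3] = [t^k, \frac{1}{\alpha t}D_\varphi] = 0$ are benign; $[t^k, P_2] = 0$ since $P_2$ involves no $t$-derivative. So the commutator terms in Lemma~\ref{old-IMS} reduce to $\|kt^{k-1}\psi\|^2 \le C\|\psi\|^2$, clean. The real care is in the cross term $\langle P_2(t^k\psi), (A_{\theta,1}+A_{\theta,2})t^k\psi\rangle$-type expressions where $A_{\theta,1}$ could a priori destroy the smallness; I would control it by Cauchy–Schwarz splitting off a small multiple of $\mathcal{Q}_2(t^k\psi)$ to be absorbed on the left, leaving only $\|t^{k+j}\psi\|^2$ terms bounded via Corollary~\ref{lem.apppsi}. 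This is exactly the "fine commutators computations" the introduction warns about, and it is where the bulk of the work lies.
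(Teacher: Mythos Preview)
Your core mechanism is correct and in fact cleaner than the paper's, and it yields a sharper bound. The paper applies the IMS formula (Lemma~\ref{old-IMS}) with weight $a=t^{k+1}\sin(\alpha\varphi)$, chosen so that $P_2(a\psi)=t^k(D_\theta+A_{\theta,1}+A_{\theta,2})\psi$ directly; the cost is a non-small commutator $[P_3,a]=-it^k\cos(\alpha\varphi)$, which the paper then cancels against the leading part of $\mathcal{Q}_3(a\psi)$ using $\|t^kD_\varphi\psi\|\le C\alpha\|\psi\|$ from Corollary~\ref{lem.apppsi}. That cancellation only produces $\|t^k(D_\theta+A_\theta)\psi\|^2\le C\alpha\|\psi\|^2$, whence the stated rate $\alpha^{1/2}$. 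You instead keep the plain weight $a=t^{k+1}$, for which $[P_2,a]=[P_3,a]=0$ and the bound $\Q(t^{k+1}\psi)\le C\|\psi\|^2$ is already contained in Corollary~\ref{lem.apppsi}; the smallness is then extracted from the pointwise lower bound $\sin^{-2}(\alpha\varphi)\ge 4\alpha^{-2}$ inside $\mathcal{Q}_2$, giving $\|t^k(D_\theta+A_{\theta,1}+A_{\theta,2})\psi\|\le C\alpha\|\psi\|$, a full half-power of $\alpha$ better. One clarification on your final paragraph: the worry about the explicit $\alpha^{-1}$ in $A_{\theta,1}$ is a red herring, since $\sin^2(\alpha\varphi)\le\alpha^2/4$ makes $|A_{\theta,1}|\le C\alpha t^2$ pointwise; hence $\|t^kA_{\theta,1}\psi\|\le C\alpha\|t^{k+2}\psi\|\le C\alpha\|\psi\|$ directly by Corollary~\ref{lem.apppsi}, and the triangle inequality followed by Poincar\'e on the circle closes the argument with no further subtlety. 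The same pointwise-bound trick, fed with $\Q(tD_t\psi)\le C\|\psi\|^2$ from Lemma~\ref{bound-on-tDt}, would similarly streamline the proof of Lemma~\ref{app-Dt}.
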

\begin{proof}
Let us apply Lemma \ref{old-IMS} with $a=t^{k+1}\sin(\alpha\varphi)$. We get:
$$\Q(t^{k+1}\sin(\alpha\varphi)\psi)=\lambda\|t^{k+1}\sin(\alpha\varphi)\psi\|^2+\|[P_{1},t^{k+1}\sin(\alpha\varphi)]\psi\|^2+\|[P_{3},t^{k+1}\sin(\alpha\varphi)]\psi\|^2.$$
Since $[P_{1},a]=-i(k+1)t^k\sin(\alpha\varphi)$ and $[P_{3},a]=-it^{k}\cos(\alpha\varphi)$, we deduce, using Corollary~\ref{lem.apppsi}, that:
\begin{equation}\label{Q-tksin}
\Q(t^{k+1}\sin(\alpha\varphi)\psi)\leq C \alpha^2\|\psi\|^2+\|t^{k}\cos(\alpha\varphi)\psi\|^2.
\end{equation}
We notice that:
\begin{eqnarray*}
\|P_{3}(t^{k+1}\sin(\alpha\varphi)\psi)\|^2
&=&\frac1{\alpha^{2}}\int_{\Pc} |t^{k} D_{\varphi}(\sin(\alpha\varphi)\psi)|^2 \dhmu\\
&=&\frac1{\alpha^{2}}\int_{\Pc} t^{2k}\left|-i\alpha\cos(\alpha\varphi)\psi+\sin(\alpha\varphi)D_{\varphi}\psi\right|^2\dhmu\\
&\geq& \|t^{k}\cos(\alpha\varphi)\psi\|^2
+\frac2{\alpha^{2}}\Re\left(\int_{\Pc} -i\alpha t^{2k} \cos(\alpha\varphi)\psi \sin(\alpha\varphi)\overline{D_{\varphi}\psi}\dhmu\right).
\end{eqnarray*}
We infer that:
\begin{equation}\label{eq.P3}
\|P_{3}(t^{k+1}\sin(\alpha\varphi))\|^2-\|t^{k}\cos(\alpha\varphi)\psi\|^2\geq -C\|t^{k} \psi\|\|t^kD_{\varphi}\psi\| .
\end{equation}
It follows from \eqref{Q-tksin}, \eqref{eq.P3} and Corollary~\ref{lem.apppsi} that:
$$\mathcal{Q}_{2}(t^{k+1}\sin(\alpha\varphi)\psi)\leq C\alpha\|\psi\|^2.$$
We deduce that : 
$$\int_{\Pc} |(D_{\theta}+A_{\theta,1}+A_{\theta,2})(t^{k}\psi)|^2\dhmu\leq C\alpha\|\psi\|^2,$$
and we get:
$$\frac{1}{2}\int_{\Pc} |D_{\theta}(t^{k}\psi)|^2\dhmu-2\int_{\Pc} |(A_{\theta,1}+A_{\theta,2})t^{k}\psi|^2 \dhmu\leq  C\alpha\|\psi\|^2.$$
Using Corollary \ref{lem.apppsi}, we obtain
$$\int_{\Pc} |(A_{\theta,1}+A_{\theta,2})t^{k}\psi|^2 \dhmu
\leq C \alpha^2\int_{\Pc}t^{2k+4}|\psi|^2\dhmu
\leq C\alpha^2\|\psi\|^2.$$
Therefore we have:
\begin{equation}\label{eq.Dtheta}
\|D_{\theta}(t^{k}\psi)\|^2\leq C\alpha\|\psi\|^2.
\end{equation}
Let us consider $D_{\theta}^2$ on $\sL^2((0,2\pi),\dx\theta)$ (with periodic boundary conditions). The first eigenvalue is simple and equal to 0 and the associated eigenspace is generated by $1$. 
The function  $t^{k} \psi-t^{k}\underline{\psi}_{\theta}$ is orthogonal to $1$. Then, due to the min-max principle, we have
$$\|D_{\theta}(t^{k}\psi)\|^2\geq c_{1}\|t^{k} \psi-t^{k}\underline{\psi}_{\theta}\|^2,$$
with $c_{1}>0$. This last inequality combined with \eqref{eq.Dtheta} completes the proof. 
\end{proof}

\subsection{$\theta$-averaging of $D_{t}\psi$}

\subsubsection{Estimate of $tD_{t}\psi$}

\begin{lemma}\label{bound-on-tDt}
Let $C_{0}>0$. There exist $\alpha_{0}>0$ and $C>0$ such that for all $\alpha\in(0,\alpha_{0})$ and all eigenpair $(\lambda,\psi)$ of $\L$ such that $\lambda\leq C_{0}$:
$$\Q(tD_{t}\psi)\leq C\|\psi\|^2.$$
\end{lemma}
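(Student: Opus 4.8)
The plan is to apply the commutator identity of Lemma \ref{new-IMS} with the unbounded operator $a = tD_t$ (which is legitimate by Proposition \ref{H3}, as noted in the remark after Lemma \ref{new-IMS}). Writing $a^* = D_t t = tD_t - i$, we get
$$\lambda\|tD_t\psi\|^2 = \Q(tD_t\psi) + \sum_{j=1}^3\langle P_j\psi, [P_j, a^*]\,a\psi\rangle + \sum_{j=1}^3\langle [a,P_j]\psi, P_j(a\psi)\rangle,$$
so that $\Q(tD_t\psi) \le \lambda\|tD_t\psi\|^2 + \sum_j \big(|\langle P_j\psi, [P_j,a^*]a\psi\rangle| + |\langle [a,P_j]\psi, P_j(tD_t\psi)\rangle|\big)$. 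First I would bound $\|tD_t\psi\|$ itself: from Corollary \ref{lem.apppsi} we already have $\|t^k D_t\psi\| \le C\|\psi\|$ for all $k$, so $\lambda\|tD_t\psi\|^2 \le C_0 C\|\psi\|^2$. It remains to control the four commutator-type terms and then absorb $\Q(tD_t\psi)$-dependent pieces on the left if necessary.

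The key step is the explicit computation of the commutators $[a, P_j]$ and $[P_j, a^*]$ for $a = tD_t$. Since $P_1 = D_t - A_t$ with $A_t = t\varphi\cos\theta\sin\beta$, one has $[tD_t, D_t] = i D_t$ and $[tD_t, A_t] = -i A_t$ (because $A_t$ is homogeneous of degree $1$ in $t$), hence $[a, P_1] = iD_t - (-i A_t)\cdot(\text{sign})$, i.e. a first-order-in-$t$ operator of the same type as $P_1$; similarly $[P_1, a^*]$ is of comparable size. For $P_2 = (t\sin\alpha\varphi)^{-1}(D_\theta + A_{\theta,1} + A_{\theta,2})$: the prefactor is homogeneous of degree $-1$ in $t$ while $A_{\theta,1}, A_{\theta,2}$ are homogeneous of degree $2$, so $[tD_t, P_2]$ produces terms like $P_2$ itself plus $(t\sin\alpha\varphi)^{-1}\cdot t^2(\cdots)$ contributions that are controlled by Corollary \ref{lem.apppsi} and the already-established bound $\mathcal{Q}_2(\psi)\le C\|\psi\|^2$. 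For $P_3 = (\alpha t)^{-1}D_\varphi$, one has $[tD_t, P_3] = -P_3$ up to constants, and the resulting term is estimated using $\|t^k D_\varphi\psi\| \le C\alpha\|\psi\|$ from Corollary \ref{lem.apppsi}. In each case the strategy is: express the commutator in terms of the $P_j$'s and of multiplication operators $t^k$, apply Cauchy--Schwarz, and use Corollary \ref{lem.apppsi} together with $\Q(\psi) \le C\|\psi\|^2$ to bound everything by $C\|\psi\|^2$, except for one or two terms involving $P_j(tD_t\psi)$ which are bounded by $\varepsilon\,\Q(tD_t\psi) + C_\varepsilon\|\psi\|^2$ via Young's inequality and absorbed on the left-hand side.

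I expect the main obstacle to be the $\mathcal{L}_2$-contribution: the operator $P_2$ has the singular weight $(t\sin\alpha\varphi)^{-1}$, and commuting $tD_t$ through it creates terms where this weight is multiplied against the large potentials $A_{\theta,1} \sim t^2\alpha^{-1}\sin^2\alpha\varphi\cos\beta$ and $A_{\theta,2} \sim t^2\varphi\sin\beta\sin\theta$. One must check carefully that after the commutation these combine into quantities of the form $(t\sin\alpha\varphi)^{-1}t^2(\cdots)\psi$ whose $\sL^2$-norm is $O(\|\psi\|)$ — this uses that $\sin\alpha\varphi \sim \alpha\varphi$ and $\varphi \in (0,\tfrac12)$, so the apparent $\alpha^{-1}$ blow-up in $A_{\theta,1}$ is killed by the $\sin\alpha\varphi$ in the prefactor once one term of $\sin\alpha\varphi$ is distributed, leaving a bounded expression in $t$ times $\cos\beta$; the weighted bounds $\|t^k\psi\|\le C\|\psi\|$ then finish the estimate. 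A secondary technical point is justifying that all the integrations by parts are legitimate, i.e. that the boundary terms at $\varphi = 0, \tfrac12$ and at $t=0,\infty$ vanish — this follows from the Neumann condition, the $\sH^k$-regularity of Proposition \ref{H3}, and the exponential decay of Propositions \ref{Agmon}--\ref{pi/2}. Once all terms are shown to be $O(\|\psi\|^2)$ (after absorbing the $\varepsilon\,\Q(tD_t\psi)$ pieces), the inequality $\Q(tD_t\psi) \le C\|\psi\|^2$ follows.
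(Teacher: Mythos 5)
Your proposal follows essentially the same route as the paper: apply the commutator identity of Lemma~\ref{new-IMS} with $a=tD_t$, compute $[P_j,tD_t]$ explicitly and observe that each commutator is $\frac{1}{i}P_j$ plus a multiplication operator controlled by Corollary~\ref{lem.apppsi}, then absorb the $\|P_j(tD_t\psi)\|$ factors via Young's inequality. The one slip is the adjoint: on $\sL^2(\Pc,\dhmu)$ with $\dhmu=t^2\sin(\alpha\varphi)\dx t\dx\theta\dx\varphi$ one has $a^*=tD_t+\tfrac{3}{i}$ (not $tD_t-i$, which is the flat-measure adjoint), but since $a^*-a$ is in any case a constant and therefore commutes with each $P_j$, this error does not affect the argument.
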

\begin{proof}
Let $a=tD_{t}$. We have $a^*=tD_{t}+\frac{3}{i}$. 
Since $3/i$ commutes with $P_{j}$, we have immediately
$$[P_{j},a^*]=[P_{j},a].$$
Lemma \ref{new-IMS} provides:
$$\Q(tD_{t}\psi)=\lambda\| tD_{t}\psi\|^2-\sum_{j=1}^3\langle P_{j}\psi,[P_{j},tD_{t}]a\psi\rangle-\sum_{j=1}^3\langle [tD_{t},P_{j}]\psi,P_{j}(a\psi)\rangle.$$
Let us compute $[P_{j},tD_{t}]$. We have:
\begin{eqnarray*}
{[P_{1},tD_{t}]} &=&\frac{1}{i}(D_{t}+A_{t})= \frac{1}{i}P_{1}+\frac{2}{i} A_{t}, \\
{[P_{2},tD_{t}]} &=&\frac{1}{it\sin(\alpha\varphi)}(D_{\theta}-A_{\theta,2})=\frac{1}{i}P_{2}-\frac{2}{it\sin(\alpha\varphi)}(A_{\theta,1}+A_{\theta,2}),\\
{[P_{3},tD_{t}]} &=&\frac{1}{i}P_{3}.
\end{eqnarray*}
We infer with Corollary~\ref{lem.apppsi}:
\begin{eqnarray*}
\Q(tD_{t}\psi)&\leq& C\|\psi\|^2+\|P_{1}\psi\|(\|P_{1}a\psi\|+\|2A_{t} a\psi\|)+\|P_{1}(a\psi)\|(\|P_{1}\psi\|+\|2A_{t}\psi\|)\\
&& +\|P_{2}\psi\|\left(\|P_{2}a\psi\|+\left\|\frac{2}{t\sin(\alpha\varphi)}(A_{\theta,1}+A_{\theta,2})a\psi\right\|\right)\\
&& +\|P_{2}a\psi\|\left(\|P_{2}\psi\|+\left\|\frac{2}{t\sin(\alpha\varphi)}(A_{\theta,1}+A_{\theta,2})\psi\right\|\right)+2\|P_{3}\psi\|\|P_{3}a\psi\|.
\end{eqnarray*}
The estimates of Corollary~\ref{lem.apppsi} imply:
$$\Q(tD_{t}\psi)\leq C\|\psi\|^2+C\|\psi\|(\|P_{1}a\psi\|+\|P_{2}a\psi\|+\|P_{3}a\psi\|).$$
It follows that for all $\eps>0$, we have:
$$\Q(tD_{t}\psi)\leq C\|\psi\|^2+\frac C2 \left(\eps^{-1}\|\psi\|^2+\eps\Q(tD_{t}\psi)\right).$$
For $\eps=\frac{1}{C}$, we get:
$$\frac{1}{2}\Q(tD_{t}\psi)\leq \tilde C\|\psi\|^2.$$
\end{proof}

\subsubsection{$\theta$-averaging of $D_{t}\psi$}
This subsection concerns the approximation of $D_{t}\psi$ by its average with respect to $\theta$.
\begin{lemma}\label{app-Dt}
Let $C_{0}>0$. There exist $\alpha_{0}>0$ and $C>0$ such that for all $\alpha\in(0,\alpha_{0})$ and all eigenpair $(\lambda,\psi)$ of $\L$ such that $\lambda\leq C_{0}$, we have:
$$\|D_{t}\psi-D_{t}\underline{\psi}_{\theta}\|\leq C\alpha^{1/2}\|\psi\|.$$
\end{lemma}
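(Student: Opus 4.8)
The plan is to mimic the structure of Lemma~\ref{app-t} but applied to the function $D_{t}\psi$ rather than to $t^{k}\psi$. The key point is that, by Lemma~\ref{bound-on-tDt}, the function $\chi:=tD_{t}\psi$ already satisfies a good energy bound, $\Q(tD_{t}\psi)\leq C\|\psi\|^2$, and in particular $\mathcal{Q}_{2}(tD_{t}\psi)\leq C\|\psi\|^2$. However, for the present statement we need a gain of a power $\alpha^{1/2}$, so a straightforward application of the min-max trick as in Lemma~\ref{app-t} is not immediately enough; one has to squeeze out the extra factor $\alpha^{1/2}$ by being more careful. First I would rewrite $\mathcal{Q}_{2}(tD_{t}\psi)$ using the commutator identity (Lemma~\ref{new-IMS}) applied to the unbounded operator $a=t\sin(\alpha\varphi)D_{t}$: this is legitimate by Proposition~\ref{H3} and the remark following Lemma~\ref{new-IMS}. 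Expanding the commutators $[P_{j},a]$ and $[P_{j},a^{*}]$ as in the proof of Lemma~\ref{bound-on-tDt} (noting $a^{*}=t\sin(\alpha\varphi)D_{t}+\frac{1}{i}(3\sin(\alpha\varphi)+\alpha\varphi\cos(\alpha\varphi))$, which commutes with $P_{j}$ only up to terms controlled by Corollary~\ref{lem.apppsi}), all the commutator contributions turn out to be $\mathcal{O}(\alpha)$ by Corollaries~\ref{lem.apppsi} and Lemma~\ref{bound-on-tDt} — the crucial mechanism being that each commutator either produces a factor $\alpha$ (from differentiating $\sin(\alpha\varphi)$) or pairs $P_{3}$ against something of size $\mathcal{O}(\alpha)$, or involves one of the magnetic terms $A_{t},A_{\theta,1},A_{\theta,2}$, each of which carries a power of $\alpha$ or of $t$ absorbed by the exponential decay.

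The second step is the reduction to $D_{\theta}$. From $\mathcal{Q}_{2}(t\sin(\alpha\varphi)D_{t}\psi)\leq C\alpha\|\psi\|^{2}$, i.e.
$$\int_{\Pc}\left|(D_{\theta}+A_{\theta,1}+A_{\theta,2})(tD_{t}\psi)\right|^{2}\dhmu\leq C\alpha\|\psi\|^{2},$$
one uses the elementary inequality $|D_{\theta}u|^{2}\leq 2|(D_{\theta}+A_{\theta,1}+A_{\theta,2})u|^{2}+2|(A_{\theta,1}+A_{\theta,2})u|^{2}$ together with the bounds $A_{\theta,1}+A_{\theta,2}=\mathcal{O}(\alpha t^{2})$ and Corollary~\ref{lem.apppsi}/Lemma~\ref{bound-on-tDt} (applied with a shifted power of $t$, so one also needs $\|t^{k}D_{t}\psi\|\leq C\|\psi\|$, which follows from Corollary~\ref{lem.apppsi}) to get $\|(A_{\theta,1}+A_{\theta,2})tD_{t}\psi\|\leq C\alpha\|\psi\|$. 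Hence $\|D_{\theta}(tD_{t}\psi)\|^{2}\leq C\alpha\|\psi\|^{2}$. Now invoke the spectral gap of $D_{\theta}^{2}$ on $\sL^{2}((0,2\pi),\dx\theta)$ exactly as at the end of the proof of Lemma~\ref{app-t}: since $tD_{t}\psi-\overline{(tD_{t}\psi)}_{\theta}$ is orthogonal to the constants in $\theta$, the min-max principle gives
$$\|tD_{t}\psi-tD_{t}\underline{\psi}_{\theta}\|^{2}\leq c_{1}^{-1}\|D_{\theta}(tD_{t}\psi)\|^{2}\leq C\alpha\|\psi\|^{2},$$
using that averaging in $\theta$ commutes with $tD_{t}$ so that $\overline{(tD_{t}\psi)}_{\theta}=tD_{t}\underline{\psi}_{\theta}$. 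Finally, to pass from $tD_{t}\psi$ to $D_{t}\psi$ one localizes: on the region $t\geq 1$ the factor $t^{-1}$ is harmless and gives directly $\|\mathbf{1}_{t\geq1}(D_{t}\psi-D_{t}\underline{\psi}_{\theta})\|\leq C\alpha^{1/2}\|\psi\|$, while on $t\leq 1$ one argues more directly — e.g. by applying Lemma~\ref{old-IMS} with $a=t^{2}\sin(\alpha\varphi)$ or by a Poincaré-type estimate in $\theta$ on $D_{t}\psi$ itself, controlling $\|D_{\theta}D_{t}\psi\|$ on $t\leq1$ using that $D_{\theta}$ and $D_{t}$ commute and $\|D_{\theta}\psi\|$, $\|D_{\theta}(t^{k}\psi)\|$ are already $\mathcal{O}(\alpha^{1/2})$ by Lemma~\ref{app-t}, combined with Lemma~\ref{bound-on-tDt}.

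The main obstacle I expect is the bookkeeping in the first step: showing that \emph{every} commutator term arising from $\mathcal{Q}_{2}(t\sin(\alpha\varphi)D_{t}\psi)$ via Lemma~\ref{new-IMS} is genuinely $\mathcal{O}(\alpha)$ and not merely $\mathcal{O}(1)$. This requires carefully tracking which terms gain their smallness from differentiating $\sin(\alpha\varphi)$ (explicit factor $\alpha$), which from the structure of $P_{3}=\frac{1}{\alpha t}D_{\varphi}$ paired against $D_{\varphi}\psi=\mathcal{O}(\alpha)$ (Corollary~\ref{lem.apppsi}), and which from the magnetic potentials $A_{t},A_{\theta,1},A_{\theta,2}$; the Agmon decay in $t$ (Corollary~\ref{lem.apppsi}) is what makes the polynomially-growing-in-$t$ coefficients admissible, and one must also use Lemma~\ref{bound-on-tDt} to control the ``second-generation'' quantities $\|P_{j}(tD_{t}\psi)\|$ that appear. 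The $t\leq1$ versus $t\geq1$ splitting at the end is routine once Lemma~\ref{app-t} is in hand.
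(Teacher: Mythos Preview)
Your overall architecture --- take $a=t\sin(\alpha\varphi)D_{t}$, apply Lemma~\ref{new-IMS}, and extract $\mathcal{Q}_{2}(a\psi)\leq C\alpha\|\psi\|^{2}$ --- is exactly the paper's route. But the assertion that ``all the commutator contributions turn out to be $\mathcal{O}(\alpha)$'' is false, and the mechanisms you list do not cover the worst term. Since $P_{3}=\frac{1}{\alpha t}D_{\varphi}$, differentiating $\sin(\alpha\varphi)$ through $P_{3}$ does \emph{not} gain a factor $\alpha$: one finds $[P_{3},a]=-i\sin(\alpha\varphi)P_{3}-i\cos(\alpha\varphi)D_{t}$, and the piece $-i\cos(\alpha\varphi)D_{t}$ is of order $1$. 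In the term $\langle[a,P_{3}]\psi,\,P_{3}(a\psi)\rangle$ this produces exactly $-\|\cos(\alpha\varphi)D_{t}\psi\|^{2}$, which is $O(1)$, not $O(\alpha)$. The paper does not bound this term; it instead observes that $\mathcal{Q}_{3}(a\psi)=\|P_{3}(t\sin(\alpha\varphi)D_{t}\psi)\|^{2}$ contains the \emph{same} quantity $\|\cos(\alpha\varphi)D_{t}\psi\|^{2}$ with the opposite sign, and that the cross term is controlled by $\|D_{t}\psi\|\,\|D_{\varphi}D_{t}\psi\|=O(\alpha)\|\psi\|^{2}$ via Lemma~\ref{bound-on-tDt} (which gives $\|D_{\varphi}D_{t}\psi\|=\alpha\|P_{3}(tD_{t}\psi)\|\leq C\alpha\|\psi\|$). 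Only after exhibiting this cancellation, namely $\mathcal{Q}_{3}(a\psi)-\|\cos(\alpha\varphi)D_{t}\psi\|^{2}\geq -C\alpha\|\psi\|^{2}$, does one obtain $\mathcal{Q}_{2}(a\psi)\leq C\alpha\|\psi\|^{2}$. This cancellation is the missing idea in your first step.

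There is also a computational slip that generates your unnecessary final step. Because $P_{2}=\frac{1}{t\sin(\alpha\varphi)}(D_{\theta}+A_{\theta,1}+A_{\theta,2})$ and $t\sin(\alpha\varphi)$ is independent of $\theta$, the prefactor cancels and $\mathcal{Q}_{2}(t\sin(\alpha\varphi)D_{t}\psi)=\int_{\Pc}|(D_{\theta}+A_{\theta,1}+A_{\theta,2})D_{t}\psi|^{2}\dhmu$: the argument is $D_{t}\psi$, not $tD_{t}\psi$. Hence the spectral-gap step yields $\|D_{t}\psi-D_{t}\underline{\psi}_{\theta}\|\leq C\alpha^{1/2}\|\psi\|$ directly, and the $t\leq 1$ versus $t\geq 1$ splitting is not needed (which is fortunate, since your sketch for $t\leq 1$ would require a bound on $\|D_{t}D_{\theta}\psi\|$ that does not follow from $\|D_{\theta}\psi\|=O(\alpha^{1/2})$ alone). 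Minor point: your formula for $a^{*}$ has a spurious $\alpha\varphi\cos(\alpha\varphi)$ term; the weight $\sin(\alpha\varphi)$ in $\dhmu$ affects the adjoint of $D_{\varphi}$, not of $D_{t}$, so $a^{*}=a+\tfrac{3}{i}\sin(\alpha\varphi)$.
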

\begin{proof}
Taking $a=t\sin(\alpha\varphi)D_{t}$, we have $a^*=t\sin(\alpha\varphi)D_{t}+\frac{3}{i}\sin(\alpha\varphi)=a+\frac 3i\sin(\alpha\varphi)$. 
Applying Lemma~\ref{new-IMS}, we have the relation
\begin{equation}\label{eq.Dtpsi}
\lambda\|a\psi\|^2=\Q(a\psi)+\sum_{j=1}^3\langle P_{j}\psi,[P_{j},a^*](a\psi)\rangle+\sum_{j=1}^3\langle[a,P_{j}]\psi,P_{j}(a\psi)\rangle.
\end{equation}
Now we have to compute the commutators $[P_{j},a]$ and $[P_{j},\sin\alpha\varphi]$. For $j\neq 3$, we have $[P_{j},\sin\alpha\varphi]=0$. Moreover we have: 
\begin{eqnarray*}
{[P_{1},a]=[P_{1},a^*]} &=&\sin(\alpha\varphi)[P_{1},tD_{t}]=\frac{\sin(\alpha\varphi)}{i}\left(P_{1}+2A_{t}\right),\\
{ [P_{2},a]}={[P_{2},a^*]} &=& \sin(\alpha\varphi)[P_{2},tD_{t}]=\frac{\sin(\alpha\varphi)}{i}\left(P_{2}-\frac{2}{t\sin(\alpha\varphi)}(A_{\theta,1}+A_{\theta,2})\right),\\
{[P_{3},a]} &=& -i\sin\alpha\varphi P_{3}-i\cos(\alpha\varphi)D_{t},\\
{[P_{3},a^*]} &=& [P_{3},a]+\frac3i[P_{3},\sin\alpha\varphi]=
-i\sin\alpha\varphi P_{3}-i\cos(\alpha\varphi)D_{t}-3\frac{\cos\alpha\varphi}{t}.
\end{eqnarray*}
The expressions of the commutators, Corollary~\ref{lem.apppsi} and Lemma \ref{bound-on-tDt} imply
\begin{multline}\label{Reste1}
\left| \langle P_{1}\psi,[P_{1},a^*]a\psi\rangle+\langle[a,P_{1}]\psi,P_{1}(a\psi)\rangle\right|\\
\leq C\alpha^2\left(\|P_{1}\psi\|\|(P_{1}+2A_{t})tD_{t}\psi\|+\|(P_{1}+2A_{t})\psi\|\|P_{1} tD_{t}\psi\|\right)\\
\leq C\alpha^2\|\psi\|^2,
\end{multline}
\begin{multline}\label{Reste2}
\left| \langle P_{2}\psi,[P_{2},a^*]a\psi\rangle+\langle[a,P_{2}]\psi, P_{2}(a\psi)\rangle\right|\\
\leq  C\alpha^2\|P_{2}\psi\|\left\| \Big(P_{2}-\frac{2(A_{\theta,1}+A_{\theta,2})}{t\sin(\alpha\varphi)}\Big) tD_{t}\psi  \right\|+ C\alpha^2\|P_{2}tD_{t}\psi\|\left\| \Big(P_{2}-\frac{2(A_{\theta,1}+A_{\theta,2})}{t\sin(\alpha\varphi)}\Big) \psi  \right\|\\
\leq C\alpha\|P_{3}(a\psi)\|^2+C\alpha\|\psi\|^2.
\end{multline}

We have 
$$\left| \langle P_{3}\psi,[P_{3},a^*]a\psi\rangle\right|\leq C\alpha\|P_{3}\psi\|\|P_{3} a\psi\|+C\alpha\|P_{3}\psi\|\|D_{t}(tD_{t}\psi)\|+C\alpha \|P_{3}\psi\|\|D_{t}\psi\|.$$
Corollary~\ref{lem.apppsi} and Lemma \ref{bound-on-tDt} provide
\begin{equation}\label{Reste3}
\left| \langle P_{3}\psi,[P_{3},a^*]a\psi\rangle\right|\leq C\alpha\|\psi\|^2.
\end{equation}
Then, we have a last commutator term to analyze:
$$\langle[a,P_{3}]\psi, P_{3} a\psi\rangle=\langle i\sin(\alpha\varphi)P_{3}\psi, P_{3}a\psi\rangle+\langle i\cos(\alpha\varphi)D_{t}\psi, P_{3} a\psi\rangle.$$
We notice that:
$$\langle i\cos(\alpha\varphi)D_{t}\psi, P_{3} (t\sin\alpha\varphi D_{t})\psi\rangle=\langle i\cos(\alpha\varphi)D_{t}\psi, \sin(\alpha\varphi) P_{3} tD_{t}\psi\rangle-\|\cos\alpha\varphi D_{t}\psi\|^2.$$
We deduce (with Corollary~\ref{lem.apppsi} and Lemma \ref{bound-on-tDt})
\begin{eqnarray}
\left|\langle[a,P_{3}]\psi, P_{3} a\psi)\rangle+\|\cos\alpha\varphi D_{t}\psi\|^2\right|
&\leq& C\alpha\|\psi\|\|P_{3}tD_{t}\psi\|+C\alpha\|D_{t}\psi\|\|P_{3}tD_{t}\psi\|\nonumber\\
&\leq& C\alpha\|\psi\|^2.\label{Reste4}
\end{eqnarray}
Using \eqref{eq.Dtpsi} and the estimates \eqref{Reste1}, \eqref{Reste2}, \eqref{Reste3} and \eqref{Reste4}, we get
$$\sum_{j=1}^3 \mathcal{Q}_{j}(t\sin(\alpha\varphi)D_{t}\psi)-\|\cos(\alpha\varphi)D_{t}\psi\|^{2}\leq C\alpha\|\psi\|^2.$$
Let us estimate
\begin{multline*}
\mathcal{Q}_{3}(t\sin(\alpha\varphi)D_{t}\psi)-\|\cos\alpha\varphi D_{t}\psi\|^2
=\alpha^{-2}\int_{\Pc} |D_{\varphi}\sin(\alpha\varphi) D_{t}\psi|^2 \dhmu-\|\cos\alpha\varphi D_{t}\psi\|^2\\
\geq\frac 2\alpha\int_{\Pc}\Re \left(-i\cos(\alpha\varphi)\sin(\alpha\varphi)D_{t}\psi \overline{D_{\varphi}D_{t}\psi}\right)
\dhmu,
\end{multline*} 
Therefore, we infer, with Corollary~\ref{lem.apppsi}:
$$\mathcal{Q}_{3}(t\sin(\alpha\varphi)D_{t}\psi)-\|\cos\alpha\varphi D_{t}\psi\|^2\geq-C\|D_{t}\psi\|\|D_{\varphi}D_{t}\psi\|\geq -c\alpha\|\psi\|^2.$$
We deduce that:
$$\mathcal{Q}_{2}(t\sin(\alpha\varphi)D_{t}\psi)\leq C\alpha\|\psi\|^2$$
and thus:
$$\int_{\Pc} |(D_{\theta}+A_{\theta,1}+A_{\theta,2})D_{t}\psi|^2 \dhmu\leq C\alpha\|\psi\|^2.$$
The conclusion goes along the same lines as in the proof of Lemma \ref{app-t} (by using also Corollary \ref{lem.apppsi}).
\end{proof}

\section{Reduction to an axisymmetric electro-magnetic Laplacian}\label{Sec.comparison}

\begin{lemma}\label{app-phi}
There exist $\alpha_{0}>0$ and $C>0$ such that for all $\alpha\in(0,\alpha_{0})$ and all eigenpair $(\lambda,\psi)$ of $\L$ such that $\lambda\leq C_{0}$:
$$\|t\psi-t\underline{\psi}_{\varphi}\|\leq C\alpha\|\psi\|.$$
\end{lemma}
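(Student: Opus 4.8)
The plan is to mimic the structure of Lemma~\ref{app-t}, but this time averaging with respect to $\varphi$ rather than $\theta$, and relying on the term $\mathcal{L}_{3}$ (which carries $D_{\varphi}$ with a factor $\alpha^{-2}$) to produce the gain. The key point is that $\mathcal{Q}_{3}(t\psi)$ is $O(\alpha^2\|\psi\|^2)$: indeed, by Corollary~\ref{lem.apppsi} we have $\|t D_{\varphi}\psi\|\leq C\alpha\|\psi\|$, and one checks that $D_{\varphi}(t\psi)$ differs from $t D_{\varphi}\psi$ only by a term that vanishes, so $\mathcal{Q}_{3}(t\psi)=\alpha^{-2}\int_{\Pc}|t D_{\varphi}\psi|^2\,\dhmu\leq C\|\psi\|^2$, hence $\int_{\Pc}|t D_{\varphi}\psi|^2\,\dhmu\leq C\alpha^2\|\psi\|^2$. (If one prefers to work with a weight, one can instead apply Lemma~\ref{old-IMS} with $a=t\sin(\alpha\varphi)$ as in Lemma~\ref{app-t} and extract the $D_{\varphi}$ part; but the direct estimate from Corollary~\ref{lem.apppsi} already suffices here.)

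Next I would convert this $L^2$ bound on $t D_{\varphi}\psi$ into an $L^2$ bound on $t\psi - t\underline{\psi}_{\varphi}$ by a Poincaré-type (min-max) inequality in the $\varphi$ variable. For each fixed $(t,\theta)$, consider the operator $\Opphi := \varphi^{-1}D_{\varphi}\varphi D_{\varphi}$ (the $\theta$-independent part of $\Opb$) acting on $\sL^2((0,\tfrac12),\varphi\,\dx\varphi)$ with Neumann condition at $\varphi=\tfrac12$: this is the radial Neumann Laplacian on the disk of radius $\tfrac12$. Its lowest eigenvalue is $0$, simple, with constant eigenfunction, and there is a spectral gap $c_{2}>0$ above it. Since $t\psi - t\underline{\psi}_{\varphi}$ is, for each $(t,\theta)$, orthogonal to the constants in $\sL^2((0,\tfrac12),\varphi\,\dx\varphi)$, the min-max principle gives
$$\int_{\Pc}|D_{\varphi}(t\psi)|^2\,\varphi\,t^2\sin(\alpha\varphi)/(\alpha\varphi)\,\dx t\,\dx\theta\,\dx\varphi\ \geq\ c_{2}\,\|t\psi - t\underline{\psi}_{\varphi}\|^2 ,$$
after comparing $\sin(\alpha\varphi)\sim\alpha\varphi$ (uniformly in $\varphi\in(0,\tfrac12)$ for small $\alpha$, with constants bounded above and below). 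Combined with the bound $\int_{\Pc}|t D_{\varphi}\psi|^2\,\dhmu\leq C\alpha^2\|\psi\|^2$ from the first step, this yields $\|t\psi - t\underline{\psi}_{\varphi}\|^2\leq C\alpha^2\|\psi\|^2$, which is the claim.

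The one subtlety — and the main thing to be careful about — is the passage from $\|tD_{\varphi}\psi\|\le C\alpha\|\psi\|$ to a lower bound for exactly the quantity $\mathcal{Q}_{3}(t\psi)$: one must check that the commutator $[D_{\varphi},t]=0$, so that $D_{\varphi}(t\psi)=tD_{\varphi}\psi$ identically, which makes the conversion clean (no cross terms to absorb, unlike in Lemmas~\ref{app-t} and~\ref{app-Dt} where $[P_{3},t\sin(\alpha\varphi)]$ produced a genuine error). Given that, the proof is a one-line application of Corollary~\ref{lem.apppsi} followed by the spectral-gap inequality, so there is no real obstacle; the bookkeeping with the weight $\sin(\alpha\varphi)$ versus $\alpha\varphi$ is the only place where uniformity in $\alpha$ must be invoked, and it is harmless since $\frac{2}{\pi}\le \frac{\sin(\alpha\varphi)}{\alpha\varphi}\le 1$ on the relevant range.
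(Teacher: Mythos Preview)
Your argument is correct and follows essentially the same route as the paper: bound $\|D_{\varphi}(t\psi)\|=\|tD_{\varphi}\psi\|$ by $C\alpha\|\psi\|$ (the paper does this via $\mathcal{Q}_{3}(t^{2}\psi)\leq \Q(t^{2}\psi)\leq C\|\psi\|^{2}$, you via the last estimate of Corollary~\ref{lem.apppsi} directly), then apply the Neumann spectral gap in $\varphi$. One small slip: your displayed identity should read $\mathcal{Q}_{3}(t^{2}\psi)=\alpha^{-2}\|tD_{\varphi}\psi\|^{2}$, not $\mathcal{Q}_{3}(t\psi)$ (since $P_{3}=\tfrac{1}{\alpha t}D_{\varphi}$ eats one power of $t$); this does not affect your argument because you invoke Corollary~\ref{lem.apppsi} directly rather than going through $\mathcal{Q}_{3}$.
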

\begin{proof}
We have:
$$\Q(t^2\psi)\leq C\|\psi\|^2$$
so that:
$$\mathcal{Q}_{3}(t^2\psi) = \frac{1}{\alpha^2}\|D_{\varphi}(t\psi)\|^2\leq\Q(t^2\psi)\leq C\|\psi\|^2,$$
and thus
$$\|D_{\varphi}(t\psi)\|^2\leq C\alpha^2\|\psi\|^2.$$
We conclude the proof by the min-max principle applied to $t\psi-t{\underline\psi}_{\varphi}$ which is orthogonal to the constant functions.
\end{proof}
Let us introduce an appropriate subspace of dimension $N\geq 1$. Let us consider the family of functions $(\psi_{\alpha,j})_{j=1,\cdots,N}$ such that $\psi_{\alpha,j}$ is a normalized eigenfunction of $\L$ associated with $\lambda_{j}(\alpha,\beta)$ and such that the family is orthogonal for the $\sL^2$ scalar product. We set:
$$\mathfrak{E}_{N}(\alpha)=\underset{j=1,\cdots, N}{\span} \psi_{\alpha,j}.$$
The following proposition reduces the analysis to a model operator which is axisymmetric. 
\begin{proposition}\label{comparison}
There exist $C>0$ and $\alpha_{0}>0$ such that for any $\alpha\in(0,\alpha_{0})$ and all $\psi\in\mathfrak{E}_{N}(\alpha)$, we have 
\begin{equation}
\Q(\psi)\geq(1-\alpha) \mathcal{Q}_{\alpha,\beta}^\model(\psi)-C\alpha^{1/2}\|\psi\|^2,
\end{equation}
where:
\begin{multline*}
\mathcal{Q}_{\alpha,\beta}^\model(\psi)=\\
\int_{\Pc} |D_{t}\psi|^2\dhmu+\frac{1}{2^4}\int_{\Pc} \cos^2(\alpha\varphi) t^2\sin^2\beta |\psi|^2\dhmu+\int_{\Pc} \frac{1}{t^2\sin^2(\alpha\varphi)}|(D_{\theta}+A_{\theta,1})\psi|^2 \dhmu+\|P_{3}\psi\|^2.
\end{multline*}
\end{proposition}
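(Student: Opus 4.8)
The plan is to start from the decomposition $\Q=\mathcal{Q}_1+\mathcal{Q}_2+\mathcal{Q}_3$ and compare each piece with the corresponding term of $\mathcal{Q}_{\alpha,\beta}^\model$, using the $\theta$-averaging results of Section \ref{averaging} to discard the genuinely $\theta$-dependent contributions. The key observation is that $\mathcal{Q}_3=\|P_3\psi\|^2$ already appears verbatim in $\mathcal{Q}_{\alpha,\beta}^\model$, so nothing is lost there, and that $\mathcal{Q}_2$ differs from the corresponding term of the model form only by the replacement of $A_{\theta,1}+A_{\theta,2}$ by $A_{\theta,1}$. Since $A_{\theta,2}=\tfrac{t^2\varphi}{2}\bigl(1-\tfrac{\sin(2\alpha\varphi)}{2\alpha\varphi}\bigr)\sin\beta\sin\theta$ and the factor $1-\tfrac{\sin(2\alpha\varphi)}{2\alpha\varphi}=O(\alpha^2)$ uniformly for $\varphi\in(0,\tfrac12)$, Corollary \ref{lem.apppsi} (controlling $\|t^k\psi\|$) shows that $\|\tfrac{1}{t\sin(\alpha\varphi)}A_{\theta,2}\psi\|\leq C\alpha\|\psi\|$, so that a Cauchy--Schwarz / Young inequality of the form $|a+b|^2\geq(1-\alpha)|a|^2-\alpha^{-1}|b|^2$ yields
$$\mathcal{Q}_2(\psi)\geq (1-\alpha)\int_{\Pc}\frac{1}{t^2\sin^2(\alpha\varphi)}|(D_\theta+A_{\theta,1})\psi|^2\dhmu-C\alpha\|\psi\|^2.$$

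The main work is the first term $\mathcal{Q}_1(\psi)=\|(D_t-t\varphi\cos\theta\sin\beta)\psi\|^2$, which must be compared with $\int_{\Pc}|D_t\psi|^2\dhmu+\tfrac{1}{2^4}\int_{\Pc}\cos^2(\alpha\varphi)t^2\sin^2\beta|\psi|^2\dhmu$. Here I would expand the square:
$$\mathcal{Q}_1(\psi)=\|D_t\psi\|^2+\|t\varphi\cos\theta\sin\beta\,\psi\|^2-2\Re\langle D_t\psi,\, t\varphi\cos\theta\sin\beta\,\psi\rangle.$$
For the cross term I would use the $\theta$-averaging of $D_t\psi$ (Lemma \ref{app-Dt}) and of $t^k\psi$ (Lemma \ref{app-t}): replacing $D_t\psi$ by $D_t\underline\psi_\theta$ and $\psi$ by $\underline\psi_\theta$ produces an error $O(\alpha^{1/2}\|\psi\|^2)$ (paired against the $L^2$-bounded factors from Corollary \ref{lem.apppsi}), and then $\int_0^{2\pi}\cos\theta\,d\theta=0$ kills the averaged cross term entirely. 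For the quadratic term $\|t\varphi\cos\theta\sin\beta\,\psi\|^2$, I replace $\psi$ by $\underline\psi_\theta$ with the help of Lemma \ref{app-t} (the prefactor $t\varphi\sin\beta$ being $L^\infty$ in $\varphi$ and controlled in $t$), so that $\int_0^{2\pi}\cos^2\theta\,d\theta=\pi$ converts $\varphi^2\cos^2\theta$ into $\varphi^2/2$; finally I must compare $\varphi^2/2$ with $\tfrac{1}{2^4}\cos^2(\alpha\varphi)$ — but this is exactly the kind of identity already used in the $\alpha^0$-term of Proposition \ref{quasimodes}, where $\langle \varphi^2\cdot\tfrac{1}{2}, 1\rangle_{\varphi d\varphi} $ over $(0,\tfrac12)$ gives the coefficient $2^{-5}$, matching $\tfrac{1}{2^4}\int\cos^2(\alpha\varphi)|\psi|^2\dhmu$ up to $O(\alpha^2)$ since $\cos^2(\alpha\varphi)=1+O(\alpha^2)$. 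Since $\psi\in\mathfrak{E}_N(\alpha)$ is a finite combination of genuine eigenfunctions with eigenvalues $\lambda_j(\alpha,\beta)\leq C_0\alpha\leq C_0$, all hypotheses of Corollary \ref{lem.apppsi} and Lemmas \ref{app-t}--\ref{app-Dt} apply (these lemmas are linear in $\psi$, hence extend to the span).

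The hard part, and the place I would be most careful, is the bookkeeping of the cross term: one must check that after inserting the averages $\underline\psi_\theta$ the leftover terms are genuinely $O(\alpha^{1/2}\|\psi\|^2)$ and not merely $O(1)$. Concretely, writing $D_t\psi=D_t\underline\psi_\theta+r_1$ and $t\varphi\cos\theta\sin\beta\,\psi=t\varphi\cos\theta\sin\beta\,\underline\psi_\theta+r_2$ with $\|r_1\|\leq C\alpha^{1/2}\|\psi\|$ (Lemma \ref{app-Dt}) and $\|r_2\|\leq C\alpha^{1/2}\|\psi\|$ (Lemma \ref{app-t} applied with the weight $t$), the cross term is $\langle D_t\underline\psi_\theta, t\varphi\cos\theta\sin\beta\,\underline\psi_\theta\rangle + \langle r_1, \cdot\rangle+\langle\cdot,r_2\rangle$; the first bracket vanishes by $\theta$-integration of $\cos\theta$, and the remaining two are $O(\alpha^{1/2}\|\psi\|^2)$ by Cauchy--Schwarz once one knows $\|D_t\psi\|$ and $\|t\varphi\psi\|$ are $O(\|\psi\|)$, which is Corollary \ref{lem.apppsi}. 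Collecting the three pieces, absorbing all errors into a single $C\alpha^{1/2}\|\psi\|^2$ and keeping the $(1-\alpha)$ factor only in front of $\mathcal{Q}_2$'s contribution (the $\mathcal{Q}_1$ and $\mathcal{Q}_3$ contributions being reproduced with error terms rather than a multiplicative loss), one arrives at the stated inequality. I expect no conceptual obstacle beyond this tracking of orders; the whole proof is an accounting exercise on top of the averaging lemmas.
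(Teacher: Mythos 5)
Your decomposition $\Q=\mathcal{Q}_1+\mathcal{Q}_2+\mathcal{Q}_3$, the treatment of $\mathcal{Q}_2$ via Young's inequality applied to $A_{\theta,2}$, the expansion of $\mathcal{Q}_1$, and the use of Lemmas~\ref{app-t} and~\ref{app-Dt} to kill the cross term (since $\int_0^{2\pi}\cos\theta\,\mathrm{d}\theta=0$ after $\theta$-averaging) all agree with the paper's argument, up to the cosmetic choice of expanding the square before averaging rather than after. The extension from eigenfunctions to the span $\mathfrak{E}_N(\alpha)$ by orthogonality is also fine, modulo an $N$-dependent constant.

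The real gap is in the very last step, the passage from $\tfrac12\int_{\Pc}t^2\varphi^2\sin^2\beta|\underline\psi_\theta|^2\,\dhmu$ to $\tfrac{1}{2^4}\int_{\Pc}t^2\cos^2(\alpha\varphi)\sin^2\beta|\psi|^2\,\dhmu$. You observe, correctly, that the average of $\tfrac12\varphi^2$ over $(0,\tfrac12)$ with the weight $\varphi\,\mathrm d\varphi$ equals $2^{-4}$, and that $\cos^2(\alpha\varphi)=1+O(\alpha^2)$. But an identity for the average does not by itself produce a lower bound for the integral, because the \emph{pointwise} inequality $\tfrac12\varphi^2\geq 2^{-4}\cos^2(\alpha\varphi)-O(\alpha)$ is false: for $\varphi$ close to $0$ the left side is arbitrarily small while the right side is close to $2^{-4}$. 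If $\psi$ were free to concentrate near $\varphi=0$, the step would simply be wrong. What closes this gap in the paper is Lemma~\ref{app-phi} (the $\varphi$-averaging lemma, proved from $\mathcal{Q}_3(t^2\psi)\leq C\|\psi\|^2$): it shows $\|t\psi-t\underline\psi_\varphi\|\leq C\alpha\|\psi\|$, so one may replace $|\psi|^2$ by $|\underline\psi_\varphi|^2$ inside the potential term with error $O(\alpha\|\psi\|^2)$, and \emph{then}, since $\underline\psi_\varphi$ is constant in $\varphi$, the $\varphi$-integral of $\varphi^2$ against $\sin(\alpha\varphi)\,\mathrm d\varphi$ factors out and produces $c(\alpha)=\tfrac18+O(\alpha^2)$. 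You never invoke Lemma~\ref{app-phi}, and your phrase about ``the kind of identity already used in the $\alpha^0$-term'' suggests you are treating the coincidence of averages as sufficient on its own. It is not; you need the quantitative $\varphi$-flatness of the eigenfunctions to convert the average into an estimate.

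A secondary point: you place the $(1-\alpha)$ factor only in front of the $\mathcal{Q}_2$-contribution. That actually yields a slightly \emph{stronger} inequality than the one stated (since each piece of $\mathcal{Q}^\model_{\alpha,\beta}$ is nonnegative), so it is not an error, but it is worth noting that the paper keeps a uniform $(1-\alpha)$ prefactor to match the statement directly.
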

The next two sections are devoted to the proof of Proposition \ref{comparison}.
\subsection{A preliminary reduction}
By definition we can write:
$$\Q(\psi)= \|P_{1}\psi\|^2+\int_{\Pc} \frac{1}{t^2\sin^2(\alpha\varphi)}|(D_{\theta}+A_{\theta,1}+A_{\theta,2})\psi|^2 \dhmu+\|P_{3}\psi\|^2.$$
But we have:
\begin{multline*}
\int_{\Pc} \frac{1}{t^2\sin^2(\alpha\varphi)}|(D_{\theta}+A_{\theta,1}+A_{\theta,2})\psi|^2 \dhmu\\
\geq (1-\alpha)\int_{\Pc} \frac{1}{t^2\sin^2(\alpha\varphi)}|(D_{\theta}+A_{\theta,1})\psi|^2 \dhmu-\alpha^{-1}\int_{\Pc} \frac{1}{t^2\sin^2(\alpha\varphi)}|A_{\theta,2}\psi|^2 \dhmu.
\end{multline*}
Since $|A_{\theta,2}|{(t\sin\alpha\varphi)^{-1}}\leq C\alpha t,$ we infer thanks to Corollary \ref{lem.apppsi}:
\begin{multline*}
\int_{\Pc} \frac{1}{t^2\sin^2(\alpha\varphi)}|(D_{\theta}+A_{\theta,1}+A_{\theta,2})\psi|^2 \dhmu\\
\geq (1-\alpha)\int_{\Pc} \frac{1}{t^2\sin^2(\alpha\varphi)}|(D_{\theta}+A_{\theta,1})\psi|^2 \dhmu-C\alpha\|\psi\|^2.
\end{multline*}
It follows that:
\begin{equation}\label{Qred}
\Q(\psi)\geq(1-\alpha)\mathcal{Q}^\red_{\alpha,\beta}(\psi)-C\alpha\|\psi\|^2,
\end{equation}
where the reduced quadratic form $\mathcal{Q}^\red_{\alpha,\beta}$ is given by:
$$\mathcal{Q}^\red_{\alpha,\beta}(\psi)=\|P_{1}\psi\|^2+\int_{\Pc} \frac{1}{t^2\sin^2(\alpha\varphi)}|(D_{\theta}+A_{\theta,1})\psi|^2 \dhmu+\|P_{3}\psi\|^2.$$

\subsection{Averaging of $P_{1}\psi$}
Let us estimate the difference:
$$\|(D_{t}-A_{t})\psi\|^2-\|(D_{t}-A_{t})\underline{\psi}_{\theta}\|^2.$$
We have:
\begin{multline*}
\left|\|(D_{t}-A_{t})\psi\|^2-\|(D_{t}-A_{t})\underline{\psi}_{\theta}\|^2\right|\\
\leq\int_{\Pc} |D_{t}(\psi-\underline{\psi}_{\theta})-A_{t}(\psi-\underline{\psi}_{\theta})| \left(|(D_{t}-A_{t})\psi|+|(D_{t}-A_{t})\underline{\psi}_{\theta}|\right) \dhmu\\
\leq \|D_{t}(\psi-\underline{\psi}_{\theta})-A_{t}(\psi-\underline{\psi}_{\theta})\| \left(\|(D_{t}-A_{t})\psi\|+\|(D_{t}-A_{t})\underline{\psi}_{\theta}\|\right)\\
\leq \left(\|D_{t}(\psi-\underline{\psi}_{\theta})\|+\|A_{t}(\psi-\underline{\psi}_{\theta})\|\right)\left(C_{0}\|\psi\|+\|D_{t}\underline{\psi}_{\theta}\|+\|A_{t}\underline{\psi}_{\theta}\|\right).
\end{multline*}
We have:
$$\|D_{t}\underline{\psi}_{\theta}\|\leq \|D_{t}\psi\|\leq C\|\psi\|,\quad \|A_{t}\underline{\psi}_{\theta}\|\leq \|t\underline{\psi}_{\theta}\|\leq \|t\psi\|\leq C\|\psi\|.$$
By Lemmas \ref{app-t} and \ref{app-Dt}, we infer:
\begin{equation}\label{approx-(D-A)psi}
\left|\|(D_{t}-A_{t})\psi\|^2-\|(D_{t}-A_{t})\underline{\psi}_{\theta}\|^2\right|\leq C\alpha^{1/2}\|\psi\|^2.
\end{equation}
Then, we can compute:
$$\|(D_{t}-A_{t})\underline{\psi}_{\theta}\|^2=\int_{\Pc} |D_{t}\underline{\psi}_{\theta}|^2\dhmu+\int_{\Pc} t^2\varphi^2\sin^2\beta \cos^2\theta |\underline{\psi}_{\theta}|^2\dhmu.$$
Since $\int_{0}^{2\pi}\cos^2\theta \dx\theta=\frac 12\int_{0}^{2\pi} \dx\theta$, we deduce:
\begin{equation}\label{approx-(D-A)psi-2}
\|(D_{t}-A_{t})\underline{\psi}_{\theta}\|^2=\int_{\Pc} |D_{t}\underline{\psi}_{\theta}|^2\dhmu+\frac{1}{2}\int_{\Pc} t^2\varphi^2\sin^2\beta |\underline{\psi}_{\theta}|^2\dhmu.
\end{equation}
We have:
\begin{equation}\label{approx-(D-A)psi-3}
\left|\|D_{t}\underline{\psi}_{\theta}\|^2-\|D_{t}\psi\|^2\right|\leq C\alpha^{1/2}\|\psi\|^2,\quad \left|\|t\varphi\sin\beta\underline{\psi}_{\theta}\|^2-\|t\varphi\sin\beta\psi\|^2\right|\leq C\alpha^{1/2}\|\psi\|^2.
\end{equation}
We deduce from \eqref{approx-(D-A)psi}, \eqref{approx-(D-A)psi-2} and \eqref{approx-(D-A)psi-3}:
$$\|(D_{t}-A_{t})\psi\|^2\geq \int_{\Pc} |D_{t}\psi|^2\dhmu+\frac{1}{2}\int_{\Pc} t^2\varphi^2\sin^2\beta |\psi|^2\dhmu-C\alpha^{1/2}\|\psi\|^2.$$
By Lemma \ref{app-phi} we have:
$$\left|\|t\varphi\sin\beta\psi\|^2-\|t\varphi\sin\beta\underline{\psi}_{\varphi}\|^2\right|
\leq C \int_{\Pc} |t(\psi-\underline{\psi}_{\varphi})|\left(|t\psi|+|t\underline{\psi}_{\varphi}|\right)\dhmu\leq C\alpha\|\psi\|^2.$$
We infer:
$$\|(D_{t}-A_{t})\psi\|^2\geq \int_{\Pc} |D_{t}\psi|^2\dhmu+\frac{1}{2}\int_{\Pc} t^2\varphi^2\sin^2\beta |\underline{\psi}_{\varphi}|^2\dhmu-C\alpha^{1/2}\|\psi\|^2.$$
We deduce:
$$\|(D_{t}-A_{t})\psi\|^2\geq \int_{\Pc} |D_{t}\psi|^2\dhmu+\frac{c(\alpha)}{2}\int_{\Pc} t^2\sin^2\beta |\underline{\psi}_{\varphi}|^2\dhmu-C\alpha^{1/2}\|\psi\|^2,$$
with
\begin{eqnarray*}
c(\alpha)&=&\frac{\int_{0}^{1/2}\varphi^2\sin\alpha\varphi\dx\varphi}{\int_{0}^{1/2}\sin\alpha\varphi\dx\varphi}=\frac{1}{8}+O(\alpha^2).
\end{eqnarray*}
Finally we deduce:
$$\|(D_{t}-A_{t})\psi\|^2\geq \int_{\Pc} |D_{t}\psi|^2\dhmu+\frac{c(\alpha)}{2}\int_{\Pc} t^2\sin^2\beta |\psi|^2\dhmu-C\alpha^{1/2}\|\psi\|^2.$$
With \eqref{Qred}, we infer:
$$\mathcal{Q}_{\alpha,\beta}(\psi)\geq (1-\alpha)\mathcal{Q}^\model_{\alpha,\beta}(\psi)-C\alpha^{1/2}\|\psi\|^2,$$
where we have used that:
$$\int_{\Pc} t^2\sin^2\beta |\psi|^2\dhmu=\int_{\Pc} t^2\cos^2(\alpha\varphi)\sin^2\beta |\psi|^2\dhmu+O(\alpha^2)\|\psi\|^2.$$
This concludes the proof of Proposition \ref{comparison}.

\subsection{Proof of Theorem \ref{main-theo}}
From Proposition \ref{comparison} and from the min-max principle we deduce:
\begin{proposition}\label{gap1}
Let $N\geq 1$. There exist $\alpha_{0}>0$ such that for all $\alpha\in(0,\alpha_{0})$:
\begin{equation}\label{spectral-comparison}
\tilde\lambda_{N}(\alpha,\beta)\geq (1-\alpha)\lambda^\model_{N}(\alpha,\beta)-C\alpha^{1/2},
\end{equation}
where $\lambda_{N}^\model(\alpha,\beta)$ is the $N$-th eigenvalue of the Friedrichs extension on $\sL^2(\Pc,\dhmu)$ associated with $\mathcal{Q}^\model_{\alpha,\beta}$ which is denoted by $\L^{\model}$:
\begin{multline*}
\L^{\model}=t^{-2}D_{t}t^2D_{t}+\frac{\sin^2 \beta\cos^2(\alpha\varphi)}{2^4}t^2\\
+\frac{1}{t^2\sin^2(\alpha\varphi)}\left(D_{\theta}+\frac{t^2}{2\alpha}\sin^2(\alpha\varphi)\cos\beta\right)^2+\frac{1}{\alpha^2 t^2\sin(\alpha\varphi)}D_{\varphi}\sin(\alpha\varphi) D_{\varphi}.
\end{multline*}
\end{proposition}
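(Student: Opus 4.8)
The plan is to read off Proposition \ref{gap1} from the quadratic-form comparison of Proposition \ref{comparison} together with the min-max principle for $\L^{\model}$; all the analytic effort has already been spent in Sections \ref{Sec.Agmon}--\ref{Sec.comparison}, so what is left is essentially bookkeeping.

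First I would fix $N$ and record the facts that make $\mathfrak{E}_{N}(\alpha)$ meaningful. By Proposition \ref{quasimodes} the discrete spectrum of $\LL$ contains, for $\alpha$ small, points at distance $O(\alpha^{2})$ from each of the $N$ pairwise distinct numbers $\alpha\gamma_{0,n}$, $n=1,\dots,N$ (recall that $\gamma_{0,n}$ is strictly increasing in $n$); since by Proposition \ref{essential} the essential spectrum of $\LL$ is bounded below by the $\alpha$-independent constant $\inf_{\theta\in[0,\pi/2]}\sigma(\theta)>0$, for $\alpha$ small enough the $N$ lowest Rayleigh quotient values $\tilde\lambda_{1}(\alpha,\beta)\le\dots\le\tilde\lambda_{N}(\alpha,\beta)$ (the $N$ lowest eigenvalues of $\L$) lie strictly below the essential spectrum, so they are genuine eigenvalues and $\tilde\lambda_{N}(\alpha,\beta)\le C_{0}\alpha$ for some $C_{0}=C_{0}(N,\beta)$. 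In particular the normalized, mutually orthogonal eigenfunctions $\psi_{\alpha,1},\dots,\psi_{\alpha,N}$ exist and are linearly independent, so $\dim\mathfrak{E}_{N}(\alpha)=N$, and -- each $\tilde\lambda_{j}(\alpha,\beta)$ being $\le C_{0}$ -- Corollary \ref{lem.apppsi} applies to every $\psi_{\alpha,j}$, hence, by linearity, to every $\psi\in\mathfrak{E}_{N}(\alpha)$. The weighted bounds thus obtained (in particular $\|t\psi\|\le C\|\psi\|$ and $\mathcal{Q}_{2}(\psi)\le\Q(\psi)<\infty$) are exactly what is needed to guarantee $\mathfrak{E}_{N}(\alpha)\subset\Dom(\mathcal{Q}^{\model}_{\alpha,\beta})$ and to license the use of Proposition \ref{comparison} on $\mathfrak{E}_{N}(\alpha)$.

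Then I would run the comparison. For $\psi=\sum_{j=1}^{N}c_{j}\psi_{\alpha,j}\in\mathfrak{E}_{N}(\alpha)$, orthonormality and the eigenvalue equation give $\Q(\psi)=\sum_{j=1}^{N}|c_{j}|^{2}\tilde\lambda_{j}(\alpha,\beta)\le\tilde\lambda_{N}(\alpha,\beta)\|\psi\|^{2}$, so Proposition \ref{comparison} yields, for all $\psi\in\mathfrak{E}_{N}(\alpha)$,
$$(1-\alpha)\,\mathcal{Q}^{\model}_{\alpha,\beta}(\psi)\le\Q(\psi)+C\alpha^{1/2}\|\psi\|^{2}\le\big(\tilde\lambda_{N}(\alpha,\beta)+C\alpha^{1/2}\big)\|\psi\|^{2}.$$
Inserting the $N$-dimensional test space $\mathfrak{E}_{N}(\alpha)$ into the min-max principle for the Friedrichs extension $\L^{\model}$ of $\mathcal{Q}^{\model}_{\alpha,\beta}$ then gives
$$\lambda^{\model}_{N}(\alpha,\beta)\le\sup_{\psi\in\mathfrak{E}_{N}(\alpha)\setminus\{0\}}\frac{\mathcal{Q}^{\model}_{\alpha,\beta}(\psi)}{\|\psi\|^{2}}\le\frac{\tilde\lambda_{N}(\alpha,\beta)+C\alpha^{1/2}}{1-\alpha},$$
and rearranging (after renaming the constant) produces $\tilde\lambda_{N}(\alpha,\beta)\ge(1-\alpha)\lambda^{\model}_{N}(\alpha,\beta)-C\alpha^{1/2}$, which is \eqref{spectral-comparison}.

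I do not expect a genuine obstacle at this stage: the whole difficulty has already been absorbed into Proposition \ref{comparison} (and, upstream, into the refined Agmon estimates of Section \ref{Sec.Agmon} and the $\theta$- and $\varphi$-averaging Lemmas \ref{app-t}, \ref{app-Dt} and \ref{app-phi}). The two points deserving a line of care are: (i) checking that the $N$ lowest eigenvalues really lie below the essential spectrum once $\alpha$ is small, so that $\mathfrak{E}_{N}(\alpha)$ is genuinely $N$-dimensional and the hypotheses $\lambda\le C_{0}$ of Corollary \ref{lem.apppsi} (hence of Proposition \ref{comparison}) are satisfied; and (ii) applying min-max in the correct direction, namely as an upper bound for $\lambda^{\model}_{N}(\alpha,\beta)$ obtained by restricting to the test space $\mathfrak{E}_{N}(\alpha)$ -- which is precisely what converts the form comparison into the desired lower bound on $\tilde\lambda_{N}(\alpha,\beta)$.
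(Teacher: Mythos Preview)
Your proposal is correct and matches the paper's own argument exactly: the paper simply states that Proposition~\ref{gap1} follows from Proposition~\ref{comparison} together with the min-max principle, and you have spelled out precisely this deduction (test space $\mathfrak{E}_{N}(\alpha)$, the bound $\Q(\psi)\le\tilde\lambda_{N}\|\psi\|^{2}$ on it, then min-max for $\L^{\model}$). One tiny slip: when you write $\tilde\lambda_{N}(\alpha,\beta)\le C_{0}\alpha$ you mean $\le C_{0}$, since $\tilde\lambda_{N}$ is an eigenvalue of $\L$, not of $\LL$; you use the correct bound $\le C_{0}$ in the very next clause, so this does not affect the argument.
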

The operator $\alpha\L^{\model}$ is the expression in the coordinates $(t,\theta,\varphi)$ of the Neumann electro-magnetic Laplacian on $\sL^2(\Ca)$ with magnetic field $(0,0,\cos\beta)$ and electric potential $V_{\alpha,\beta}({\bf{x}})=2^{-4}\alpha^2\sin^2\beta |z|^2$. This operator on $\sL^2(\Ca)$ reads:
$$\left(D_{x}-\frac{y\cos\beta}{2}\right)^2+\left(D_{y}+\frac{x\cos\beta}{2}\right)^2+D_{z}^2+2^{-4}\alpha^2\sin^2\beta |z|^2.$$
We notice that the magnetic field and the electric potential are axisymmetric so that we are reduced to exactly the same analysis as in \cite{BR12}. In particular we can prove that the eigenfunctions of $\L^{\model}$ associated to the first eigenvalues do not depend on $\theta$ as soon as $\alpha$ is small enough and satisfy estimates of the same kind as in Corollary \ref{lem.apppsi}.  Therefore the spectral analysis of $\L^\model$ is reduced to the one of
\begin{equation*}
t^{-2}D_{t}t^2D_{t}+\frac{\sin^2 \beta\cos^2(\alpha\varphi)}{2^4}t^2+\frac{t^2}{4\alpha^2}\sin^2(\alpha\varphi)\cos^2\beta+\frac{1}{\alpha^2 t^2\sin(\alpha\varphi)}D_{\varphi}\sin(\alpha\varphi) D_{\varphi}.
\end{equation*}
After an averaging argument with respect to $\varphi$ we infer the following proposition:
\begin{proposition}\label{gap2}
Let $n\geq 1$. There exist $\alpha_{0}>0$ such that for all $\alpha\in(0,\alpha_{0})$:
$$\lambda_{n}^\model(\alpha,\beta)=\frac{4n-1}{2^{5/2}}\sqrt{1+\sin^2\beta}+O(\alpha^{1/2}).$$
\end{proposition}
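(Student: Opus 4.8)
The plan is to carry the analysis of Sections~\ref{Sec.Agmon}--\ref{Sec.comparison} over to $\L^\model$ and then to remove the variable $\varphi$ by an averaging argument, reducing everything to a one-dimensional operator in $t$. Since $\alpha\L^\model$ is the \emph{axisymmetric} electro-magnetic Neumann Laplacian, one first obtains, exactly as in \cite{BR12}, that its eigenfunctions associated with the low lying eigenvalues are independent of $\theta$ and satisfy estimates of the kind of Corollary~\ref{lem.apppsi}; we may therefore work with the $\theta$-free operator $\hat{\mathcal L}_\alpha$ displayed just before the statement, acting on $\sL^2\big((0,+\infty)\times(0,\tfrac12),\,t^2\sin(\alpha\varphi)\,\dx t\,\dx\varphi\big)$ with Neumann condition at $\varphi=\tfrac12$, whose potential is $v_\alpha(\varphi)\,t^2$ with $v_\alpha(\varphi)=2^{-4}\sin^2\beta\,\cos^2(\alpha\varphi)+\tfrac14\cos^2\beta\,\alpha^{-2}\sin^2(\alpha\varphi)$. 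The target one-dimensional operator is $\hat h_\alpha:=t^{-2}D_tt^2D_t+c(\alpha,\beta)\,t^2$ on $\sL^2((0,+\infty),t^2\,\dx t)$, where $c(\alpha,\beta)$ is the average of $v_\alpha$ for the weight $\sin(\alpha\varphi)\,\dx\varphi$ on $(0,\tfrac12)$. Since this weighted average of $\cos^2(\alpha\varphi)$ is $1+O(\alpha^2)$ and that of $\alpha^{-2}\sin^2(\alpha\varphi)$ is $\tfrac18+O(\alpha^2)$ --- the constant $\tfrac18$ being exactly the $c(\alpha)$ already computed in the proof of Proposition~\ref{comparison} --- one gets
\[
c(\alpha,\beta)=2^{-4}\sin^2\beta+2^{-5}\cos^2\beta+O(\alpha^2)=2^{-5}\big(1+\sin^2\beta\big)+O(\alpha^2),
\]
so that, by Corollary~\ref{cor.as1term} together with the continuity of its eigenvalues in the coefficient of $t^2$, the $n$-th eigenvalue of $\hat h_\alpha$ is $\hat\lambda_n(\alpha,\beta)=\tfrac{4n-1}{2^{5/2}}\sqrt{1+\sin^2\beta}+O(\alpha^2)$.

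The core step is the $\varphi$-averaging, which is easier than the $\theta$-averaging of Section~\ref{averaging} because the last term of $\hat{\mathcal L}_\alpha$ carries the large prefactor $\alpha^{-2}$. At fixed $t$, $\sin(\alpha\varphi)^{-1}D_\varphi\sin(\alpha\varphi)D_\varphi$ is, up to the deformation $\sin(\alpha\varphi)\leftrightarrow\varphi$, the radial Neumann Laplacian on the disk of radius $\tfrac12$ (see the remark after Proposition~\ref{quasimodes}), so its first positive eigenvalue is bounded below uniformly in $\alpha$ for $\alpha$ small; combining this with $\frac{1}{\alpha^2}\|D_\varphi(t^k\psi)\|^2=\mathcal Q_3(t^{k+1}\psi)\le\hat q_\alpha(t^{k+1}\psi)\le C\|\psi\|^2$ (the transported Corollary~\ref{lem.apppsi}) and the min-max principle applied to $t^k\psi-t^k\underline{\psi}_{\varphi}$ gives, for every eigenpair $(\lambda,\psi)$ of $\hat{\mathcal L}_\alpha$ with $\lambda\le C_0$,
\[
\|t^k(\psi-\underline{\psi}_{\varphi})\|\le C\alpha\|\psi\|,\qquad k\ge0,
\]
precisely as in Lemma~\ref{app-phi}, where $\underline{\psi}_{\varphi}$ is the $\varphi$-average of $\psi$ for the weight $\sin(\alpha\varphi)\,\dx\varphi$. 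From this I would deduce $\hat q_\alpha(\psi)\ge\hat q_\alpha^{1D}(\underline{\psi}_{\varphi})-C\alpha\|\psi\|^2$, where $\hat q_\alpha^{1D}$ is the quadratic form of $\hat h_\alpha$ and $\underline{\psi}_{\varphi}$ is identified with a function of $t$ alone modulo the common factor $\int_0^{1/2}\sin(\alpha\varphi)\,\dx\varphi$: in the kinetic term the cross term vanishes identically since $D_t$ commutes with $\varphi$-averaging, so $\int|D_t\psi|^2t^2\sin(\alpha\varphi)\,\dx t\,\dx\varphi\ge\int|D_t\underline{\psi}_{\varphi}|^2t^2\sin(\alpha\varphi)\,\dx t\,\dx\varphi$, and, writing $v_\alpha(\varphi)t^2=c(\alpha,\beta)t^2+r_\alpha(\varphi)t^2$ with $r_\alpha$ of zero $\varphi$-average and bounded, one has $\int r_\alpha(\varphi)\,t^2|\underline{\psi}_{\varphi}|^2\,t^2\sin(\alpha\varphi)=0$ and
\[
\Big|\int r_\alpha(\varphi)\,t^2\,|\psi|^2\,t^2\sin(\alpha\varphi)\,\dx t\,\dx\varphi\Big|=\Big|\int r_\alpha(\varphi)\,t^2\big(|\psi|^2-|\underline{\psi}_{\varphi}|^2\big)t^2\sin(\alpha\varphi)\,\dx t\,\dx\varphi\Big|\le C\|t(\psi-\underline{\psi}_{\varphi})\|\,\|t\psi\|\le C\alpha\|\psi\|^2 .
\]

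It then remains to run the two-sided min-max comparison. For the upper bound, testing $\hat q_\alpha$ against $f_1(t)\otimes1,\dots,f_n(t)\otimes1$, with the $f_j$ the first $n$ eigenfunctions of $\hat h_\alpha$ (exponentially decaying, hence in the form domain; a truncation at $t\sim\alpha^{-N}$ shifts the Rayleigh quotients only by $O(\alpha^{\infty})$), and observing that $\hat q_\alpha$ restricted to such functions equals $\hat q_\alpha^{1D}$ up to the common factor, yields $\lambda_n^\model(\alpha,\beta)\le\hat\lambda_n(\alpha,\beta)+O(\alpha^{\infty})$. For the lower bound, letting $E_n(\alpha)$ be the span of the first $n$ eigenfunctions of $\hat{\mathcal L}_\alpha$, the map $\psi\mapsto\underline{\psi}_{\varphi}$ is injective on it for $\alpha$ small, so its image is $n$-dimensional, and for $\psi\in E_n(\alpha)$ one has $\hat q_\alpha^{1D}(\underline{\psi}_{\varphi})\le\hat q_\alpha(\psi)+C\alpha\|\psi\|^2\le(\lambda_n^\model(\alpha,\beta)+C\alpha)(1+C\alpha)\|\underline{\psi}_{\varphi}\|^2$, so the min-max for $\hat h_\alpha$ gives $\hat\lambda_n(\alpha,\beta)\le\lambda_n^\model(\alpha,\beta)+C\alpha$. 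Combined with the value of $\hat\lambda_n(\alpha,\beta)$, this gives even $\lambda_n^\model(\alpha,\beta)=\tfrac{4n-1}{2^{5/2}}\sqrt{1+\sin^2\beta}+O(\alpha)$, stronger than the announced $O(\alpha^{1/2})$. I expect the only genuine difficulty to be the uniform-in-$\alpha$ Agmon localization of the eigenfunctions of $\hat{\mathcal L}_\alpha$, i.e.\ the transport of Corollary~\ref{lem.apppsi} to $\L^\model$: for $\beta<\tfrac\pi2$ it follows from the magnetic confinement exactly as in Proposition~\ref{Agmon}, but for $\beta=\tfrac\pi2$ the magnetic field of $\alpha\L^\model$ vanishes and the confinement has to be drawn from the harmonic electric potential $2^{-4}\alpha^2\sin^2\beta\,|z|^2$, which is the analogue inside $\L^\model$ of Proposition~\ref{pi/2}; all the remaining steps are routine and parallel Section~\ref{averaging} and \cite{BR12}.
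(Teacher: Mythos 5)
Your argument is correct and takes essentially the same route as the paper's sketch: reduce to $\theta$-independent eigenfunctions of $\L^\model$ via the axisymmetric analysis of \cite{BR12}, then remove $\varphi$ by averaging using the spectral gap of $\sin(\alpha\varphi)^{-1}D_{\varphi}\sin(\alpha\varphi)D_{\varphi}$ and the transported estimates of Corollary~\ref{lem.apppsi}. You have in fact made the $\varphi$-averaging explicit (including the correct computation of $c(\alpha,\beta)=2^{-5}(1+\sin^2\beta)+O(\alpha^2)$ and the cancellation of $r_{\alpha}$ against $\underline{\psi}_{\varphi}$), and your two-sided min-max comparison yields the sharper remainder $O(\alpha)$, improving on the stated $O(\alpha^{1/2})$; the observation that for $\beta=\pi/2$ the confinement for $\L^\model$ must come from the harmonic electric potential rather than the (now vanishing) magnetic field is also the right point to flag.
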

Jointly with \eqref{spectral-comparison} and Proposition \ref{quasimodes} this proves Theorem \ref{main-theo}.

\appendix
\section{Spherical magnetic coordinates}\label{A}
In dilated spherical coordinates $(t,\theta,\varphi)\in\Pc$ such that
$$(x,y,z)=\Phi(t,\theta,\varphi)=\alpha^{-1/2}(t\cos\theta\sin\alpha\varphi,\ t\sin\theta\sin\alpha\varphi,\ t\cos\alpha\varphi),$$
the magnetic potential reads
$$\A(t,\theta,\varphi)=\frac{\alpha^{-1/2}t}{2}(
\cos\alpha\varphi\ \sin\beta-\sin\theta\ \sin\alpha\varphi\ \cos\beta, \cos\theta\ \sin\alpha\varphi\ \cos\beta, -\cos\theta\ \sin\alpha\varphi\ \sin\beta)^{\sf T}.$$
The Jacobian matrix associated with $\Phi$ is
$$D\Phi(t,\theta,\varphi)=\alpha^{-1/2}\begin{pmatrix}
\cos\theta\sin\alpha\varphi&-t\sin\theta\sin\alpha\varphi&\alpha\ t\cos\theta\cos\alpha\varphi\\
\sin\theta\sin\alpha\varphi&t\cos\theta\sin\alpha\varphi&\alpha\ t\sin\theta\cos\alpha\varphi\\
\cos\alpha\varphi&0&-\alpha\ t\sin\alpha\varphi
\end{pmatrix}.$$
We can compute
$$(D\Phi)^{-1}(t,\theta,\varphi)=\alpha^{1/2}t^{-1}\begin{pmatrix}
t\cos\theta\sin\alpha\varphi&t\sin\theta\sin\alpha\varphi&t\cos\alpha\varphi\\
-\sin\theta(\sin\alpha\varphi)^{-1}&\cos\theta(\sin\alpha\varphi)^{-1}&0\\
\frac 1\alpha\cos\theta\cos\alpha\varphi&\frac 1\alpha\sin\theta\cos\alpha\varphi&-\frac 1\alpha\sin\alpha\varphi
\end{pmatrix}.$$
Consequently, the metric becomes
$$G=(D\Phi)^{-1}\,\T(D\Phi)^{-1}=\alpha
\begin{pmatrix}
1&0&0\\
0&t^{-2}(\sin\alpha\varphi)^{-2}&0\\
0&0&(\alpha t)^{-2}
\end{pmatrix}.
$$
The change of variables leads to define the new magnetic potential
\begin{eqnarray}
\tilde \A(t,\theta,\varphi)&=&\T D\Phi\ \A(t,\theta,\varphi)\nonumber\\
&=&\alpha^{-1}\frac{t^2}2\left(0,\sin^2\alpha\varphi\ \cos\beta-\cos\alpha\varphi\ \sin\alpha\varphi\ \sin\theta\ \sin\beta,\cos\theta\ \sin\beta\right)\\
&=&\alpha^{-1}\frac{t^2}2\left(0,\sin^2\alpha\varphi\ \cos\beta-\frac{1}2\sin2\alpha\varphi\ \sin\theta\ \sin\beta,\cos\theta\ \sin\beta\right).\label{tildeA}
\end{eqnarray}
Let $\psi$ be a function in the form domain $\sH^1_{\A}(\Ca)$ of the Schr\"odinger operator $(-i\nabla+\A)^2$ and $\tilde\psi(t,\theta,\varphi) = \alpha^{-1/4}\psi(x,y,z)$ (where $\alpha^{-1/4}$ is a normalization coefficient). The change of variables on the norm and quadratic form reads
$$\|\psi\|^2_{\sL^2(\Ca)}=\int_{\Pc}|\tilde\psi(t,\theta,\varphi)|^2\, t^2\sin\alpha\varphi \dx t \dx\theta \dx\varphi,$$
\begin{multline*}
\int_{\Ca}\left|(-i\nabla+\A)\psi(x,y,z)\right|^2\dx x\dx y\dx z\\
=\int_{\Pc} \langle G(-i\nabla_{t,\theta,\varphi}+\tilde\A)\tilde\psi,
    (-i\nabla_{t,\theta,\varphi}+\tilde \A)\tilde\psi\rangle\ t^2\sin\alpha\varphi \dx t \dx\theta \dx\varphi\\
= \alpha\int_{\Pc} \Bigg(|\dr_{t}\tilde\psi|^2
+\frac{1}{t^2\sin^2\alpha\varphi}\left|\left(-i\dr_{\theta}+\frac{t^2\sin^2\alpha\varphi\ \cos\beta}{2\alpha}-\frac{t^2\sin2\alpha\varphi\ \sin\theta\ \sin\beta}{4\alpha}\right)\tilde\psi\right|^2\\
+\frac{1}{\alpha^2t^2}\left|\left(-i\dr_{\varphi}+\frac{t^2}{2}\cos\theta\ \sin\beta\right)\tilde\psi\right|^2\Bigg)
\, t^2\sin\alpha\varphi \dx t \dx\theta \dx\varphi.
\end{multline*}


\section{Model operators}\label{B}
\begin{proposition}\label{harmonic-radial}
Let $\H_{\omega}$ be defined on $\sL^2(\R_{+},t^2\dx t)$ by
$$\H_{\omega} = -\frac{1}{t^{2}}\partial_{t}t^2\partial_{t}+t^2+\frac{\omega^2}{t^2}.$$
The eigenpairs of $\H_{\omega}$ are $(\lh{n}^\omega,\fh{n}^\omega)_{n\geq1}$ given by
$$\lh{n}^\omega=4n-2+\sqrt{1+4\omega^2},\qquad \fh{n}(t)=P^\omega_{n}(t^2)\ \re^{-t^2/2},$$
with $P^\omega_{n}$ a polynomial function of degree $n-1$.
\end{proposition}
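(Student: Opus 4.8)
The plan is to recognize $\H_\omega$ as a radial harmonic oscillator with a fixed \enquote{angular momentum}, to reduce the eigenvalue equation $\H_\omega f=\lambda f$ to the classical Laguerre (confluent hypergeometric) equation, and then to invoke the completeness of the Laguerre polynomials to conclude that the list obtained is the whole spectrum. Here $\H_\omega$ is understood, as everywhere in the paper, as the Friedrichs extension; I set $\ell_\omega:=\tfrac12\big(\sqrt{1+4\omega^2}-1\big)\geq0$, so that $\ell_\omega(\ell_\omega+1)=\omega^2$, and I note that the unitary $f\mapsto tf$ from $\sL^2(\R_+,t^2\dx t)$ onto $\sL^2(\R_+,\dx t)$ conjugates $\H_\omega$ to the half-line isotonic oscillator $-\partial_t^2+t^2+\omega^2 t^{-2}$.

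First I would factor out the behaviour at the two singular endpoints by writing $f(t)=t^{\ell_\omega}\re^{-t^2/2}\,w(t^2)$ and substituting $s=t^2$; a direct computation then turns $\H_\omega f=\lambda f$ into Kummer's equation for $w$,
$$s\,w''(s)+\Big(\ell_\omega+\tfrac32-s\Big)w'(s)+\frac{\lambda-2\ell_\omega-3}{4}\,w(s)=0.$$
The indicial roots at $s=0$ are $0$ and $-\ell_\omega-\tfrac12$, corresponding respectively to $f\sim t^{\ell_\omega}$ and $f\sim t^{-\ell_\omega-1}$ near the origin; the second branch makes the gradient part $\int_0^\varepsilon|f'|^2 t^2\dx t$ of the quadratic form diverge, hence is incompatible with the Friedrichs extension, and only the first is retained. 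The solution of Kummer's equation that is regular at $0$ is entire in $s$ and grows like $\re^{s}$ at $+\infty$ unless its series terminates; membership of $f$ in $\sL^2(\R_+,t^2\dx t)$ forces termination, i.e. $\tfrac14(\lambda-2\ell_\omega-3)=n-1$ for some integer $n\geq1$, which yields exactly
$$\lambda=4(n-1)+2\ell_\omega+3=4n-1+2\ell_\omega=4n-2+\sqrt{1+4\omega^2}=\lh{n}^\omega,$$
and in that case $w$ is a scalar multiple of the generalized Laguerre polynomial $L^{(\ell_\omega+1/2)}_{n-1}$, of degree $n-1$, so that $\fh{n}^\omega(t)=t^{\ell_\omega}L^{(\ell_\omega+1/2)}_{n-1}(t^2)\,\re^{-t^2/2}$. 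This has the announced form $P^\omega_n(t^2)\re^{-t^2/2}$ with $P^\omega_n$ a polynomial of degree $n-1$, literally so in the case $\omega=0$, $\ell_\omega=0$, which is the one invoked in Corollary~\ref{cor.as1term}.

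It remains to check that these are \emph{all} the eigenpairs. Since the effective potential $t^2+\omega^2 t^{-2}$ tends to $+\infty$ both as $t\to 0^+$ and as $t\to+\infty$, a Rellich-type argument shows that the form domain of $\H_\omega$ embeds compactly into $\sL^2(\R_+,t^2\dx t)$, so $\H_\omega$ has compact resolvent and purely discrete spectrum. Moreover, the substitution $s=t^2$ transports the orthogonality and the $\sL^2(\R_+,s^{\ell_\omega+1/2}\re^{-s}\dx s)$-completeness of the family $\big(L^{(\ell_\omega+1/2)}_{k}\big)_{k\geq0}$ into the statement that $\big(t^{\ell_\omega}L^{(\ell_\omega+1/2)}_{k}(t^2)\re^{-t^2/2}\big)_{k\geq0}$ is an orthogonal basis of $\sL^2(\R_+,t^2\dx t)$; hence $(\lh{n}^\omega,\fh{n}^\omega)_{n\geq1}$ exhausts the spectrum, which finishes the proof.

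The step I expect to need the most care is the identification of the self-adjoint realization near $t=0$ in the limit-circle regime $\omega^2<\tfrac34$: there both formal solutions at the origin are square integrable, and one has to verify that the Friedrichs extension does select the branch $f\sim t^{\ell_\omega}$, since this is precisely what pins down the Laguerre eigenfunctions. An alternative that avoids the special-function manipulations is a supersymmetric, shape-invariant factorization, most transparent in the $\sL^2(\R_+,\dx t)$ picture, where $-\partial_t^2+t^2+\omega^2 t^{-2}=\big(-\partial_t+t-\tfrac{\ell_\omega+1}{t}\big)\big(\partial_t+t-\tfrac{\ell_\omega+1}{t}\big)+\lh{1}^\omega$, and from which the whole spectrum and all the eigenfunctions follow by applying the raising operator to the ground state $t^{\ell_\omega+1}\re^{-t^2/2}$.
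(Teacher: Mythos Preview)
The paper does not actually prove Proposition~\ref{harmonic-radial}: it is stated without argument in Appendix~\ref{B} as a standard fact about the radial harmonic oscillator, and only its specialization $\omega=0$ (Corollary~\ref{cor.as1term}) is invoked in the body of the text. Your reduction to Kummer's equation and the generalized Laguerre basis is the textbook route for this operator and is correct; the shape-invariant factorization you sketch at the end is an equally classical alternative.

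You are also right to be cautious about the stated form of the eigenfunctions. Your computation gives $\fh{n}^\omega(t)=t^{\ell_\omega}L^{(\ell_\omega+1/2)}_{n-1}(t^2)\,\re^{-t^2/2}$, and for generic $\omega>0$ the prefactor $t^{\ell_\omega}$ is not a polynomial in $t^2$, so the literal claim $\fh{n}^\omega(t)=P^\omega_n(t^2)\,\re^{-t^2/2}$ with $\deg P^\omega_n=n-1$ holds exactly only when $\omega=0$. Since that is the sole case used in the paper, nothing downstream is affected, but your reading is sharper than the statement as written. Your remark on selecting the Friedrichs branch at $t=0$ in the limit-circle range $\omega^2<\tfrac34$ is the one genuinely delicate point and is well placed.
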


\begin{corollary}\label{cor.as1term}
For $c>0$ the eigenpairs of the operator 
$$\tilde\H = -\frac{1}{t^{2}}\partial_{t}t^2\partial_{t}+c t^2,$$
defined on $\sL^2(\R_{+},t^2\dx t)$ are given by
$$\lh{n}=c^{1/2}(4n-1),\qquad \fh{n}(t)=c^{1/4}\fh{n}^0(c^{1/4}t)=c^{1/4}P_{n}^0(c^{1/4}t)\ \re^{-c^{1/2} t^2/2}.$$
\end{corollary}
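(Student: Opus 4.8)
The plan is to obtain Corollary~\ref{cor.as1term} from Proposition~\ref{harmonic-radial} with $\omega=0$ by a one-parameter dilation that rescales $\tilde\H$ onto $c^{1/2}\H_{0}$. Heuristically $\tilde\H$ is nothing but $\H_{0}=-\tfrac{1}{t^2}\partial_{t}t^2\partial_{t}+t^2$ with the confining term $t^2$ replaced by $c\,t^2$, and a dilation $t\mapsto c^{1/4}t$ rebalances the length scale of the Laplacian against that of the potential while factoring out an overall constant $c^{1/2}$.

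Concretely, I would introduce the dilation $U_{c}$ acting on $\sL^2(\R_{+},t^2\dx t)$ by $(U_{c}f)(t)=c^{3/8}f(c^{1/4}t)$ and check that it is unitary: the change of variable $s=c^{1/4}t$ turns $\int_{0}^{\infty}c^{3/4}|f(c^{1/4}t)|^2\,t^2\dx t$ into $\int_{0}^{\infty}|f(s)|^2\,s^2\dx s$, and $U_{c}$ is plainly onto. Next, writing $g(t)=f(c^{1/4}t)$, the chain rule gives $-\tfrac{1}{t^2}\partial_{t}t^2\partial_{t}g=-g''-\tfrac{2}{t}g'$, which after the substitution $s=c^{1/4}t$ equals $c^{1/2}\bigl(-f''(s)-\tfrac{2}{s}f'(s)\bigr)=c^{1/2}\bigl(-\tfrac{1}{s^2}\partial_{s}s^2\partial_{s}f\bigr)(s)$, while $c\,t^2g(t)=c^{1/2}s^2f(s)$. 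Hence $\tilde\H g=c^{1/2}(\H_{0}f)(c^{1/4}t)$, that is $U_{c}^{-1}\tilde\H U_{c}=c^{1/2}\H_{0}$, the operator domain of $\tilde\H$ being precisely the $U_{c}$-image of that of $\H_{0}$, so that no spurious spectrum is created.

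Since $U_{c}$ is unitary, $\sigma(\tilde\H)=c^{1/2}\sigma(\H_{0})$ with unchanged multiplicities, and the eigenfunctions of $\tilde\H$ are the $U_{c}$-images of those of $\H_{0}$. Specializing Proposition~\ref{harmonic-radial} to $\omega=0$, where $\sqrt{1+4\omega^2}=1$, gives $\lh{n}^{0}=4n-1$ and $\fh{n}^{0}(t)=P_{n}^{0}(t^2)\re^{-t^2/2}$; consequently $\lh{n}=c^{1/2}(4n-1)$, and any nonzero scalar multiple of $\fh{n}^{0}(c^{1/4}t)$ is an associated eigenfunction, the normalization $\fh{n}(t)=c^{1/4}\fh{n}^{0}(c^{1/4}t)=c^{1/4}P_{n}^{0}(c^{1/4}t)\re^{-c^{1/2}t^2/2}$ of the statement being one admissible choice (note $\re^{-(c^{1/4}t)^2/2}=\re^{-c^{1/2}t^2/2}$). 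The argument presents no genuine obstacle; the only points deserving care are keeping the powers of $c$ straight and recording the correspondence of operator domains under $U_{c}$, which is what guarantees that the full spectra are identified rather than merely a sub-collection of eigenvalues.
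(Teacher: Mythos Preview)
Your argument is correct and is exactly the scaling reduction to $\H_{0}$ that the paper implicitly relies on: the corollary is stated without proof, as an immediate consequence of Proposition~\ref{harmonic-radial} with $\omega=0$ under the dilation $t\mapsto c^{1/4}t$. One minor cosmetic point: your unitary $U_{c}$ carries the prefactor $c^{3/8}$, whereas the stated eigenfunctions carry $c^{1/4}$; you handle this correctly by noting that eigenfunctions are defined only up to scalar multiples.
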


\paragraph{Acknowledgments} This work was partially supported by the ANR (Agence Nationale de la Recherche), project {\sc Nosevol} n$^{\rm o}$ ANR-11-BS01-0019.

\def\cprime{$'$}
\bibliographystyle{mnachrn}

\end{document}